\newtheorem*{rep@theorem}{\rep@title}
\newcommand{\newreptheorem}[2]{
\newenvironment{rep#1}[1]{
 \def\rep@title{#2 \ref{##1}}
 \begin{rep@theorem}}
 {\end{rep@theorem}}}
\theoremstyle{plain}
\newtheorem{thm}{Theorem}[section]
\newtheorem{lem}[thm]{Lemma}
\newtheorem{conjecture}[thm]{Conjecture}
\newtheorem*{SmallPotLemma}{Small Pot Lemma}
\theoremstyle{definition}
\newtheorem{defn}{Definition}
\theoremstyle{remark}
\newtheorem*{problem}{Problem}
\newcommand{\fancy}[1]{\mathcal{#1}}
\newcommand{\IN}{\mathbb{N}}
\newcommand{\CC}{\fancy{C}}
\newcommand{\D}{\fancy{D}}
\newcommand{\surj}{\twoheadrightarrow}
\newcommand{\set}[1]{\left\{ #1 \right\}}
\newcommand{\setb}[3]{\left\{ #1 \in #2 \mid #3 \right\}}
\newcommand{\setbs}[2]{\left\{ #1 \mid #2 \right\}}
\newcommand{\card}[1]{\left|#1\right|}
\newcommand{\size}[1]{\left\Vert#1\right\Vert}
\newcommand{\ceil}[1]{\left\lceil#1\right\rceil}
\newcommand{\func}[3]{#1\colon #2 \rightarrow #3}
\newcommand{\funcsurj}[3]{#1\colon #2 \surj #3}
\newcommand{\irange}[1]{\left[#1\right]}
\newcommand{\join}[2]{#1 \mbox{\hspace{2 pt}$\ast$\hspace{2 pt}} #2}
\newcommand{\parens}[1]{\left( #1 \right)}
\newcommand{\brackets}[1]{\left[ #1 \right]}
\newcommand{\DefinedAs}{\mathrel{\mathop:}=}
\def\D{\fancy{D}}
\title{Coloring graphs with dense neighborhoods}
\author{Landon Rabern}
\date{\today}
\begin{document}
\maketitle

\begin{abstract}
It is shown that any graph with maximum degree $\Delta$ in which the average degree of the induced subgraph on the set of all neighbors of any vertex exceeds $\frac{6k^2}{6k^2 + 1}\Delta + k + 6$ is either $(\Delta - k)$-colorable or contains a clique on more than $\Delta - 2k$ vertices.  In the $k=1$ case we improve the bound on the average degree to $\frac23\Delta + 4$ and the bound on the clique number to $\Delta-1$.  As corollaries, we show that every graph satisfies $\chi \leq \max\set{\omega, \Delta - 1, 4\alpha}$ and every graph satisfies $\chi \leq \max\set{\omega, \Delta - 1, \ceil{\frac{15 + \sqrt{48n + 73}}{4}}}$.
\end{abstract}

\section{Introduction}
Using ideas developed for strong coloring by Haxell \cite{haxell2004strong} and by Aharoni, Berger and Ziv \cite{aharoni2007independent}, we make explicit a recoloring technique and apply it to coloring graphs with dense neighborhoods.  The \emph{average degree} of a graph $G$ is $d(G) \DefinedAs \frac{2\size{G}}{\card{G}}$. For a vertex $v$ in a graph $G$, put $G_v \DefinedAs G\brackets{N(v)}$.  Reed \cite{reed1998omega} has conjectured that every graph satisfies

\[\chi \leq \ceil{\frac{\omega + \Delta + 1}{2}}.\]

Our first result implies this conjecture without the round-up for graphs where every vertex is in a big clique.

\begin{repthm}{MainSimpleCorollary}
Let $k \geq 1$. Every graph $G$ with $\omega(G) \leq \Delta(G) - 2k$ such that every vertex is in a clique on $\frac{2k}{2k+1}\Delta(G) + 2k + 1$ vertices is $(\Delta(G)-k)$-colorable.
\end{repthm}

Using probabilistic methods, Reed \cite{reed1998omega} proved a similar-looking result that is much better for very large $k$ and $\Delta$.  For comparison, we modify the statement to look as close to Theorem \ref{MainSimpleCorollary} as possible.

\begin{thm}[Reed \cite{reed1998omega}]\label{ReedOmega}
There exists $\Delta_0$ such that for $k \geq 0$ every graph $G$ with $\Delta(G) \geq \Delta_0$ and 
$\frac{69999999}{70000000}\Delta(G) \leq \omega(G) \leq \Delta(G) - 2k$ is $(\Delta(G)-k)$-colorable.
\end{thm}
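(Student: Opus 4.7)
The plan is to follow a probabilistic approach exploiting that $\omega(G)$ is very close to $\Delta(G)$. Writing $\epsilon = 1/70000000$, the hypothesis becomes $\omega(G) \geq (1-\epsilon)\Delta(G)$, so the graph is ``almost a disjoint union of cliques'' of size approximately $\Delta$. The strategy is to find $k+1$ disjoint independent sets $I_0, \ldots, I_k$, each meeting every maximum clique, use them as color classes, and handle the residue $G - \bigcup_j I_j$ by a Brooks-type or greedy argument to obtain a $(\Delta - k)$-coloring in total.

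The first step is a deterministic structural decomposition. I would call a vertex \emph{dense} if it lies in a clique of size at least $(1 - 2\epsilon)\Delta$, and show that the set $D$ of dense vertices partitions into ``almost-cliques'' $B_1, \ldots, B_r$, where each $B_i$ has size at least $(1-2\epsilon)\Delta$, every dense vertex has nearly all of its neighbors inside its own block, and cross-edges between distinct $B_i$'s are rare. The key combinatorial input is a degree-count in the neighborhood of a dense vertex: such a neighborhood has at most $O(\epsilon\Delta)$ non-edges incident to its big clique, which controls how distinct $\omega$-cliques can overlap and forces them either to coincide on most vertices or to be essentially disjoint. The few sparse vertices in $V(G)\setminus D$ contribute only $O(\epsilon\Delta)$ edges into any single $B_i$.

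The probabilistic heart is the Lov\'asz Local Lemma applied iteratively. At step $j$, working with $G^{(j)} = G - (I_0 \cup \cdots \cup I_{j-1})$ and its shrunken almost-cliques $B_i^{(j)}$, select a uniformly random vertex from each $B_i^{(j)}$ to form a candidate for $I_j$. The bad events are (i) a chosen pair from two different almost-cliques is adjacent, bounded using the rarity of cross-edges, and (ii) a specific vertex accumulates too many chosen neighbors (or too few, preventing the needed degree reduction), controlled by concentration since each vertex has few neighbors in any single $B_i^{(j)}$ outside its own. Each event depends on $O(\Delta)$ others, so the LLL condition $4pd \leq 1$ is satisfied once $\Delta \geq \Delta_0(\epsilon)$; the specific value $\epsilon = 1/70000000$ emerges from balancing the cross-edge density bound against the concentration slack needed to sustain the $k$-fold iteration.

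After $k+1$ successful iterations, the residue graph $G - \bigcup_j I_j$ has clique number at most $\omega(G) - (k+1)$ and a correspondingly reduced maximum degree; a Brooks-type extension combined with the remaining almost-clique structure then colors it with at most $\Delta - 2k - 1$ additional colors, combining with the $k+1$ transversal colors for the desired bound $\Delta - k$. The main obstacle is sustaining the probabilistic argument through $k+1$ iterations: the almost-clique structure may degrade as vertices are deleted, and the LLL estimates for both pair-adjacency and degree concentration must remain valid at every step, with the parameters $\epsilon$ and the iteration count $k$ interacting through the LLL inequality. The tight constant $1 - \epsilon = 69999999/70000000$ reflects this simultaneous balancing and leaves essentially no slack in the probabilistic bounds; loosening it would require a fundamentally different decomposition.
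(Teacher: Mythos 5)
First, a point of calibration: the paper does not prove this statement at all. It is quoted from Reed's paper for comparison with Theorem \ref{MainSimpleCorollary}, so the benchmark is Reed's own probabilistic proof, which is a long argument combining an almost-clique decomposition with a random coloring analyzed via the Lov\'asz Local Lemma and concentration inequalities. Your sketch is in that general spirit, but it is a plan with admitted open obstacles rather than a proof, and it contains one concrete step that fails as stated.

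The failure is in the final accounting. You remove $k+1$ independent transversals $I_0,\ldots,I_k$, each taking one vertex from each almost-clique $B_i$, and then claim the residue can be colored with $\Delta-2k-1$ further colors ``by a Brooks-type extension,'' for a total of $\Delta-k$. But a dense vertex $v$ has almost all of its $\approx\Delta$ neighbors inside its own block $B_{i(v)}$, and each round deletes only one vertex of $B_{i(v)}$; so after $k+1$ rounds $v$ has lost only about $k+1$ neighbors, and the residue has maximum degree roughly $\Delta-(k+1)$. Greedy or Brooks then gives about $\Delta-k$ colors for the residue alone, not $\Delta-2k-1$; the fact that the residue's clique number has dropped to $\omega-(k+1)$ does not help, since Brooks bounds $\chi$ by maximum degree, and bounding $\chi$ by something near $\omega$ when $\omega<\Delta$ is exactly the theorem being proved. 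The real difficulty in this regime is that a vertex of degree $\Delta$ can only be colored from $\Delta-k$ colors if at least $k+1$ color classes each meet its neighborhood twice, i.e.\ one needs color classes containing two nonadjacent vertices \emph{inside} a single dense set (this is also the mechanism behind the recoloring technique of Section \ref{recolorsection} and behind Reed's proof). A transversal that picks one vertex per almost-clique produces no such repetition for vertices whose neighborhood is essentially one clique, so iterating it cannot yield the required savings, no matter how the LLL parameters are tuned. Fixing this would require redesigning the random step to place matched nonadjacent pairs within dense sets and to treat sparse vertices by a separate concentration argument, which is essentially Reed's actual proof and is substantially more work than the sketch suggests.
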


This implies Theorem \ref{MainSimpleCorollary} when $k$ gets to be larger than around $35$ million.  In fact, Reed states that with some care the constant can be brought down to $\frac{9999}{10000}$ and so really his method starts implying Theorem \ref{MainSimpleCorollary} when $k$ gets larger than $5000$.  Moreover, Theorem \ref{ReedOmega} just needs a large clique while Theorem \ref{MainSimpleCorollary} requires every vertex to be in a large clique.
 
It turns out that if every neighborhood has many edges, it is guaranteed that every vertex is in a large clique.  This implies the following.

\begin{repthm}{MainResult}
Let $k \geq 0$. Every graph $G$ with $\omega(G) \leq \Delta(G) - 2k$ such that $d(G_v) \geq \frac{6k^2}{6k^2 + 1}\Delta(G) + k + 6$ for each $v \in V(G)$ is $(\Delta(G)-k)$-colorable.
\end{repthm}

To tighten these results up, further development of the theory of $f$-choosability where $f(v) = d(v) - k$ for $k \geq 2$ is needed.  For $k=1$ this theory was developed in \cite{mules} and using it in the case of $(\Delta-1)$ coloring, we achieve tighter bounds which have bearing on the conjecture of Borodin and Kostochka \cite{borodin1977upper}.

\begin{conjecture}[Borodin and Kostochka \cite{borodin1977upper}]\label{BK}
Every graph with $\chi \geq \Delta \geq 9$ contains $K_\Delta$.
\end{conjecture}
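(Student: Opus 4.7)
Conjecture \ref{BK} is an open problem from 1977; what follows is the line of attack this paper's machinery naturally suggests, not a complete proof. I would argue by contradiction, taking a minimum counterexample $G$ with $\Delta(G) \geq 9$, $\omega(G) \leq \Delta(G) - 1$, and $\chi(G) \geq \Delta(G)$, and attempt to produce a $(\Delta(G)-1)$-coloring of $G$.

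The first move is to invoke the $k = 1$ strengthening of Theorem \ref{MainResult} promised in the abstract, which replaces the generic bound by $d(G_v) \geq \frac{2}{3}\Delta(G) + 4$ and relaxes the clique bound to $\omega(G) \leq \Delta(G) - 1$. Any vertex $v$ with $d(G_v) \geq \frac{2}{3}\Delta(G) + 4$ would, via that theorem, either give a $(\Delta(G)-1)$-coloring directly or force a clique violating $\omega(G) \leq \Delta(G) - 1$. So in the counterexample every vertex must sit in a sparse neighborhood, that is, $d(G_v) < \frac{2}{3}\Delta(G) + 4$ for every $v \in V(G)$.

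In this remaining sparse regime I would deploy the $f$-choosability theory from \cite{mules} with $f(v) = d(v) - 1$. The idea is to show that for each vertex $v$, a carefully chosen list assignment on $N(v)$ is realizable, which frees a color at $v$ and extends a $(\Delta(G)-1)$-coloring from $G - v$. The Haxell/Aharoni--Berger--Ziv recoloring framework that drives this paper should combine with local sparsity inside $G_v$ to push any color conflicts out of $N(v)$; the sparsity quantitatively matches the regime in which the $f$-choosability reductions are designed to operate.

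The main obstacle, and essentially the reason Conjecture \ref{BK} is still open, is the intermediate density range: when $d(G_v)$ sits just below $\frac{2}{3}\Delta(G) + 4$, $N(v)$ carries too many edges for the $d_0$-choosability reductions of \cite{mules} to deliver a usable partial list-coloring, yet not enough for Theorem \ref{MainResult} to force a clique. Closing this gap is the real content of the conjecture, and I expect any attempt to stall there. A secondary difficulty, concentrated at the boundary $\Delta(G) \in \{9, 10\}$, is that additive constants like the ``$+4$'' become binding, so the argument would have to be supplemented by ad hoc small-case analysis at the low end.
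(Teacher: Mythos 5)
You have correctly recognized that this statement is a \emph{conjecture}, not a theorem, and that the paper does not prove it; the paper only establishes partial results (Theorems \ref{TwoThirdsCliqueCor}, \ref{BKdense}, \ref{AlphaBound}, \ref{OrderBound}) in its direction. So there is no ``paper proof'' against which to grade your attempt. Your account of where the paper's machinery applies and where it stalls is essentially accurate: a minimum counterexample with $\Delta \geq 9$ and $\omega \leq \Delta - 1$ must, by Theorem \ref{BKdense}, contain a vertex $v$ with $d(G_v) < \frac23\Delta + 4$, and the $d_1$-choosability toolkit of \cite{mules} does not currently convert such a sparse-but-not-too-sparse neighborhood into a color saved at $v$.

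Two small calibrations. First, the paper's actual route from density to clique structure runs through Lemma \ref{MaderLemma} (Mader) and Lemmas \ref{LowVertexHighAverageDegree}--\ref{VertexHighAverageDegree}: high average degree in $G_v$ forces a $2$-connected subgraph of high average degree, which either is complete (yielding a large clique) or is $d_0$-choosable and then $\join{K_1}{H}$ becomes $d_1$-choosable, contradicting criticality. Your sketch gestures at ``$f$-choosability with $f(v) = d(v) - 1$'' but the precise engine is this Mader $\to$ $d_0$-choosable $\to$ $\join{K_1}{H}$ chain; when $d(G_v)$ drops below $\omega(G_v) + 3$ the chain produces nothing. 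Second, the difficulty is not confined to $\Delta \in \{9, 10\}$: the conjecture is open for every fixed $\Delta \geq 9$ below Reed's (enormous) threshold, and the ``$+4$'' is a limitation of current $d_1$-choosability classification rather than a boundary effect at small $\Delta$. Otherwise your assessment of the obstruction --- the intermediate-density regime --- matches the paper's implicit stance.
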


Also using probabilistic methods, Reed \cite{reed1999strengthening} has proved this conjecture for very large $\Delta$.  Using $d_1$-choosability theory, we prove the following.

\begin{repthm}{TwoThirdsCliqueCor}
Every graph with $\chi \geq \Delta \geq 9$ such that every
vertex is in a clique on $\frac23\Delta + 2$ vertices contains $K_\Delta$.
\end{repthm}

From this it follows that it would be enough to prove the Borodin-Kostochka conjecture for irregular graphs.

\begin{repthm}{IrregularReduction}
Every graph satisfying $\chi \geq \Delta = k \geq 9$ either
contains $K_k$ or contains an irregular critical subgraph satisfying $\chi
= \Delta = k - 1$.
\end{repthm}

We also get a neighborhood density version.

\begin{repthm}{BKdense}
Every graph $G$ with $\omega(G) < \Delta(G)$ such that $d(G_v) \geq \frac23\Delta(G) + 4$ for each $v \in V(G)$ is $(\Delta(G)-1)$-colorable.
\end{repthm}

Finally, we use these ideas to prove the following bounds on the chromatic number.  The first generalizes the result of Beutelspacher and Hering \cite{beutelspacher1984minimal} that the Borodin-Kostochka conjecture holds for graphs with independence number at most two.  This result was generalized in another direction in \cite{cranstonrabernclaw} where the conjecture was proved for claw-free graphs.

\begin{repthm}{AlphaBound}
Every graph satisfies $\chi \leq \max\set{\omega, \Delta - 1, 4\alpha}$.
\end{repthm}

The second bound shows that the Borodin-Kostochka conjecture holds for graphs with maximum degree on the order of the square root of their order.  This improves on prior bounds of $\Delta > \frac{n + 1}{2}$ from Beutelspacher and Hering \cite{beutelspacher1984minimal} and $\Delta > \frac{n-6}{3}$ of Naserasr \cite{naserasr}.

\begin{repthm}{OrderBound}
Every graph satisfies $\chi \leq \max\set{\omega, \Delta - 1, \ceil{\frac{15 + \sqrt{48n + 73}}{4}}}$.
\end{repthm}

\section{Strong coloring}
For a positive integer $r$, a graph $G$ with $\card{G} = rk$ is called \emph{strongly $r$-colorable} if for every partition of $V(G)$ into parts of size $r$ there is a proper coloring of $G$ that uses all $r$ colors on each part.  If $\card{G}$ is not a multiple of $r$, then $G$ is strongly $r$-colorable iff the graph formed by adding $r\ceil{\frac{|G|}{r}} - |G|$ isolated vertices to $G$ is strongly $r$-colorable.  The \emph{strong chromatic number} $s\chi(G)$ is the smallest $r$ for which $G$ is strongly $r$-colorable.

Note that a strong $r$-coloring of $G$ with respect to a partition $V_1, \ldots, V_k$ of $V(G)$ with $\card{V_i} = r$ must partition $V(G)$ into $r$ independent transversals of $V_1, \ldots, V_k$. In \cite{szabo2006extremal}, Szab{\'o} and Tardos constructed partitioned graphs with part sizes $2\Delta - 1$ that have no independent transversal.  So we must have $s\chi(G) \geq 2\Delta(G)$.  It is conjectured that this bound is tight.

Haxell \cite{haxell2004strong} proved that $s\chi(G) \leq 3\Delta(G) - 1$.  Aharoni, Berger and Ziv \cite{aharoni2007independent} gave a simple proof that $s\chi(G) \leq 3\Delta(G)$.  It is this latter proof whose recoloring technique we use.  First we need a lemma allowing us to pick an independent transversal when one of the sets has only one element.

\begin{lem}\label{SingletonSetTransversal}
Let $H$ be a graph and $V_1 \cup \cdots \cup V_r$ a partition of $V(H)$. 
Suppose that $\card{V_i} \geq 2\Delta(H)$ for each $i \in \irange{r}$.  If a
graph $G$ is formed by attaching a new vertex $x$ to fewer than $2\Delta(H)$
vertices of $H$,  then $G$ has an independent set $\set{x, v_1, \ldots, v_r}$
where $v_i \in V_i$ for each $i \in \irange{r}$.
\end{lem}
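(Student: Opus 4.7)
The plan is to adapt the recoloring technique of Aharoni, Berger and Ziv (and of Haxell) for strong coloring to our setting. The key reformulation is that the conclusion is equivalent to finding an independent transversal $\set{v_1, \ldots, v_r}$ of $V_1, \ldots, V_r$ in $H$ that is disjoint from $N_G(x) \cap V(H)$: for such a transversal, $\set{x, v_1, \ldots, v_r}$ is the desired independent set of $G$.

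Suppose for contradiction that no such transversal exists. Among all partial independent transversals $T$ of $V_1, \ldots, V_r$ in $H$ disjoint from $N_G(x)$, take one with the maximum number of saturated parts, and, subject to this, minimizing a potential such as $\sum_{j} \card{N_H(T) \cap V_j}$, where $j$ ranges over indices of unsaturated parts. If $T$ saturates every part we are done, so suppose some $V_j$ is unsaturated. Every $v \in V_j \setminus N_G(x)$ has a neighbor in $T$ (else $T \cup \set{v}$ would contradict maximality), and since $|V_j| \geq 2\Delta(H) > |N_G(x)|$, at least one such $v$ exists. For each blocker $t \in T$ of $v$, I would attempt to replace $t$ by a suitable $t' \in V_i$ (where $V_i$ is the part containing $t$) with $t' \notin N_G(x)$ and $t'$ non-adjacent to $T \setminus \set{t}$; the degree bound $\Delta(H)$ together with the part-size bound $|V_i| \geq 2\Delta(H)$ should guarantee the existence of such a $t'$, unless a chain of blocked swaps yields a contradiction with either maximality of $T$ or minimality of the potential.

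The main obstacle will be executing the swap chain correctly while maintaining the invariant $T \cap N_G(x) = \emptyset$ throughout, since the standard Haxell argument only manages the degree budget inside $H$ itself. The critical accounting trick is to treat $x$ as an implicit extra member of the partial transversal, residing in a ``virtual'' part $V_0$ of size $2\Delta(H)$ obtained by padding $\set{x}$ with $2\Delta(H) - 1$ isolated vertices; then the bound $|N_G(x)| < 2\Delta(H)$ matches the part-size lower bound exactly, so Haxell's swap cycle should close with no room to spare.
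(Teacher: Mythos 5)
Your high-level instinct is right — this lemma is proved with a Haxell-style swap argument — but the specific mechanism you propose for handling $x$ does not close, and you yourself flag this as ``the main obstacle.'' The padding trick has two problems. First, after you add $2\Delta(H)-1$ isolated vertices to form $V_0$, the maximum degree of the augmented graph is $\max\set{\Delta(H)+1,\, |N_G(x)|}$, which can be as large as $2\Delta(H)-1$; so the parts of size $2\Delta(H)$ no longer satisfy the $|V_i|\geq 2\Delta$ hypothesis of Haxell's theorem for the augmented graph, and the swap cycle does not ``close with no room to spare'' — it doesn't close at all. Second, even if Haxell's theorem did apply, it would yield \emph{some} independent transversal of $V_0, V_1, \ldots, V_r$, not necessarily one that selects $x$ from $V_0$; the isolated padding vertices are always admissible choices, so you have no control over whether $x$ gets picked.

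The paper avoids both difficulties by deleting $\set{x}\cup N(x)$ from $G$ to form $H'$ with induced parts $V_i' = V_i \setminus N(x)$, each nonempty (possibly a single vertex), and then invoking the structural Lemma \ref{BaseTransversalLemma} rather than Haxell's theorem as a black box. The point is that Lemma \ref{BaseTransversalLemma} does not require a uniform part-size lower bound; it outputs a set $J$ of parts and an induced matching $M$ with $|M|=|J|-1$ whose ends must totally dominate $\bigcup_{i\in J} V_i'$. Since $\bigl|\bigcup_{i\in J} V_i'\bigr| > 2\Delta(H)|J| - 2\Delta(H) = 2(|J|-1)\Delta(H)$ (subtracting all of $N(x)$ at once, not part by part), while the $2(|J|-1)$ matched vertices each have degree at most $\Delta(H)$ and so dominate at most $2(|J|-1)\Delta(H)$ vertices, the domination requirement fails. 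That single global count is what replaces, and repairs, the padding idea. Your sketch could in principle be completed by treating $x$ as a fixed anchor never entered into the swap chain — which is morally the same as deleting $N(x)$ upfront — but as written the ``virtual part'' accounting is not a valid substitute for that step.
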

\begin{proof}
Suppose not. Remove $\set{x} \cup N(x)$ from $G$ to form $H'$ with induced
partition $V_1', V_2', \ldots, V_r'$. Then $V_1', V_2', \ldots, V_r'$ has no
independent transversal since we could combine one with $x$ to get our desired
independent set in $G$. Note that $\card{V_i'} \geq 1$. 
Create a graph $Q$ by removing edges from $H'$ until it is edge minimal without
an independent transversal. Pick $yz \in E(Q)$ and apply Lemma
\ref{BaseTransversalLemma} on $yz$ with the induced partition to get the guaranteed 
$J \subseteq \irange{r}$ and the totally dominating induced matching 
$M$ with $\card{M} = \card{J} - 1$. 
Now $\card{\bigcup_{i \in J} V_i'} > 2\Delta(H)\card{J} - 2\Delta(H) =
2(\card{J} - 1)\Delta(H)$ and hence $M$ cannot dominate, a contradiction.
\end{proof}

\begin{thm}\label{StrongColorBound}
Every graph satisfies $s\chi \leq 3\Delta$.
\end{thm}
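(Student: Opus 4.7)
The plan is to prove $s\chi(G) \leq 3\Delta(G)$ by fixing a balanced partition $V_1, \ldots, V_k$ of $V(G)$ with each $|V_i| = 3\Delta(G)$ (padding with isolated vertices if necessary) and producing $3\Delta$ pairwise disjoint independent transversals of this partition. Exhibiting such a family of transversals is equivalent to a strong $3\Delta$-coloring.

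I would follow the Aharoni--Berger--Ziv recoloring strategy alluded to by the author. Start with an arbitrary balanced coloring $c \colon V(G) \to \irange{3\Delta}$ that restricts to a bijection on each $V_i$ (which trivially exists), and introduce the potential $\Phi(c)$ equal to the number of monochromatic edges. If $\Phi(c) = 0$ then $c$ is already a proper strong coloring and we are done; otherwise pick a monochromatic edge $uv$ with $u \in V_i$ and look for a \emph{swap}---a pair $u, x \in V_i$ whose colors are exchanged---that strictly decreases $\Phi$. Lemma \ref{SingletonSetTransversal} is exactly designed to deliver such a swap: regard the other parts $V_j$ ($j \neq i$) together with their current colors as the backdrop, and view $u$ as a fresh singleton that we need to place consistently. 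Since $\deg_G(u) \leq \Delta < 2\Delta$, the lemma's degree hypothesis on the new vertex holds automatically, and the lemma produces an independent transversal of the other parts that is compatible with $u$, yielding the desired swap.

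The main obstacle is arranging matters so that the size hypothesis of Lemma \ref{SingletonSetTransversal} (parts of size $\geq 2\Delta(H)$) is in force throughout. Naively the color classes are singletons, so one cannot apply the lemma to them directly; the right move is to pass to the auxiliary graph whose parts are color classes rather than the original $V_j$, so each ``part'' has $k$ vertices rather than one, and to verify that the effective maximum degree in this auxiliary graph is small enough to satisfy $|V_i| \geq 2\Delta(H)$. Once this setup is in place, Lemma \ref{SingletonSetTransversal} guarantees a swap that strictly decreases $\Phi$, and since $\Phi$ is a nonnegative integer the process terminates at a proper strong $3\Delta$-coloring. The technical heart of the proof is thus the careful bookkeeping needed to frame the local conflict-resolution step as an instance of Lemma \ref{SingletonSetTransversal}.
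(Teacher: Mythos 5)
Your high-level plan---start with a balanced partial coloring and improve it via an application of Lemma \ref{SingletonSetTransversal}---is indeed the right spirit, but the proposal has the key construction backwards, and the last paragraph heads in a direction that does not work.

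The crucial object you never define is the \emph{restricted part} $W_i$. The paper's recoloring is not a swap of two vertices in a single $V_i$; it is a simultaneous exchange across \emph{all} parts. Fix the color $1$ and let $z_i$ be the unique vertex of $V_i$ with $\pi(z_i)=1$. Define
\[
W_i \DefinedAs V_i \setminus \setb{v}{V_i}{\pi(v)=\pi(w)\text{ for some }w\in N(z_i)},
\]
the vertices of $V_i$ whose color is absent from $N(z_i)$. Since $z_i$ has at most $\Delta$ neighbors and $\pi$ is a bijection on $V_i$, at most $\Delta$ vertices are deleted, so $\card{W_i}\geq 2\Delta$. Lemma \ref{SingletonSetTransversal} is then invoked with parts $\set{x}, W_2,\ldots,W_k$ (subsets of the original $V_i$, with $x$ playing the role of the new vertex of degree at most $\Delta<2\Delta$), \emph{not} with color classes as parts. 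An independent transversal $\set{x,w_2,\ldots,w_k}$ then licenses the coordinated recoloring $\zeta(w_i)=1$, $\zeta(z_i)=\pi(w_i)$, $\zeta(v)=\pi(v)$ otherwise. The definition of $W_i$ is exactly what guarantees that assigning $\pi(w_i)$ to $z_i$ creates no conflict, and the independence of the $w_i$ together with $x$ guarantees that color $1$ remains proper.

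Your suggestion to ``pass to the auxiliary graph whose parts are color classes'' fails on the size hypothesis alone: each color class has only $k$ vertices, and $k$ bears no relation to $\Delta$, so $\card{\text{part}}\geq 2\Delta(H)$ cannot be arranged. Also, an independent transversal of color classes is a rainbow independent set, which is not the object the argument needs.

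Finally, the paper avoids the potential-decrease bookkeeping entirely by taking an edge-minimal counterexample $G$: minimality forces each $V_i$ to be independent, and deleting a single edge $xy$ with $x\in V_1$, $y\in V_2$ produces a good coloring $\pi$ of $G-xy$ with exactly one offending monochromatic pair. Your $\Phi$-decreasing loop can be made to work, but only once you supply the $W_i$-based coordinated swap and verify it never creates a fresh monochromatic edge; in the form given, ``find a swap that decreases $\Phi$'' assumes precisely what needs to be proven.
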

\begin{proof}
We only need to prove that graphs with $n \DefinedAs 3\Delta k$ vertices have a $3\Delta$ coloring for each $k \geq 1$.  
Suppose not and choose a counterexample $G$ minimizing $\size{G}$.  Put $r \DefinedAs 3\Delta(G)$ and let $V_1, \ldots, V_k$ be a partition 
of $G$ for which there is no acceptable coloring.  Then the $V_i$ are independent by minimality of $\size{G}$. By symmetry we may assume 
there are adjacent vertices $x \in V_1$ and $y \in V_2$. Apply minimality of $\size{G}$ to get an $r$-coloring $\pi$ of $G - xy$ 
with $\pi(V_i) = \irange{r}$ for each $i \in \irange{k}$.  We will modify $\pi$ to get such a coloring of $G$.

By symmetry, we may assume that $\pi(x) = \pi(y) = 1$.  For $2 \leq i \leq k$, let $z_i$ be the unique element of $\pi^{-1}(1) \cap V_i$ and 
put $W_i \DefinedAs V_i - \setb{v}{V_i}{\pi(v) = \pi(w) \text{ for some } w \in N(z_i)}$.  Then $\card{W_i} \geq 2\Delta(G)$ and we may apply 
Lemma \ref{SingletonSetTransversal} to get a $G$-independent transversal $w_1, w_2, \ldots, w_k$ of $\set{x}, W_2, W_3, \ldots, W_k$.  
Define a new coloring $\zeta$ of $G$ by

\begin{equation*}
\zeta(v) \DefinedAs 
\begin{cases}
1 & \text{if $v = w_i$} \\
\pi(w_i) & \text{if $v = z_i$} \\
\pi(v) & \text{otherwise.}
\end{cases}
\end{equation*}

\noindent Then $\zeta$ is a proper coloring of $G$ with $\zeta(V_i) = \irange{r}$ for each $i \in \irange{k}$, a contradiction.
\end{proof}

For our application we will need a lopsided version of Lemma \ref{SingletonSetTransversal} generalizing King's \cite{KingHitting} lopsided version of Haxell's lemma.

\begin{lem}\label{SingletonSetTransversalLopsided}
Let $H$ be a graph and $V_1 \cup \cdots \cup V_r$ a partition of $V(H)$.  
Suppose there exists $t \geq 1$ such that for each $i \in \irange{r}$ and each $v \in V_i$ we have $d(v) \leq \min\set{t, \card{V_i}-t}$.  For any $S \subseteq V(H)$ with $\card{S} < \min\set{\card{V_1}, \ldots, \card{V_r}}$, there is an independent transversal $I$ of $V_1, \ldots, V_r$ with $I \cap S = \emptyset$.
\end{lem}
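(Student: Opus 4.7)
\medskip

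\noindent\textbf{Proof plan.} The plan is to follow the edge-minimality approach used in the proof of Lemma~\ref{SingletonSetTransversal}, but with the forbidden set $S$ playing the role that $N(x)$ played there, and with the lopsided balance $d(v)\leq\min\set{t,|V_i|-t}$ replacing the uniform $|V_i|\geq 2\Delta(H)$ bound in the counting step.

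Suppose for contradiction that no such independent transversal exists, and let $Q$ be an edge-minimal spanning subgraph of $H$ (with the same partition) such that $V_1,\dots,V_r$ still has no independent transversal in $Q$ avoiding $S$. For any edge $yz\in E(Q)$, edge-minimality gives an IT of $Q-yz$ avoiding $S$, and this IT must contain both $y$ and $z$ (else it would already be an IT of $Q$). Thus the endpoints of every edge of $Q$ lie outside $S$; in particular $S$ consists of isolated vertices of $Q$.

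Next, pick an edge $yz\in E(Q)$ and invoke Lemma~\ref{BaseTransversalLemma}, adapted so that every partial transversal built by the Aharoni--Berger--Ziv/Haxell swap process is restricted to avoid $S$. Since $|V_i\setminus S|\geq 1$ for every $i$, this restriction never blocks the construction at an individual part, and the usual argument goes through. The output is a set $J\subseteq\irange{r}$ containing the parts of $y$ and $z$, together with a totally dominating induced matching $M\subseteq E(Q)$ with $yz\in M$ and $|M|=|J|-1$, where ``totally dominating'' now means that every $v\in\bigcup_{i\in J}V_i\setminus S$ has a neighbor in $V(M)$ (whose endpoints all lie outside $S$ by the previous paragraph).

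Finally, we aim for a contradiction by bounding $|\bigcup_{i\in J}V_i\setminus S|$ from both sides. Each of the $2(|J|-1)$ matched endpoints $w\in V(M)$ has $d(w)\leq t$, forcing $|\bigcup_{i\in J}V_i\setminus S|\leq 2t(|J|-1)$. The hope is that the two-sided lopsided hypothesis $|V_i|\geq d(v)+t$ together with $|V_i|>|S|$ (and the fact, coming from the tree structure of $M$, that each $V_i$ with $i\in J$ contains matched vertices) yields the reverse strict inequality. I expect the main obstacle to be precisely this counting: the uniform bound $|V_i|\geq 2\Delta(H)$ used in Lemma~\ref{SingletonSetTransversal} is no longer available, and one will probably have to replace $d(w)\leq t$ by the tighter $d(w)\leq |V_{i_w}|-t$ and track how many matched endpoints sit in each part $V_i$ in order to make the accounting close across all legal values of $|S|$ and $|J|$.
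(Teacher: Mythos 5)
Your framework (delete/ignore $S$, edge‑minimize, apply Lemma~\ref{BaseTransversalLemma}, then count) is the paper's framework, and your instinct that the naive bound $d(w)\leq t$ giving $2t(|J|-1)$ won't close is right. But you stop exactly where the real work starts; the counting that finishes the proof is genuinely different from what you sketch, and it is not a matter of ``tracking endpoints per part'' in the way you suggest. Here is the missing idea. For a matching edge $uv\in M$ with $u\in V_a$ and $v\in V_b$, combine \emph{both} halves of the lopsided hypothesis across the two endpoints: $d(u)\leq t$ and $d(v)\leq |V_b|-t$ give $|N_H(u)\cup N_H(v)|\leq d(u)+d(v)\leq |V_b|$, and symmetrically $\leq |V_a|$, so the pair dominates at most $\min\{|V_a|,|V_b|\}$ vertices. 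Now use the tree structure that Lemma~\ref{BaseTransversalLemma} hands you (the multigraph on $J$ induced by $M$ is a tree $T$): root $T$ at some $c\in J$, and charge each matching edge to the leaf‑ward endpoint's part. Since each non‑root $a\in J$ has exactly one parent edge and $\min\{|V_a|,|V_{\mathrm{parent}(a)}|\}\leq |V_a|$, the total domination by $\bigcup M$ is at most $\sum_{i\in J\setminus\{c\}}|V_i|$. On the other hand, since $|S|<|V_c|$, the set $\bigcup_{i\in J}V_i\setminus S$ has size $>\sum_{i\in J}|V_i|-|V_c|=\sum_{i\in J\setminus\{c\}}|V_i|$, so $M$ cannot totally dominate it — the contradiction.

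Two smaller points. First, you do not need an ``adapted'' version of Lemma~\ref{BaseTransversalLemma} that threads $S$ through the swap argument; the paper just passes to $H':=H-S$ with the induced partition $V_i'$ (each nonempty since $|S|<\min_i|V_i|$), and applies the lemma verbatim to an edge‑minimal subgraph of $H'$. Your observation that $S$ is isolated in $Q$ gets you to an equivalent place but is extra machinery. Second, your proposed per‑part replacement $d(w)\leq |V_{i_w}|-t$ on its own still only bounds the contribution of a matched pair by $|V_a|+|V_b|-2t$, which does not telescope correctly against $\sum_{i\in J}|V_i|-|S|$; the crucial step is the asymmetric pairing of $t$ from one endpoint with $|V_i|-t$ from the other, together with rooting the tree to drop exactly one part from the sum.
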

\begin{proof}
Suppose the lemma fails for such an $S \subseteq V(H)$.  Put $H' \DefinedAs H - S$ and let $V_1', \ldots, V_r'$ be the induced partition of $H'$. Then there is no independent trasversal of $V_1', \ldots, V_r'$ and $\card{V_i'} \geq 1$ for each $i \in \irange{r}$. Create a graph $Q$ by removing edges from $H'$ until it is edge minimal without an independent transversal. Pick $yz \in E(Q)$ and apply Lemma 
\ref{BaseTransversalLemma} on $yz$ with the induced partition to get the guaranteed 
$J \subseteq \irange{r}$ and the tree $T$ with vertex set $J$ and an edge between $a, b \in
J$ for each $uv \in M$ with $u \in V_a'$ and $v \in V_b'$.  By our condition, for each $uv \in E(V_i, V_j)$, we have $\card{N_H(u) \cup N_H(v)} \leq \min\set{\card{V_i}, \card{V_j}}$.

Choose a root $c$ of $T$. Traversing $T$ in leaf-first order and for each leaf $a$ with parent $b$ picking $|V_a|$ from $\min\set{|V_a|, |V_b|}$ we get that the vertices in $M$ together dominate at most $\sum_{i \in J - c} \card{V_i}$ vertices in $H$.  Since $\card{S} < \card{V_c}$, $M$ cannot totally dominate $\bigcup_{i \in J} V_i'$, a contradiction.
\end{proof}

We note that the condition on $S$ can be weakened slightly.  Suppose we have ordered the $V_i$ so that $\card{V_1} \leq \card{V_2} \leq \cdots \leq \card{V_r}$.  Then for any $S \subseteq V(H)$ with $\card{S} < \card{V_2}$ such that $V_1 \not \subseteq S$, there is an independent transversal $I$ of $V_1, \ldots, V_r$ with $I \cap S = \emptyset$.  The proof is the same except when we choose our root $c$, choose it so as to maximize $\card{V_c}$.  Since $\card{J} \geq 2$, we get $\card{V_c} \geq \card{V_2} > \card{S}$ at the end.

\section{The recoloring technique}\label{recolorsection}
We can extract the idea in the proof of Theorem \ref{StrongColorBound} to get a
general recoloring technique.  Suppose $G$ is a $k$-vertex-critical graph and
pick $x \in V(G)$ and $(k-1)$-coloring $\pi$ of $H \DefinedAs G - x$.  Let $Z$
be a color class of $\pi$, say $Z = \pi^{-1}(1)$.  For each $z \in Z$, let
$O_z$ be the neighbors of $z$ which get a color that no other neighbor of $z$ gets; that is, put
$O_z \DefinedAs \setb{v}{N_H(z)}{\pi(v) \not \in \pi(N_H(z) - v)}$.  Suppose the
$O_z$ are pairwise disjoint.  If we could find an independent transversal
$\set{x} \cup \set{v_z}_{z \in Z}$ of $\set{x}$ together with the $O_z$, then
recoloring each $z \in Z$ with $\pi(v_z)$ and coloring each vertex in $\set{x}
\cup \set{v_z}_{z \in Z}$ with $1$ gives a proper $(k-1)$-coloring of $G$.  This
is exactly what happens in the above proof of the strong coloring result.  To
make this work more generally, we need to find situations where each $G[O_z]$
has high minimum degree.  Also, intuitively, the $O_z$ intersecting each other
should make things easier since recoloring a vertex in the intersection of
$O_{z_1}$ and $O_{z_2}$ works for both $z_1$ and $z_2$.  In our applications we
will allow some restricted intersections.

\section{Borodin-Kostochka when every vertex is in a big clique}
The case of $(\Delta-1)$-coloring is easier and provides a good warm-up for general coloring.  Also, we achieve tighter bounds in this case because the list coloring theory is more developed.

\subsection{A general decomposition}\label{GeneralDecomposition}
Let $\D_1$ be the collection of graphs without induced $d_1$-choosable
subgraphs.  Plainly, $\D_1$ is hereditary. For a graph $G$ and $t \in \IN$, let
$\CC_t$ be the maximal cliques in $G$ having at least $t$ vertices. We prove the
following decomposition result for graphs in $\D_1$ which generalizes Reed's decomposition in \cite{reed1999strengthening}.  

\begin{lem}\label{partition}
Suppose $G \in \D_1$ has $\Delta(G) \geq 8$ and contains no $K_{\Delta(G)}$. If
$\frac{\Delta(G) + 5}{2} \leq t \leq \Delta(G) - 1$, then $\bigcup \CC_t$ can be
partitioned into sets $D_1, \ldots, D_r$ such that for each $i \in \irange{r}$
at least one of the following holds:
\begin{itemize}
  \item $D_i = C_i \in \CC_t$,
  \item $D_i = C_i \cup \set{x_i}$ where $C_i \in \CC_t$ and $\card{N(x_i) \cap
  C_i} \geq t-1$.
\end{itemize}

\noindent Moreover, each $v \in V(G) - D_i$ has at most $t-2$ neighbors in $C_i$  for each $i \in \irange{r}$.
\end{lem}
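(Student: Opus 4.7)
The plan is to exploit the $\D_1$ hypothesis to control how the maximal cliques in $\CC_t$ can overlap, and then build the partition greedily. The basic numerical lever is that $2t \geq \Delta(G) + 5$: whenever two cliques $C, C' \in \CC_t$ share any vertex $v$, we have $C \cup C' \subseteq N[v]$, so $\card{C \cup C'} \leq \Delta(G) + 1$ and hence $\card{C \cap C'} \geq 2t - \Delta(G) - 1 \geq 4$.

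The first and main step is to establish an intersection dichotomy: for distinct $C, C' \in \CC_t$, either $C \cap C' = \emptyset$ or $\card{C \cap C'} \geq t - 1$. Given the four-vertex buffer above, the remaining case is $4 \leq \card{C \cap C'} \leq t - 2$. In this regime $G[C \cup C']$ has very restricted structure --- two $t$-cliques sharing a common sub-clique $B = C \cap C'$, with limited bipartite connections between the private parts $C \setminus C'$ and $C' \setminus C$ forced by maximality of $C$ and $C'$ --- and inside it I would aim to pick out an induced subgraph known to be $d_1$-choosable via the list-coloring toolkit of \cite{mules}, contradicting $G \in \D_1$. Maximality of each clique in $\CC_t$ rules out the extreme case $C \subsetneq C'$, so when $\card{C \cap C'} \geq t - 1$ we must have $\card{C} = \card{C'}$ and the two cliques differ by swapping a single vertex.

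With the dichotomy in hand, I would build the partition greedily. Let $U$ be the set of vertices not yet partitioned, initialized to $\bigcup \CC_t$. At each step, pick any $C \in \CC_t$ with $C \subseteq U$ and form $X_C \DefinedAs \setb{x}{U \setminus C}{\card{N(x) \cap C} \geq t - 1}$. Each such $x$, together with its at least $t - 1$ neighbors in $C$, is a member of $\CC_t$ differing from $C$ by one swap. A second invocation of $\D_1$ --- again hunting for a small $d_1$-choosable induced subgraph inside $G[C \cup X_C]$ --- should force $\card{X_C} \leq 1$. Setting $D_i \DefinedAs C \cup X_C$ and removing its vertices from $U$, we iterate until $U$ is empty.

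The moreover clause is then automatic from the construction: if $v \in V(G) \setminus D_i$ had $\card{N(v) \cap C_i} \geq t - 1$, then $(N(v) \cap C_i) \cup \set{v}$ would extend to some $C' \in \CC_t$ sharing at least $t - 1$ vertices with $C_i$, so at the step we formed $D_i$ the vertex $v$ would already have been in $X_{C_i}$ and thus absorbed into $D_i$. The main obstacle is the first step: identifying the precise $d_1$-choosable obstructions from \cite{mules} that rule out intermediate intersections, and separately bounding $\card{X_C} \leq 1$. Everything else is bookkeeping once these two forbidden-subgraph claims are nailed down.
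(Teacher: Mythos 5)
Your high-level plan matches the paper's: establish an intersection structure on $\CC_t$ via $\D_1$, then read off the partition. The first step (if $C, C' \in \CC_t$ meet, then $\card{C \cap C'} \geq 4$, so $G[C \cup C'] = \join{K_{\card{C \cap C'}}}{G[C \triangle C']}$ is not $d_1$-choosable, and $K_tClassification$ forces the smaller clique to have exactly one private vertex) is exactly what the paper does, although your conclusion that $\card{C} = \card{C'}$ is not correct; only the smaller clique is constrained to have one private vertex, and when $\Delta \geq 9$ the larger may have two or more.

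The genuine gap is the claim that a second $\D_1$ argument forces $\card{X_C} \leq 1$. For an arbitrary choice of $C$ this is simply false. Take $\Delta \geq 9$, $t = \ceil{\frac{\Delta+5}{2}}$, a maximal clique $C$ of size $t$, and another maximal clique $C'$ of size $t+1$ with $\card{C \cap C'} = t-1$; writing $\set{x} = C \setminus C'$ and $\set{y_1, y_2} = C' \setminus C$, the only non-edges in $G[C \cup C']$ are $xy_1$ and $xy_2$. Then $G[C \cup C'] = \join{K_{t-1}}{(\djunion{K_1}{K_2})}$, which by $K_tClassification$ has no $d_1$-choosable induced subgraph, so nothing in $\D_1$ prevents this configuration; yet $X_C = \set{y_1, y_2}$ has two elements, since each $y_i$ sees all $t-1$ vertices of $C \cap C'$. (For larger $\Delta$ one can get $\card{X_C}$ as large as $\Delta - t$.) Note that the set $C \cup X_C$ does still have the required form, namely $C' \cup \set{x}$, but the clique you must name as $C_i$ is $C'$, not the $C$ you started from. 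Your "$\card{X_C} \leq 1$" expectation, and the greedy bookkeeping built on it, silently assume you always start from the largest clique of a cluster; you never impose that.

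The paper sidesteps all this by showing, instead, that no clique of $\CC_t$ meets \emph{two} distinct members of $\CC_t$: if $C_i$ meets $C_j$ and $C_k$, then step 1 gives $\card{C_i \cap C_j}, \card{C_i \cap C_k} \geq \frac{\Delta+3}{2}$, so $I \DefinedAs C_i \cap C_j \cap C_k$ has at least $4$ vertices, and $G[C_i \cup C_j \cup C_k] = \join{K_{\card{I}}}{G[(C_i \cup C_j \cup C_k) - I]}$ must avoid $d_1$-choosable induced subgraphs; but the maximality of $C_i, C_j, C_k$ rules out the "almost complete" case of $K_tClassification$, and the $E_3$/claw exceptions force $t \leq 6$, hence $\Delta \leq 7$, a contradiction. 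Once you know the intersection graph of $\CC_t$ has maximum degree one, the partition into singletons and pairs (with each pair giving $D_i = C_{\text{larger}} \cup \set{x}$) is immediate, and the moreover clause follows because a vertex $v \notin D_i$ with $t-1$ neighbors in $C_i$ would put some $C' \in \CC_t$ through $v$ in contact with $C_i$, giving $C_i$ a second intersecting clique. Your moreover argument also has a loose end for vertices absorbed into earlier blocks $D_j$, $j < i$, which no longer lie in $U$ when $X_{C_i}$ is formed; that case, too, is cleanly handled by the max-degree-one structure rather than by the greedy bookkeeping.
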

\begin{proof}
Suppose $\card{C_i} \leq \card{C_j}$ and $C_i \cap C_j \neq \emptyset$. 
Then $\card{C_i \cap C_j} \geq \card{C_i} + \card{C_j} - (\Delta + 1) \geq 4$.  It follows from Corollary
\ref{K_tClassification} that $\card{C_i - C_j} \leq 1$.

Now suppose $C_i$ intersects $C_j$ and $C_k$.  By the above,
$\card{C_i \cap C_j} \geq \frac{\Delta(G) + 3}{2}$ and similarly $\card{C_i \cap
C_k} \geq \frac{\Delta(G) + 3}{2}$.  Hence $\card{C_i \cap C_j \cap C_k} \geq
\Delta(G) + 3 - (\Delta(G) - 1) = 4$.  Put $I \DefinedAs C_i \cap C_j \cap C_k$
and $U \DefinedAs C_i \cup C_j \cup C_k$.  By maximality of $C_i, C_j, C_k$,
$U$ cannot induce an almost complete graph.  Thus, by Corollary
\ref{K_tClassification}, $\card{U} \in \set{4, 5}$ and the graph induced on $U -
I$ is $E_3$.  But then $t \leq 6$ and hence $\Delta(G) \leq 7$, a contradiction.

\smallskip

\noindent The existence of the required partition is immediate. 
\end{proof}

\noindent When $D_i \in \CC_t$, we put $K_i \DefinedAs C_i \DefinedAs D_i$ and
when $D_i = C_i \cup \set{x_i}$, we put $K_i \DefinedAs N(x_i) \cap C_i$.

\subsection{Doing the recoloring}

Let $G$ be a graph.  For $v \in V(G)$, we let $\omega(v)$ be the size of a
largest clique in $G$ containing $v$.  The proofs of the results in this section
go more smoothly when we strengthen the induction in terms of the parameter
$\rho(G) \DefinedAs \max_{v \in V(G)} d(v) - \omega(v)$.

\begin{lem}\label{MainBKLemma}
For $k \geq 9$, every graph satisfying $\Delta \leq k$, $\omega < k$ and $\rho
\leq \frac{k}{3} - 2$ is $(k-1)$-colorable.
\end{lem}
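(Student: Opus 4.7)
The plan is to argue by contradiction, taking a counterexample $G$ minimizing $|V(G)|$ (breaking ties with $|E(G)|$). A standard criticality reduction shows $G$ is $k$-critical, so $\delta(G) \geq k-1$; moreover, the $d_1$-choosability theory of \cite{mules} rules out any $d_1$-choosable induced subgraph of $G$ (such a subgraph could be colored last after $(k-1)$-coloring the rest), so $G \in \D$. Brooks' theorem together with $\omega(G) < k$ and $k \geq 9$ then forces $\Delta(G) = k$.

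Next, set $t \DefinedAs \lceil 2k/3 \rceil + 1$, so that $\frac{k+5}{2} \leq t \leq k-1$ for $k \geq 9$. The hypothesis $\rho(G) \leq k/3 - 2$ forces $\omega(v) \geq d(v) - k/3 + 2 \geq (k-1) - k/3 + 2 \geq t$ for every vertex, hence $\bigcup \CC_t = V(G)$, and Lemma \ref{partition} yields a partition $D_1, \ldots, D_r$ of $V(G)$ in which each $D_i$ carries a clique $K_i$ of size $\geq t-1$, and no vertex outside $D_i$ has more than $t-2$ neighbors in the underlying $C_i$.

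Now pick a vertex $x$ whose removal is ``gentle'' with respect to the partition (for instance, the attached vertex $x_i$ when some $D_i$ has the two-set form, otherwise a clique vertex chosen to minimize outside-degree), and let $\pi$ be a $(k-1)$-coloring of $G - x$ guaranteed by criticality. Since $G$ is $k$-critical, all $k-1$ colors appear on $N(x)$. Fix a color $c$ and put $Z \DefinedAs \pi^{-1}(c)$; for each $z \in Z$, form $O_z$ as in Section~\ref{recolorsection}. Each $z$ lies in a unique clique $K_{j(z)}$ from the partition, and its $\geq t-2$ clique-neighbors carry distinct colors under $\pi$, while the at most $\rho + 1 \leq k/3 - 1$ other neighbors of $z$ can block at most that many of those colors; hence $V_z \DefinedAs O_z \cap K_{j(z)}$ has size at least $(t-2) - (k/3-1) \geq k/3$, and the $V_z$'s are pairwise disjoint across $z \in Z$, since $z \mapsto j(z)$ is injective (a clique carries each color at most once).

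Finally, apply Lemma \ref{SingletonSetTransversalLopsided} to $\{V_z\}_{z \in Z}$ with a forbidden set $S$ chosen to capture the neighbors of $x$, producing an independent transversal $\{v_z\}$ in $G$ with $v_z \notin N(x)$. Recoloring $z \mapsto \pi(v_z)$ for each $z \in Z$ and then assigning color $c$ to each $v_z$ and to $x$ yields a proper $(k-1)$-coloring of $G$, the desired contradiction. The main obstacle will be meeting the hypotheses of Lemma \ref{SingletonSetTransversalLopsided}: since $|N(x)|$ may reach $k$ while each $|V_z|$ is only about $k/3$, the naive choice $S = N(x)$ violates the size condition $|S| < \min |V_z|$. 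Overcoming this requires exploiting the ``$\leq t-2$ neighbors in any $C_j$'' bound from Lemma \ref{partition} to thin $N(x)$ within each $V_z$, together with the post-statement refinement of Lemma \ref{SingletonSetTransversalLopsided} (which allows $|S| < |V_2|$ provided $V_1 \not\subseteq S$, with a careful root choice). Coordinating the choices of $x$ and $c$ so that these constraints close for every $k \geq 9$ is the technical heart of the argument.
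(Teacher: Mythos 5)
Your high-level plan matches the paper's: establish vertex criticality, apply Lemma \ref{partition} to get the clique decomposition, then use the recoloring strategy from Section~\ref{recolorsection} with the lopsided transversal lemma. But you have correctly identified the crux and then not resolved it, and the resolution is a genuinely different move than the ones you float. The paper does \emph{not} simply delete $x$ and color $G-x$. Instead it first picks $x \in K_1$, proves $x$ has a neighbor $w$ outside $D_1$, and then proves that $xw$ is a \emph{critical edge} (using the $\rho$-monotonicity in the minimality hypothesis together with the fact that every largest clique through any vertex lives inside that vertex's $D_j$). It then colors $G - xw$ with $\pi(x) = 1$ and minimizes $|\pi^{-1}(1)|$. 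Because $x$ is universal in $D_1$ and already holds color $1$, no vertex of $D_1 - x$ can be colored $1$, so $Z \cap D_1 = \emptyset$. This is what kills the $|S|$ problem: $x$ then has at least $|D_1| - 1 \geq \omega(x) - 1$ neighbors inside $D_1$, none of which lie in any $V_z$, so $|S| \leq d(x) + 1 - \omega(x) \leq \rho + 1 \leq \Delta/3 - 1$, comfortably below every $|V_z|$. Your proposed substitutes -- thinning $N(x)$ via the ``$\leq t-2$ neighbors in $C_j$'' bound, or the post-statement refinement of Lemma \ref{SingletonSetTransversalLopsided} -- do not close this gap: $x$ can have neighbors scattered across many $D_j$'s, and the refinement only relaxes $|S| < \min|V_i|$ to $|S| < |V_2|$, which still fails if $|S|$ is close to $k$.

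A second genuine error: your claim that ``$z \mapsto j(z)$ is injective (a clique carries each color at most once)'' is false. The parts $D_i$ are not cliques; when $D_i = C_i \cup \{x_i\}$, the vertex $x_i$ need not be adjacent to all of $C_i$, so $x_i$ and some vertex of $C_i - K_i$ can both receive color $1$. The paper handles exactly this: it splits $Z$ into the uniquely-represented indices and the paired set $Z'$, and for pairs $\{z, z'\}$ with $i(z) = i(z')$ it takes $V_z := O_z \cap O_{z'} \cap K_{i(z)}$, redoing the size estimate for that case. Finally, your size estimate $|V_z| \geq k/3$ is weaker than the paper's $\geq 2\Delta/3 - 1$ (the paper counts from $\omega(z) - 1$ using the ``at most $2 + d(z) - \Delta$ doubled color classes'' bound that comes from minimizing $|Z|$, which you drop); with only $|V_z| \approx k/3$ and the ``$\leq \Delta/3 - 1$ neighbors outside $D_{i(z)}$'' bound, the hypothesis $d(v) \leq |V_i| - t$ of Lemma \ref{SingletonSetTransversalLopsided} does not hold either.
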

\begin{proof}
Suppose the theorem fails for some $k \geq 9$ and choose a counterexample
$G$ minimizing $\card{G} + \size{G}$. Put $\Delta \DefinedAs \Delta(G)$.  If
$\Delta < k$, then $\Delta = k-1$ and by Brooks' theorem $G$ contains $K_k$, a
contradiction. Thus $\chi(G) = k = \Delta$.  Also, for any $v
\in V(G)$ we have $\rho(G-v) \leq \rho(G)$, applying our minimality condition on $G$ implies that $G$ is
vertex critical.

Therefore $\delta(G) \geq \Delta - 1$ and $G \in
\D_1$.  For any $v \in V(G)$, we have $\Delta - 1 - \omega(v) \leq d(v) -
\omega(v) \leq \frac{\Delta}{3} - 2$ and hence $\omega(v) \geq \frac23\Delta +
1$. Applying Lemma \ref{partition} with $t \DefinedAs \frac23\Delta + 1$ we
get a partition $D_1, \ldots, D_r$ of $\bigcup \CC_t = V(G)$.  Note that for $i
\in \irange{r}$, if $K_i \neq D_i$ then all vertices in $K_i$ are high by Lemma
\ref{E2JoinWithSomeLow}.  Pick $x \in K_1$.  Then $x$ has $\card{C_1} - 1 \leq
\Delta - 2$ neighbors in $D_1$ if $K_i = D_i$ and $\card{C_1} \leq \Delta - 1$
if $K_i \neq D_i$.  Hence, by our note, $x$ has a neighbor $w \in V(G) - D_1$.

We now claim that $xw$ is a critical edge in $G$.  Suppose otherwise that
$\chi(G - xw) = \Delta$.  Then by minimality of $G$ we must have $\rho(G-xw) >
\rho(G)$. Hence there is some vertex $v \in N(x) \cap N(w)$ so that every
largest clique containing $v$ contains $xw$.  But $v$ is in some $D_j$ and all largest cliques containing $v$ are contained in $D_j$ and hence do not contain $xw$, a contradiction.  

Let $\pi$ be a $(\Delta-1)$-coloring of $G - xw$ chosen so that $\pi(x) = 1$
and so as to minimize $\card{\pi^{-1}(1)}$. Consider $\pi$ as a coloring of
$G-x$. One key property of $\pi$ we will use is that since $x$ got $1$ in the
coloring of $G - xw$ and $x \in K_1$, no vertex of $D_1 - x$ gets colored $1$ by $\pi$.

Now put $Z \DefinedAs \pi^{-1}(1)$ and for $z \in Z$, let $O_z$ be as defined
in Section \ref{recolorsection}.  By minimality of $\card{Z}$, each $z \in Z$ has at least one neighbor in every color class of $\pi$.  
Hence $z$ has two or more neighbors in at most $2 + d(z) - \Delta$ of
$\pi$'s color classes. For each $z \in Z$ we have $i(z)$ such that $z \in D_{i(z)}$. For $z \in Z$ such that $i(z) \not \in i(Z - z)$, put $V_z \DefinedAs O_z \cap C_{i(z)}$.  We have $\card{V_z} \geq \omega(z) - 1 - \parens{2 + d(z) - \Delta}$.  Since $\omega(z) \geq d(z) - \frac13 \Delta + 2$, we have $\card{V_z} \geq \frac23 \Delta - 1$.   Each $y \in V_z$ is adjacent to all of $C_{i(z)} - \set{y}$ and hence has at most $d(y) + 1 - \card{C_{i(z)}}$ neighbors outside $D_{i(z)}$.  Since $\omega(y) \geq d(y) +
2 - \frac13 \Delta$, we conclude that $y$ has at most $d(y) + 1
- (d(y) + 2 - \frac13 \Delta) = \frac13 \Delta - 1$ neighbors outside $D_{i(z)}$.

Now let $Z'$ be the $z \in Z$ with $i(z) \in i(Z - z)$. Then $Z'$ can be partitioned into pairs $\set{z, z'}$ such that $i(z) = i(z')$.  For such a pair, one of $z,z'$ is $x_{i(z)}$ and the other is in $C_{i(z)} - K_{i(z)}$. Put $V_z \DefinedAs O_z \cap O_{z'} \cap K_{i(z)}$ and don't define $V_{z'}$.  We have $\card{V_z} \geq \min\set{\omega(z), \omega(z')} - 1 - \parens{2 + d(z) - \Delta} - \parens{2 + d(z') - \Delta} \geq - \frac13 \Delta + 2 - 1 - 2\parens{2 - \Delta} - \max\set{d(z), d(z')} = \frac53 \Delta - \max\set{d(z), d(z')} - 3 \geq \frac23 \Delta - 3$.  Each $y \in V_z$ is adjacent to all of $D_{i(z)} - \set{y}$ and hence has at most $d(y) + 1 - \card{D_{i(z)}}$ neighbors outside $D_{i(z)}$.  Since $\card{D_{i(z)}} = \omega(y) + 1 \geq d(y) + 3 - \frac13 \Delta$, we conclude that $y$ has at most $\frac13 \Delta - 2$ neighbors outside $D_{i(z)}$.

Let $H$ be the subgraph of $G$ induced on the union of the $V_z$.  Put $S \DefinedAs N(x) \cap V(H)$.  Since $Z \cap D_1 = \emptyset$, $x$ has at least $\card{D_1} - 1$ neighbors in $D_1$ none of which are in $S$.  Hence $\card{S} \leq d(x) + 1 - \card{D_1} \leq d(x) + 1 - \omega(x) \leq \frac{\Delta}{3} - 1 < \card{V_z}$ for all $V_z$ since $\Delta \geq 7$. Hence we may apply Lemma
\ref{SingletonSetTransversalLopsided} on $H$ with $t \DefinedAs \frac13\Delta - 1$ to get an independent set $\set{v_z}_{z\in Z}$ disjoint from $S$ where $v_z \in V_z$. Recoloring each $z \in Z$ with $\pi(z)$ and
coloring $x \cup \set{v_z}_{z \in Z}$ with $1$ gives a $(\Delta - 1)$-coloring
of $G$, a contradiction.
\end{proof}

The following special case is a bit easier to digest.

\begin{thm}\label{TwoThirdsCliqueCor}
Every graph with $\chi \geq \Delta \geq 9$ such that every
vertex is in a clique on $\frac23\Delta + 2$ vertices contains $K_\Delta$.
\end{thm}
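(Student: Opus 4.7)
The plan is to obtain this as an immediate consequence of Lemma \ref{MainBKLemma}, proved by contrapositive. Assume $G$ has $\Delta(G) \geq 9$ and every vertex lies in a clique on at least $\frac{2}{3}\Delta(G) + 2$ vertices, and suppose for contradiction that $G$ does not contain $K_{\Delta(G)}$. The goal is to show $\chi(G) \leq \Delta(G) - 1$, contradicting $\chi(G) \geq \Delta(G)$.

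Set $k \DefinedAs \Delta(G)$ and check the three hypotheses of Lemma \ref{MainBKLemma}. The bound $\Delta(G) \leq k$ is trivial, and $\omega(G) < k$ holds by the assumption that $K_{\Delta(G)} \not\subseteq G$. For the key condition on $\rho$, note that for every $v \in V(G)$ we have $\omega(v) \geq \frac{2}{3}\Delta(G) + 2$ by hypothesis and $d(v) \leq \Delta(G)$ trivially, so
\[
d(v) - \omega(v) \;\leq\; \Delta(G) - \left(\tfrac{2}{3}\Delta(G) + 2\right) \;=\; \tfrac{\Delta(G)}{3} - 2 \;=\; \tfrac{k}{3} - 2.
\]
Taking the maximum over $v$ gives $\rho(G) \leq \frac{k}{3} - 2$. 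Lemma \ref{MainBKLemma} now applies and yields a $(k-1)$-coloring of $G$, that is, $\chi(G) \leq \Delta(G) - 1$, as desired.

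There is no real obstacle in this deduction; all the work is buried in Lemma \ref{MainBKLemma}. The only thing to observe is that the parameter $\rho$ was engineered precisely so that a lower bound on $\omega(v)$ for every vertex $v$ translates, via the trivial upper bound $d(v) \leq \Delta$, into an upper bound on $\rho(G)$, and the numerical threshold $\frac{2}{3}\Delta + 2$ in the clique hypothesis matches the threshold $\frac{k}{3} - 2$ on $\rho$ in the lemma.
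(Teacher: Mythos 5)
Your deduction is correct and is exactly the route the paper intends: Theorem \ref{TwoThirdsCliqueCor} is stated as an immediate special case of Lemma \ref{MainBKLemma}, obtained by setting $k = \Delta(G)$ and observing that the clique hypothesis forces $\rho(G) \leq \frac{k}{3} - 2$, which is precisely the computation you give. The paper omits the explicit argument, but yours fills it in correctly with no gaps.
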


\subsection{Reducing to the irregular case}
It is easy to see that if there are irregular counterexamples to the
Borodin-Kostochka conjecture, then there are regular examples as well: take an
irregular counterexample $G$ clone it, add an edge between any vertex with degree
less than $\Delta(G)$ and its clone; repeat until you have a regular graph (from
\cite{molloy2002graph}).

But what about the converse?  If there are regular examples, must there be
(connected) irregular examples?  We'll see that the answer is yes, but we need
to decrease the maximum degree by one.

\begin{thm}\label{IrregularReduction}
Every graph satisfying $\chi \geq \Delta = k \geq 9$ either
contains $K_k$ or contains an irregular critical subgraph satisfying $\chi
= \Delta = k - 1$.
\end{thm}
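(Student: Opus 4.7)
The plan is to argue by contradiction. Suppose $G$ satisfies $\chi(G) \geq \Delta(G) = k \geq 9$, contains no $K_k$, and has no irregular critical subgraph with $\chi = \Delta = k - 1$. Brooks's theorem combined with $K_k \not\subseteq G$ forces $\chi(G) = k$, and passing to a vertex-critical subgraph we may assume $G$ itself is vertex-critical with $\chi(G) = k$. Then $\delta(G) \geq k - 1$, so every vertex of $G$ has degree $k - 1$ or $k$; let $A \DefinedAs \{v \in V(G) : d_G(v) = k\}$.

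Fix $v \in A$. Vertex-criticality of $G$ gives $\chi(G - v) = k - 1$, so there is a vertex-critical $H \subseteq G - v$ with $\chi(H) = k - 1$, $\delta(H) \geq k - 2$, and $\Delta(H) \leq k$. The goal is to choose $v$ so that $\Delta(H) = k - 1$ and $H$ is irregular, directly contradicting our assumption. A short case analysis on $\Delta(H)$ handles most possibilities. If $\Delta(H) \leq k - 2$, Brooks's theorem (with $k - 1 \geq 8$) forces $H = K_{k-1}$; since $G$ is connected with $K_k \not\subseteq G$, there is a degree-$k$ vertex $v'$ with between $1$ and $k - 2$ neighbors in $V(H)$, and then $G[V(H) \cup \{v'\}]$ is irregular with $\chi = \Delta = k - 1$, so passing to a vertex-critical sub-subgraph containing a non-neighbor of $v'$ in the clique yields the forbidden subgraph. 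If $\Delta(H) = k$, some $w \in V(H)$ has $d_G(w) = k$ with $vw \notin E(G)$, and one re-chooses $v$ within $A$ to minimize its number of non-neighbors in $A$ and iterates into the other cases.

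The main obstacle is the remaining case $\Delta(H) = k - 1$ with $H$ being $(k - 1)$-regular, where $H$ is itself a critical subgraph with $\chi = \Delta = k - 1$ that is regular. To dispose of it, apply Lemma \ref{MainBKLemma} to $H$ with parameter $k - 1$: this forces either $\omega(H) \geq k - 1$ or $\rho(H) > (k - 1)/3 - 2$. In the first branch, the $K_{k-1}$ sitting inside $G - v$ extends via a carefully chosen degree-$k$ vertex of $G$ to an irregular subgraph with $\chi = \Delta = k - 1$, exactly as in the $\Delta(H) \leq k - 2$ case. In the second branch, a vertex $u \in V(H)$ with $d_H(u) - \omega_H(u) > (k - 1)/3 - 2$ is removed to break $(k - 1)$-regularity, and the clique-partition structure from Lemma \ref{partition} is used to ensure that the resulting vertex-critical closure still has $\Delta = k - 1$. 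This final verification, namely that taking the critical closure of the perturbed graph neither drops $\Delta$ below $k - 1$ nor restores regularity, is the most delicate step of the argument.
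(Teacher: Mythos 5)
Your proof takes a genuinely different route from the paper's, but it has gaps that I don't see how to close; the hardest case is precisely the one you leave unresolved.

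The paper's proof uses two ideas you don't. First, Theorem \ref{TwoThirdsCliqueCor} supplies a vertex $v$ lying in no $(k-1)$-clique of $G$. Second --- and this is the key device --- it deletes an entire color class $T$ of a $(k-1)$-coloring $\pi$ of $G-v$ and puts $v$ back in with $T$'s color. The resulting graph $H = G - T$ then has $d_H(v) = k-2$, and a maximality condition on $T$ together with Brooks' theorem forces $\Delta(H) = k-1$. Any $(k-1)$-critical $H' \subseteq H$ must contain $v$, because $H - v = (G-v)-T$ is $(k-2)$-colorable by $\pi$; hence $d_{H'}(v) \leq k-2 < k-1 = \Delta(H')$, and since $v$ lies in no $(k-1)$-clique, $H'$ is not $K_{k-1}$. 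That is what guarantees the required irregular critical subgraph. Your argument has no analogous mechanism forcing a low-degree vertex into the critical subgraph: you pass to an arbitrary vertex-critical $H \subseteq G - v$ and hope the degree structure works out, and it doesn't have to.

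Concretely: (1) In the branch where you find a $K_{k-1}$ inside $G-v$ and attach a degree-$k$ vertex $v'$, any $(k-1)$-vertex-critical subgraph of $G[V(H) \cup \set{v'}]$ is again just $K_{k-1}$. If $v'$ has at most $k-2$ neighbors in the clique it can be greedily colored after the rest, so it disappears when you take the critical closure; if it has $k-1$ neighbors you get $K_k$, which is excluded. Attaching one vertex to a clique never produces a new critical graph. (2) Applying Lemma \ref{MainBKLemma} with parameter $k-1$ requires $k-1 \geq 9$, so $k = 9$ is unhandled. (3) Worse, in the case $\Delta(H) = k-1$ with $H$ regular, the branch $\omega(H) \geq k-1$ is actually vacuous: a vertex-critical graph of chromatic number $k-1$ containing $K_{k-1}$ must be $K_{k-1}$ itself (any vertex outside the clique could be deleted without dropping $\chi$), and $K_{k-1}$ has $\Delta = k-2$, not $k-1$. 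So you are always thrown into the $\rho(H) > (k-1)/3 - 2$ branch, which you leave as ``the most delicate step'' with no argument; Lemma \ref{partition} by itself does not tell you how to convert a high-$\rho$ vertex into an irregular critical subgraph with $\Delta = k-1$. (4) The case $\Delta(H) = k$ is dispatched by ``re-choose $v$ and iterate,'' but there is no proof that a choice of $v$ avoiding this case exists, nor that the proposed minimization (fewest non-neighbors in $A$) is monotone under the re-choosing step or terminates in one of the other cases.
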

\begin{proof}
Suppose not and choose a counterexample $G$ minimizing $\card{G}$. Then $G$ is
vertex critical. If every vertex in $G$ were contained in a $(k-1)$-clique, 
then Corollary \ref{TwoThirdsCliqueCor} would give a $K_k$ in $G$, impossible. 
Hence we may pick $v \in V(G)$ not in a $(k-1)$-clique. If $v$ is high, choose a
$(k-1)$-coloring $\pi$ of $G-v$ so that the color class $T$ of $\pi$
where $v$ has two neighbors is as large as possible; if $v$ is low, let $\pi$ be a
$(k-1)$-coloring of $G-v$ where some color class $T$ of $\pi$ is as large as
possible.  By symmetry, we may assume that $\pi(T) = k-1$.  

Now we have a $(k-1)$-coloring $\zeta$ of $H \DefinedAs G-T$ given by $\zeta(x)
= \pi(x)$ for $x \neq v$ and $\zeta(v) = k-1$.  Since $\chi(H) = k - 1$, the
maximality condition on $T$ together with Brooks' theorem gives $\Delta(H) =
k - 1$.  Note that $d_H(v) = k - 2$.  Let $H'$ be a $(k-1)$-critical subgraph of
$H$.  Then $H'$ must contain $v$ and hence is not $K_{k-1}$.  Since $d_{H'}(v) =
k-2$ and $\Delta(H') = k - 1$ (by Brooks' theorem), $H'$ is an irregular
critical subgraph of $G$ satisfying $\chi = \Delta = k - 1$, a contradiction.
\end{proof}

\begin{figure}[htb]
\centering
\begin{tikzpicture}[scale = 10]
\tikzstyle{VertexStyle}=[shape = circle,	
								 minimum size = 1pt,
								 inner sep = 3pt,
                         draw]
\Vertex[x = 0.257401078939438, y = 0.729450404644012, L = \tiny {}]{v0}
\Vertex[x = 0.232565611600876, y = 0.681758105754852, L = \tiny {}]{v1}
\Vertex[x = 0.282104313373566, y = 0.681911885738373, L = \tiny {}]{v2}
\Vertex[x = 0.383801102638245, y = 0.820650428533554, L = \tiny {}]{v3}
\Vertex[x = 0.358965694904327, y = 0.772958129644394, L = \tiny {}]{v4}
\Vertex[x = 0.40850430727005, y = 0.773111909627914, L = \tiny {}]{v5}
\Vertex[x = 0.506290018558502, y = 0.730872631072998, L = \tiny {}]{v6}
\Vertex[x = 0.48145455121994, y = 0.683180332183838, L = \tiny {}]{v7}
\Vertex[x = 0.530993163585663, y = 0.683334112167358, L = \tiny {}]{v8}
\Vertex[x = 0.440956592559814, y = 0.589494824409485, L = \tiny {}]{v9}
\Vertex[x = 0.416121125221252, y = 0.541802525520325, L = \tiny {}]{v10}
\Vertex[x = 0.46565979719162, y = 0.541956305503845, L = \tiny {}]{v11}
\Vertex[x = 0.317756593227386, y = 0.592694818973541, L = \tiny {}]{v12}
\Vertex[x = 0.292921125888824, y = 0.545002520084381, L = \tiny {}]{v13}
\Vertex[x = 0.342459797859192, y = 0.545156300067902, L = \tiny {}]{v14}
\Edge[](v2)(v1)
\Edge[](v2)(v0)
\Edge[](v1)(v0)
\Edge[](v4)(v3)
\Edge[](v5)(v3)
\Edge[](v5)(v4)
\Edge[](v7)(v6)
\Edge[](v8)(v6)
\Edge[](v8)(v7)
\Edge[](v10)(v9)
\Edge[](v11)(v9)
\Edge[](v11)(v10)
\Edge[](v13)(v12)
\Edge[](v14)(v12)
\Edge[](v14)(v13)
\Edge[](v3)(v0)
\Edge[](v4)(v0)
\Edge[](v5)(v0)
\Edge[](v3)(v2)
\Edge[](v4)(v2)
\Edge[](v5)(v2)
\Edge[](v3)(v1)
\Edge[](v4)(v1)
\Edge[](v5)(v1)
\Edge[](v3)(v6)
\Edge[](v4)(v6)
\Edge[](v5)(v6)
\Edge[](v3)(v7)
\Edge[](v4)(v7)
\Edge[](v5)(v7)
\Edge[](v3)(v8)
\Edge[](v4)(v8)
\Edge[](v5)(v8)
\Edge[](v6)(v9)
\Edge[](v7)(v9)
\Edge[](v8)(v9)
\Edge[](v6)(v11)
\Edge[](v7)(v11)
\Edge[](v8)(v11)
\Edge[](v6)(v10)
\Edge[](v7)(v10)
\Edge[](v8)(v10)
\Edge[](v12)(v10)
\Edge[](v13)(v10)
\Edge[](v14)(v10)
\Edge[](v12)(v9)
\Edge[](v13)(v9)
\Edge[](v14)(v9)
\Edge[](v12)(v11)
\Edge[](v13)(v11)
\Edge[](v14)(v11)
\Edge[](v12)(v2)
\Edge[](v13)(v2)
\Edge[](v14)(v2)
\Edge[](v12)(v1)
\Edge[](v13)(v1)
\Edge[](v14)(v1)
\Edge[](v12)(v0)
\Edge[](v13)(v0)
\Edge[](v14)(v0)
\end{tikzpicture}
\caption{$M_8$: A $C_5$ with vertices blown-up to triangles.}
\label{fig:M_8}
\end{figure}
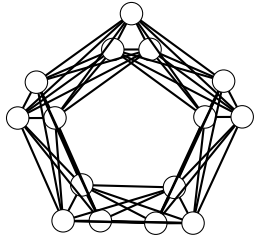

Since the only known critical (or connected even) counterexample to
Borodin-Kostochka for $\Delta = 8$ is regular (see Figure \ref{fig:M_8}) we
might hope that the following strengthened conjecture is true.

\begin{conjecture}\label{EightRegular}
Every critical graph with $\chi \geq \Delta = 8$ is regular.
\end{conjecture}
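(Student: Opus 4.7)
The plan is to extend the structural/recoloring framework of Lemma \ref{MainBKLemma}, which is currently restricted to $k \geq 9$, down to $k = 8$, using irregularity as the crucial extra hypothesis. Suppose for contradiction that $G$ is a critical graph with $\chi \geq \Delta(G) = 8$ that is irregular. Since $K_9$ is regular, we must have $\chi(G) = 8$, so $G$ is $8$-vertex-critical with $\delta(G) \geq 7$. Irregularity then forces $G$ to contain both low vertices (of degree $7$) and high vertices (of degree $8$), and $\omega(G) \leq 7$ by Brooks' theorem.

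First I would exploit criticality to force $G \in \D_1$: a $(\Delta-1)$-coloring of $G - e$ for a critical edge $e$ extends through any induced $d_1$-choosable subgraph, so no such subgraph exists. Next, irregularity must be used to compensate for the failure of the $\rho$-hypothesis of Lemma \ref{MainBKLemma}: the aim is to prove that every vertex $v \in V(G)$ lies in a $K_7$. This statement is false for regular graphs -- indeed $M_8$ (Figure \ref{fig:M_8}) has $\omega = 6$ -- so the argument must genuinely use the existence of a low vertex. Once this structural step is in place, Lemma \ref{partition} applies with $t \DefinedAs 7$ and yields a partition of $V(G) = \bigcup \CC_7$ into fat cliques $D_i$, each either a $K_7$ or a $K_7$ with one attached vertex.

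Given the decomposition, I would pick a low vertex $x$ inside some $K_1 \subseteq D_1$, locate a critical edge $xw$ with $w \notin D_1$ (mirroring the critical-edge argument of Lemma \ref{MainBKLemma}), and run the recoloring scheme of Section \ref{recolorsection}: color $G - xw$ by minimality of $\card{G} + \size{G}$, examine the color class $Z$ containing $x$, build transversal targets $V_z \subseteq O_z$ for each $z \in Z$ using the same case split (isolated $i(z)$ versus paired $i(z) = i(z')$) as in Lemma \ref{MainBKLemma}, and apply Lemma \ref{SingletonSetTransversalLopsided} with $S \DefinedAs N(x) \cap V(H)$ to obtain an independent transversal disjoint from $N(x)$. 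The swap produces a $(\Delta-1)$-coloring of $G$, contradicting $\chi(G) = \Delta$.

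The hard part is the structural step, not the recoloring. To force every vertex into a $K_7$ using irregularity, one seemingly has to push the $d_1$-choosability theory of \cite{mules} much further than currently developed and classify precisely which dense neighborhood patterns can appear around a low vertex in a $\D_1$-graph with $\Delta = 8$, ruling out all of them except those that plant the vertex inside a $K_7$. Even then the recoloring numerics in Lemma \ref{MainBKLemma} are tight exactly at $\Delta = 9$: at $\Delta = 8$ the bound $\card{V_z} \geq \frac{2}{3}\Delta - O(1)$ and the out-of-clique budget $\frac{1}{3}\Delta - O(1)$ both shrink to single digits, and Lemma \ref{SingletonSetTransversalLopsided} then applies with essentially no margin. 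The genuine new idea must therefore propagate density from the low vertex through its critical-edge partner in a way that the $M_8$ example cannot mimic; obtaining such an argument is the main obstacle.
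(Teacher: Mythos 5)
This statement is labeled as a \emph{conjecture} in the paper (Conjecture~\ref{EightRegular}), and the paper offers no proof of it — it is an open problem, suggested merely because the only known critical (or even connected) counterexample to Borodin--Kostochka at $\Delta = 8$, namely $M_8$ in Figure~\ref{fig:M_8}, happens to be regular. There is therefore no ``paper's own proof'' to compare against. Your write-up is a sketch of a plausible attack rather than a proof, and to your credit you say so explicitly in the final paragraph.

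The strategy you outline (force $G \in \D_1$, show every vertex lies in a $K_7$, invoke Lemma~\ref{partition} with $t = 7$, and rerun the recoloring of Lemma~\ref{MainBKLemma}) mirrors the $k \geq 9$ machinery, but you correctly identify that the argument does not go through at $\Delta = 8$ for two independent reasons. First, the structural step: the $\rho$-hypothesis of Lemma~\ref{MainBKLemma} is unavailable, and there is no known way to use irregularity to plant every vertex in a $K_7$ — $M_8$ shows that regular $\D_1$-graphs at $\Delta = 8$ genuinely fail this, and the $d_1$-choosability classification from \cite{mules} does not currently rule out the relevant low-vertex neighborhoods. Second, the numerics in the recoloring are tight precisely at $\Delta = 9$: Lemma~\ref{partition} already needs $\Delta \geq 8$ just to be stated, and the slack in $\card{V_z}$ versus $\card{S}$ evaporates. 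Your proposal names both obstacles accurately, but neither is overcome, so there is no proof here — which is consistent with the statement's status in the paper as a conjecture.
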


\subsection{Dense neighborhoods}
Here we show that the Borodin-Kostochka conjecture holds for graphs where each neighboorhood has ``most'' of its possible edges.  First, we need to convert high average degree in a neighborhood into a large clique in the neighborhood. We need the following extension of a fundamental result of Mader \cite{mader} (see Diestel \cite{diestel2010} for some history of this result).  We will also need $d_1$-choosability results from $\cite{mules}$ as well as some ideas for dealing with average degree in neighborhoods used in \cite{cranstonrabernclaw}.

\begin{lem}\label{MaderLemma}
For $k \geq 1$, every graph $G$ with $d(G) \geq 4k$ has a $(k+1)$-connected induced subgraph $H$ such that $d(H) > d(G) - 2k$.
\end{lem}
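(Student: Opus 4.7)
The plan is to apply the classical extremal argument used in Mader's theorem, since the conclusion is precisely its sharp form. Set $\epsilon \DefinedAs d(G)/2$ and consider the family $\mathcal{F}$ of induced subgraphs $H$ of $G$ satisfying $|V(H)| \geq 2k$ and $|E(H)| > \epsilon(|V(H)| - k)$. Since $|E(G)| = \epsilon|V(G)| > \epsilon(|V(G)| - k)$, we have $G \in \mathcal{F}$, so I can pick $H \in \mathcal{F}$ minimizing $|V(H)|$ and claim $H$ is the subgraph promised by the lemma.

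Three easy properties of $H$ fall out in order. A direct edge count shows $\binom{2k}{2} = k(2k-1) < \epsilon k$ whenever $\epsilon \geq 2k$, so $|V(H)| = 2k$ is impossible and hence $|V(H)| \geq 2k+1$. Removing any vertex $v$ of $H$ with $d_H(v) \leq \epsilon$ yields $H - v \in \mathcal{F}$ (the family inequality is preserved and $|V(H-v)| \geq 2k$), contradicting minimality; so $\delta(H) > \epsilon$ and in particular $|V(H)| > \epsilon$. Combining the family condition with this last bound gives
\[\frac{|E(H)|}{|V(H)|} > \epsilon\left(1 - \frac{k}{|V(H)|}\right) > \epsilon - k,\]
which rearranges to the desired $d(H) > d(G) - 2k$.

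The substantive step is establishing $\kappa(H) \geq k+1$. Suppose not; then $H$ has a separator $S$ with $|S| \leq k$, yielding a partition $V(H) = A \cup B$ with $A \cap B = S$, $A\setminus S, B\setminus S \neq \emptyset$, and no edges between $A\setminus S$ and $B\setminus S$. Because $\delta(H) > \epsilon \geq 2k$ and any $v \in A\setminus S$ has all its neighbors inside $A$, I get $|A| \geq 2k+2$, and likewise $|B| \geq 2k+2$. By minimality, neither $H[A]$ nor $H[B]$ lies in $\mathcal{F}$, so $|E(H[A])| \leq \epsilon(|A|-k)$ and $|E(H[B])| \leq \epsilon(|B|-k)$. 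Using $|E(H)| = |E(H[A])| + |E(H[B])| - |E(H[S])| \leq |E(H[A])| + |E(H[B])|$ and $|A|+|B| = |V(H)| + |S| \leq |V(H)| + k$ gives $|E(H)| \leq \epsilon(|V(H)| - k)$, contradicting $H \in \mathcal{F}$.

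The main subtlety is selecting the family condition correctly: the ``$-k$'' inside $\epsilon(|V(H)|-k)$ is tuned so that the slack from the separator (of size at most $k$) is absorbed in the final summation, while being weak enough to keep $G \in \mathcal{F}$ and, after the minimum-degree step forces $|V(H)| > \epsilon$, strong enough to give $d(H) > d(G) - 2k$. Everything else is bookkeeping with strict inequalities.
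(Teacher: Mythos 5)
Your proof is correct. The paper itself gives no proof of this lemma; it simply cites Mader (and Diestel for history). So there is no proof in the paper to compare against, but the statement, with the density bound $d(H) > d(G) - 2k$ (equivalently $\varepsilon(H) > \varepsilon(G) - k$), is exactly the version of Mader's theorem in Diestel's textbook, and your argument is a faithful reconstruction of the standard extremal proof: take an induced subgraph $H$ of $G$ of smallest order subject to $|V(H)| \geq 2k$ and $|E(H)| > \epsilon(|V(H)| - k)$, show $|V(H)| > 2k$, show $\delta(H) > \epsilon$ (else delete a low-degree vertex), deduce the average-degree bound from $|V(H)| > \epsilon$, and rule out any separator $S$ with $|S| \leq k$ by applying minimality to the two sides of the separation and observing that the edge counts add up to at most $\epsilon(|V(H)| + |S| - 2k) \leq \epsilon(|V(H)| - k)$. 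Each step checks out: in particular the bound $|A| \geq 2k+2$ (so $H[A]$ is a proper induced subgraph with at least $2k$ vertices, legitimizing the minimality comparison) follows correctly from $\delta(H) > \epsilon \geq 2k$ together with the fact that all neighbours of a vertex of $A \setminus S$ lie in $A$. The only cosmetic remark is that the lemma's ``extension'' language in the paper is a bit misleading---as your proof shows, the statement is precisely Mader's theorem as it appears in Diestel, not a genuine strengthening.
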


\begin{lem}\label{LowVertexHighAverageDegree}
If $B$ is a graph with $d(B) \geq \omega(B) + 2$, then $B$ has an induced subgraph $H$ such that $\join{K_1}{H}$ is $f$-choosable where $f(v) \geq d(v)$ for the $v$ in the $K_1$ and $f(x) \geq d(x) - 1$ for $x \in V(H)$.
\end{lem}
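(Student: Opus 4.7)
The plan is to apply Mader's extension (Lemma \ref{MaderLemma}) to extract a $2$-connected induced subgraph $H \subseteq B$ with $d(H) > \omega(H)$, and then verify that $\join{K_1}{H}$ satisfies the claimed $f$-choosability by combining the Small Pot Lemma with the $d_1$-choosability technology from \cite{mules}. Since $\omega(B) \geq 2$ (otherwise $B$ is edgeless, contradicting the hypothesis), $d(B) \geq \omega(B) + 2 \geq 4$, so Lemma \ref{MaderLemma} with $k = 1$ yields a $2$-connected induced subgraph $H \subseteq B$ with $d(H) > d(B) - 2 \geq \omega(B) \geq \omega(H)$. In particular, $H$ is neither $K_n$ (for which $d = \omega - 1$) nor an odd cycle (for which $d = \omega = 2$), so $H$'s unique block is not a Gallai-tree block; by the classical ERT theorem, $H$ is $d_0$-choosable.

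To establish $f$-choosability of $\join{K_1}{H}$, let $v_0$ be the apex and take any list assignment $L$ with $|L(v_0)| \geq |V(H)|$ and $|L(x)| \geq d_H(x)$ for $x \in V(H)$. Suppose for contradiction that $L$ admits no proper coloring. By the Small Pot Lemma, we may assume the pot $P \DefinedAs \bigcup_u L(u)$ is small, which (given $|L(v_0)| \geq |V(H)|$) forces $L(v_0) = P$ and $|P| \leq |V(H)|$. For each $c \in P$, assigning $v_0 \mapsto c$ leaves $H$ with reduced lists $L_c(x) \DefinedAs L(x) \setminus \{c\}$ of size $\geq d_H(x) - 1$ that (by assumption) admit no proper coloring. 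Thus for every $c \in P$, the pair $(H, L_c)$ lies in the obstruction class from the $d_1$-choosability classification of \cite{mules}. However, because $H$ is $2$-connected with $d(H) > \omega(H)$, these rigid obstruction conditions cannot hold simultaneously for every $c \in P$: the $2$-connectivity eliminates Gallai-tree obstructions, while the strict density $d(H) > \omega(H)$ eliminates the densest remaining obstruction shapes. This contradiction gives the result.

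The main obstacle is the final step: pinning down the exact consequence of the $d_1$-choosability classification in \cite{mules} that rules out simultaneous failure across all $c \in P$, given only $2$-connectivity of $H$ and $d(H) > \omega(H)$. The leverage comes from the fact that $|P| \geq |V(H)|$ forces obstruction under many distinct color-removals on the same underlying $H$; making this rigidity incompatible with $H$'s structural hypotheses requires careful bookkeeping using the bad-graph family from \cite{mules}, analogous to how the obstruction family is ruled out in the claw-free setting of \cite{cranstonrabernclaw}.
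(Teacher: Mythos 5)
Your setup is correct and matches the paper exactly: apply Lemma \ref{MaderLemma} with $k=1$ to extract a $2$-connected induced subgraph $H$ with $d(H) > d(B) - 2 \geq \omega(B)$, and observe that $H$ is $d_0$-choosable because a $2$-connected graph that is not an odd cycle or complete cannot be a Gallai tree. (Your sub-argument that $d(H) > \omega(H)$ rules out both exceptional block types is the same as the paper's.)

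The gap is the final step, which you acknowledge you have not filled. The difficulty is that the route you sketch is not available: there is no usable ``obstruction class'' for arbitrary non-$d_1$-choosable graphs that you could check $(H, L_c)$ against color by color. The results from \cite{mules} that the paper cites (e.g.\ Lemma \ref{K_tClassification}) classify when \emph{joins} $\join{K_t}{B}$ fail to be $d_1$-choosable; they do not classify all graphs that fail to be degree-minus-one choosable, which is a much harder and essentially open problem. So the appeal to ``the bad-graph family from \cite{mules}'' does not close the argument.

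The ingredient you are missing is Lemma \ref{LowSinglePair}: for a \emph{minimal} bad $f$-assignment $L$ on $\join{K_1}{H}$ with $H$ being $d_0$-choosable, $f(v) \geq d(v)$ on the apex, and $f(x) \geq d(x) - 1$ on $H$, every nonadjacent pair in $H$ has disjoint lists. This single structural fact immediately says that the vertices of $H$ whose lists contain a fixed color $c$ form a clique, hence at most $\omega(H)$ of them. Combining this with the Small Pot Lemma bound $\card{Pot(L)} \leq \card{H}$ gives
\[\card{H}\, d(H) = \sum_{v \in V(H)} d_H(v) \leq \sum_{v \in V(H)} \card{L(v)} \leq \card{Pot(L)}\,\omega(H) \leq \card{H}\,\omega(H),\]
so $d(H) \leq \omega(H) \leq \omega(B) < d(H)$, a contradiction. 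Your instinct that the small pot plus $2$-connectivity plus $d(H) > \omega(H)$ should produce a contradiction is right, but the mechanism is this single disjoint-lists lemma followed by a degree count, not a case analysis over the colors of the apex.
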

\begin{proof}
Let $B$ be such a graph.  Applying Lemma \ref{MaderLemma} with $k \DefinedAs 1$, we get a $2$-connected subgraph $H$ of $B$ with $d(H) > d(B) - 2 \geq \omega(B)$.  Since $H$ is $2$-connected, if it is not $d_0$-choosable, then it is either an odd cycle or complete.  The former is impossible since $d(H) \geq 3$, hence $H$ would be complete and we'd have the contradiction $\omega(H) > \omega(B)$.  Hence $H$ is $d_0$-choosable.  

Suppose $\join{K_1}{H}$ isn't $f$-choosable and let $L$ be a minimal bad $f$-assignment on $\join{K_1}{B}$.  
By Lemma \ref{LowSinglePair}, no nonadjacent pair in $H$ have intersecting lists and hence we must have $\sum_{v \in V(H)} \card{L(v)} \leq \card{Pot(L)}\omega(H)$.  Since for each $v \in V(H)$ we have $\card{L(v)} \geq d_H(v)$ and by the Small Pot Lemma we have $\card{Pot(L)} \leq \card{H}$, we must have $d(H) \leq \omega(H) \leq \omega(B) < d(H)$, a contradiction.
\end{proof}

\begin{lem}\label{VertexHighAverageDegree}
If $B$ is a graph with $d(B) \geq \omega(B) + 3$, then $B$ has an induced subgraph $H$ such that $\join{K_1}{H}$ is $d_1$-choosable.
\end{lem}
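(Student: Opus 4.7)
The plan is to mirror the proof of Lemma \ref{LowVertexHighAverageDegree}, substituting the $d_1$-choosability machinery of \cite{mules} for the $d_0$-tools used there and spending the extra slack provided by the stronger hypothesis $d(B) \geq \omega(B)+3$.

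First I would apply Lemma \ref{MaderLemma} with $k := 1$ to $B$, obtaining a $2$-connected induced subgraph $H$ with $d(H) > d(B) - 2 \geq \omega(B) + 1 \geq \omega(H) + 1$. As in Lemma \ref{LowVertexHighAverageDegree}, the $d_0$-choosability of $H$ follows from the Classification of $d_0$-choosable graphs: $H$ is $2$-connected with $d(H) > 2$, so it is not an odd cycle, and $d(H) > \omega(H) + 1$ forces $H$ not to be complete.

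Next, suppose for contradiction that $\join{K_1}{H}$ is not $d_1$-choosable, and let $L$ be a minimal bad $d_1$-assignment on $\join{K_1}{H}$. Then $\card{L(v_0)} = \card{V(H)} - 1$ for the $K_1$-vertex $v_0$, and $\card{L(x)} = d_H(x)$ for each $x \in V(H)$. The central step is to show that no pair of nonadjacent vertices of $H$ shares a list-color. I expect this to follow from the $d_1$-analog of Lemma \ref{LowSinglePair}; the extra requirement over the $f$-choosable statement used in Lemma \ref{LowVertexHighAverageDegree} is what makes this the main technical hurdle, since the precoloring reduction used to contradict minimality now has less list-size slack to absorb the color being removed from $L(v_0)$.

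Finally, since each color of $Pot(L)$ sits in the lists of a clique of $\join{K_1}{H}$, one gets $\sum_v \card{L(v)} \leq (\omega(H)+1)\card{Pot(L)}$; combining the lower bounds on $\card{L(v)}$ with the Small Pot Lemma bound $\card{Pot(L)} \leq \card{V(H)} + 1$ yields $d(H) \leq \omega(H) + (\omega(H)+2)/\card{V(H)}$. Together with the inequality $d(H) > \omega(H) + 1$ from the first step, together with a direct check of the small-$\card{V(H)}$ cases (where $H$ is forced to within one vertex of complete, already ruled out), this produces the desired contradiction. The hardest step is the disjoint-lists claim; everything else is routine counting or an appeal to existing results.
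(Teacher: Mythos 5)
Your plan diverges from the paper's proof at the crucial step, and the divergence is a genuine gap, not a stylistic difference. You propose to show that \emph{no} nonadjacent pair in $H$ has intersecting lists, appealing to ``the $d_1$-analog of Lemma \ref{LowSinglePair}.'' But no such analog exists, and it cannot: Lemma \ref{LowSinglePair} is stated with the $K_1$-vertex $v_0$ having $f(v_0) \geq d(v_0) = \card{H}$, which is exactly what lets one discard a color from the bad pair and still contradict minimality. In a minimal bad $d_1$-assignment, $\card{L(v_0)} = \card{H} - 1$, and this loss of one color is not a ``technical hurdle'' to be absorbed --- it is precisely the obstruction, and the paper's proof works \emph{around} it rather than through it. The lemma available in the $d_1$ setting, Lemma \ref{NeighborhoodPotShrink}, does not say the lists must be disjoint; it says that \emph{if} some nonadjacent pair intersects, \emph{then} $\card{Pot(L)} \leq \card{H} - 1$, which is a much weaker (and correct) conclusion.

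The paper's actual argument accepts that nonadjacent pairs can share colors. It splits into cases: if all nonadjacent pairs have disjoint lists, the counting from Lemma \ref{LowVertexHighAverageDegree} gives a contradiction; otherwise Lemma \ref{NeighborhoodPotShrink} shrinks the pot to $\card{H}-1$, and then Lemma \ref{IntersectionsInB} is invoked to control how much sharing is possible. That lemma forces $\alpha(H_c) \leq 2$ for every color $c$, and part (2) essentially says at most one ``special'' color can live on many disjoint nonadjacent pairs, which caps $\sum_c \card{H_c}$ just tightly enough to contradict $d(H) > \omega(H) + 1$. This delicate case analysis is the reason the hypothesis is $d(B) \geq \omega(B) + 3$ rather than the $\omega(B) + 2$ of Lemma \ref{LowVertexHighAverageDegree}; your proposal, if it worked, would prove the lemma with the weaker hypothesis $\omega(B) + 2$, another sign something is off. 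Separately, your pot bound should be $\card{Pot(L)} \leq \card{H}$, not $\card{H}+1$: the Small Pot Lemma gives strict inequality against $\card{\join{K_1}{H}} = \card{H} + 1$.
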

\begin{proof}
Let $B$ be such a graph.  Applying Lemma \ref{MaderLemma} with $k \DefinedAs 1$, we get a $2$-connected subgraph $H$ of $B$ with $d(H) > d(B) - 2 \geq \omega(B) + 1$.  As in the proof of Lemma \ref{LowVertexHighAverageDegree}, we see that $H$ is $d_0$-choosable. Suppose $\join{K_1}{H}$ is not $d_1$-choosable and let $L$ be a minimal bad $d_1$-assignment on $\join{K_1}{H}$.  Combining Lemma \ref{NeighborhoodPotShrink} with the same argument as in the proof of Lemma \ref{LowVertexHighAverageDegree} shows that $\card{Pot(L)} \leq \card{H}-1$.

Now, for $c \in Pot(L)$, we consider how big the color graphs $H_c$ can be.  All of the information comes from Lemma \ref{IntersectionsInB}. We have $\alpha(G_c) \leq 2$ for all $c \in Pot(L)$. First, suppose we have $c \in Pot(L)$ such that $\card{H_c} \geq \omega(H) + 3$.  Then, using Lemma \ref{IntersectionsInB}, we see that $\card{H_{c'}} \leq \omega(H)$ for all $c' \in Pot(L) - c$ and hence $\sum_{\gamma \in Pot(L)} \card{H_\gamma} \leq \card{H} + \parens{\card{Pot(L)} - 1}\omega(H) \leq \card{H}\omega(H) + \card{H} - 2\omega(H)$.  Now suppose we have $c \in Pot(L)$ such that $\card{H_c} = \omega(H) + 2$.  Then, using Lemma \ref{IntersectionsInB} again, we see that $\card{H_{c'}} \leq \omega(H) + 1$ for all $c' \in Pot(L) - c$ and hence $\sum_{\gamma \in Pot(L)} \card{H_\gamma} \leq 1 + \card{Pot(L)}(\omega(H) + 1) \leq \card{H}\omega(H) + \card{H} - \omega(H)$.

Therefore we must have $2\size{H} \leq \card{H}(\omega(H) + 1) - \omega(H)$ and hence $d(H) \leq \omega(H) + 1 < d(H)$, a contradiction.
\end{proof}

\begin{thm}\label{BKdense}
Every graph $G$ with $\omega(G) < \Delta(G)$ such that $d(G_v) \geq \frac23\Delta(G) + 4$ for each $v \in V(G)$ is $(\Delta(G)-1)$-colorable.
\end{thm}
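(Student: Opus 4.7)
The plan is to reduce to Theorem~\ref{TwoThirdsCliqueCor} by showing that every vertex of $G$ lies in a clique on at least $\tfrac{2}{3}\Delta(G)+2$ vertices; then $K_{\Delta(G)} \subseteq G$ will contradict $\omega(G) < \Delta(G)$. First observe that $d(G_v) \leq d(v)-1 \leq \Delta(G)-1$, so the hypothesis forces $\Delta(G) \geq 15$, and in particular Brooks' theorem gives $\chi(G) \leq \Delta \DefinedAs \Delta(G)$. Assuming for contradiction that $\chi(G) = \Delta$, we take $G$ to minimize $\card{V(G)}+\size{G}$ among counterexamples; as in the opening of the proof of Lemma~\ref{MainBKLemma}, we will argue that $G$ is vertex-critical, so that $G \in \D_1$: given any induced $d_1$-choosable subgraph $J$, we $(\Delta-1)$-colour $G - V(J)$ by vertex-criticality and then extend via $d_1$-choosability, since each $u \in V(J)$ has $\card{N_G(u) \setminus V(J)} \leq \Delta - d_J(u)$ and hence a list of size at least $d_J(u)-1$, contradicting $\chi(G) = \Delta$.

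The key claim is that $\omega(G_v) \geq \tfrac{2}{3}\Delta+1$ for every $v$; then $\set{v}$ together with a maximum clique of $G_v$ is a clique of size at least $\tfrac{2}{3}\Delta+2$ in $G$, and Theorem~\ref{TwoThirdsCliqueCor} produces the forbidden $K_\Delta$. To prove the claim, fix $v$ and suppose instead $\omega(G_v) \leq \tfrac{2}{3}\Delta$. Then
\[
d(G_v) \;\geq\; \tfrac{2}{3}\Delta+4 \;\geq\; \omega(G_v)+4 \;\geq\; \omega(G_v)+3,
\]
and Lemma~\ref{VertexHighAverageDegree} applied to $B \DefinedAs G_v$ produces an induced subgraph $H$ of $G_v$ with $\join{K_1}{H}$ being $d_1$-choosable. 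Taking the $K_1$ to be $v$ itself, the subgraph $G\brackets{\set{v} \cup V(H)} = \join{\set{v}}{H}$ is an induced $d_1$-choosable subgraph of $G$, contradicting $G \in \D_1$.

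The main obstacle will be the vertex-criticality step: unlike in Lemma~\ref{MainBKLemma}, the BKdense hypothesis is not obviously inherited by $G-v$, so the implication ``$\chi(G-v) = \Delta$ forces $G-v$ to be a smaller counterexample'' does not go through verbatim. We plan to resolve this by analysing how BKdense can fail for $G-v$: either $\omega(G-v) = \Delta(G-v)$, which feeds directly into the Theorem~\ref{TwoThirdsCliqueCor} reduction, or the neighbourhood density drops at some $w \sim v$, which requires $v$ to occupy a dense portion of $G_w$ and therefore produces a large clique through $v$ directly. In either case we recover the structure needed to invoke Theorem~\ref{TwoThirdsCliqueCor} and finish.
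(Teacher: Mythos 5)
Your reduction to Theorem~\ref{TwoThirdsCliqueCor} via Lemma~\ref{VertexHighAverageDegree} is the right core idea and does match the paper. But the gap you flag at the end---establishing that the minimal counterexample $G$ is vertex-critical so that $G \in \D_1$---is genuine, and your proposed resolution does not close it. You would need to show that $\chi(G-v) = \Delta$ forces $G-v$ to violate one of the BKdense hypotheses in a way that yields a large clique through $v$, but neither branch of your case analysis delivers this. In the branch $\omega(G-v) = \Delta(G-v)$, if $\Delta(G-v) = \Delta-1$ you get only a $K_{\Delta-1}$ in $G-v$ and no contradiction with $\omega(G) < \Delta$. In the density-drop branch, $d\bigl((G-v)_w\bigr) < d(G_w)$ for some $w \sim v$ unwinds (by a short computation) to $d_{G_w}(v) > \tfrac12 d(G_w) \geq \tfrac13\Delta + 2$, i.e.\ $v$ and $w$ share a bit more than $\tfrac13\Delta$ common neighbours; that is nowhere near the $\tfrac23\Delta$-sized clique you need. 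And if $\Delta(G-v) < \Delta$ the density threshold itself drops, complicating the accounting further.

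The paper avoids this inheritance problem entirely. Instead of arguing that $G$ itself is vertex-critical, it passes to an arbitrary $\Delta$-vertex-critical induced subgraph $H \subseteq G$. Such an $H$ exists because $\chi(G) = \Delta$ (Brooks plus $\omega(G)<\Delta$), and it automatically satisfies $\delta(H) \geq \Delta - 1$, $\Delta(H) = \Delta$, $\chi(H) = \Delta$, and $H \in \D_1$---no minimality of $G$ needed. The price is that the hypothesis $d(H_v) \geq \tfrac23\Delta + 4$ can fail, but only slightly: if $d(H_v) < d(G_v)$ then $d_H(v) = \Delta - 1$ (so exactly one neighbour of $v$ was dropped) and $d(H_v) > d(G_v) - 1 \geq \tfrac23\Delta + 3$. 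The paper then invokes Lemma~\ref{LowVertexHighAverageDegree}, a companion to Lemma~\ref{VertexHighAverageDegree} that only requires $d(B) \geq \omega(B) + 2$ (rather than $+3$) and is tailored precisely to the situation where the apex vertex of the join has one fewer colour available, i.e.\ is a low-degree vertex of $H$. If instead $d(H_v) = d(G_v)$, Lemma~\ref{VertexHighAverageDegree} applies directly as in your claim step. Either way one gets $\omega(v) > \tfrac23\Delta + 2$ for every $v$, contradicting the vertex produced by Theorem~\ref{TwoThirdsCliqueCor}. So your claim step is sound and essentially the paper's, but you should replace the minimal-counterexample/vertex-criticality setup with the critical-subgraph device and the two-lemma case split on whether the density survives the passage to $H$.
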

\begin{proof}
Suppose note and let $G$ be a counterexample. Put $\Delta \DefinedAs \Delta(G)$.  Let $H$ be a $\Delta$-vertex-critical induced subgraph of $G$.  Then $\delta(H) \geq \Delta - 1$ and $H$ has no $d_1$-choosable induced subgraphs. By Theorem \ref{TwoThirdsCliqueCor}, we must have $v \in V(H)$ with $\omega(v) < \frac23\Delta + 2$. Suppose $d(H_v) < d(G_v)$. Then $d_H(v) = \Delta - 1$ and $\size{H_v} \geq \size{G_v} - (\Delta - 1)$; therefore, $d(H_v) > d(G_v) - 1 \geq \frac23\Delta + 3$.  Applying Lemma \ref{LowVertexHighAverageDegree} gives $\omega(v) > d(H_v) - 1 \geq \frac23\Delta + 2$, a contradiction.

Hence we must have $d(H_v) = d(G_v) \geq \frac23\Delta + 4$.  Applying Lemma \ref{VertexHighAverageDegree} gives $\omega(v) > d(H_v) - 2 \geq \frac23\Delta + 2$, a contradiction.
\end{proof}

\subsection{Bounding the order and independence number}
\begin{lem}\label{Onesies}
Let $G$ be a vertex critical graph with $\chi(G) = \Delta(G) + 1 - k$.  For every $v \in V(G)$ there is $H_v \unlhd G_v$ with:
\begin{enumerate}
\item $\card{H_v} \geq \Delta(G) - 2k$; and
\item $\delta(H_v) \geq \card{H_v} - (k+1)(\alpha(G) - 1) - 1$; and
\item $\size{H_v} \geq \card{H_v}\parens{\card{H_v}- (k+2)} - (k+1)\parens{\card{G} + 2k - (\Delta(G) + 1)}$.
\end{enumerate}
\end{lem}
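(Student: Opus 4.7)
Proof plan. Choose a proper $(\Delta-k)$-coloring $\pi$ of $G-v$, which is available by vertex-criticality; since $\pi$ does not extend to $v$, every one of the $\Delta-k$ colors appears on $N(v)$, and because $|N(v)|\leq\Delta$ at most $k$ of those color classes meet $N(v)$ more than once. Let $O_v\subseteq N(v)$ be the onesies, i.e., the vertices whose $\pi$-color is the unique representative of its class on $N(v)$; then $|O_v|\geq\Delta-2k$. Set $H_v:=G[O_v]$, which gives property (1) directly.

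For property (2), fix $u\in O_v$ with color $c_u$ and consider any non-neighbor $w\in O_v\setminus N[u]$ with color $c_w$. Let $K$ be the $c_uc_w$-Kempe chain of $\pi$ through $u$. Then $w\in K$: otherwise swapping colors along $K$ would leave $v$ with no $c_u$-colored neighbor (the only $c_u$-vertex of $N(v)$ was $u$, now recolored $c_w$, while the only $c_w$-vertex of $N(v)$, namely $w$, lies outside $K$), extending $\pi$ to a $(\Delta-k)$-coloring of $G$ and contradicting $\chi(G)=\Delta+1-k$. Walking one edge along $K$ from $u$ thus produces a $c_w$-colored neighbor $y_w$ of $u$; uniqueness of $w$ in $N(v)\cap\pi^{-1}(c_w)$ forces $y_w\in V(G)\setminus N[v]$. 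Distinct colors give an injection $w\mapsto y_w$, so the set $Y:=\{y_w:w\in O_v\setminus N[u]\}\subseteq N(u)\cap(V\setminus N[v])$ satisfies $|Y|=|O_v\setminus N[u]|$, and since $v$ is non-adjacent to $Y$ we get $\alpha(G[Y])\leq\alpha(G)-1$.

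The main obstacle is the dual bound $|N(y)\cap O_v|\leq k+1$ for every $y\in V\setminus N[v]$ whose color is a onesie color (the only case needed below). I would prove it by contradiction: if such a $y$ had $k+2$ onesie neighbors $u_1,\ldots,u_{k+2}$ of distinct colors $c_1,\ldots,c_{k+2}$, then for each $i$ the $c_{\pi(y)}c_i$-Kempe chain at $y$ must contain the unique $c_{\pi(y)}$-onesie $z_{\pi(y)}$ of $N(v)$ (else a swap frees $c_i$ for $v$). This forces $z_{\pi(y)}$ to have a $c_i$-colored neighbor in that chain for each $i$, producing $k+2$ distinct-colored neighbors which, together with the $\Delta-k-1$ other colors that a onesie must see, exceed the extras budget $d(z_{\pi(y)})-(\Delta-k)\leq k$, a contradiction. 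Granted this, a degree analysis of $Y$ via the same Kempe bookkeeping gives $\chi(G[Y])\leq k+1$, and then $|O_v\setminus N[u]|=|Y|\leq\chi(G[Y])\cdot\alpha(G[Y])\leq(k+1)(\alpha(G)-1)$ delivers (2).

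For property (3), the same onesie-neighborhood bound feeds a double count of non-edges of $H_v$: charge each non-edge $\{u,w\}$ to a first-intermediate $y\in V\setminus N[v]$ of its Kempe path (with $\pi(y)=c_w$, which forces $w=z_{\pi(y)}$), and observe that each $y$ absorbs at most $|N(y)\cap O_v|\leq k+1$ such charges. Summing over $y$ and using $|V\setminus N[v]|\leq|G|+2k-\Delta-1$ (which follows from $d(v)\geq\Delta-k$) bounds the non-edges of $H_v$ by $(k+1)(|G|+2k-\Delta-1)$, and this rearranges to the claimed lower bound on $\|H_v\|$ after absorbing the minor quadratic slack in $|H_v|$ into the boundary term.
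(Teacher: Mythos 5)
Your part (1) matches the paper exactly. For parts (2) and (3) you diverge from the paper's route: the paper fixes an onesie $x$, and for each non-edge $xy$ in $H_v$ locates a \emph{proxy} $z_{xy}\in\pi^{-1}(\pi(x))-x$ (a vertex of $x$'s color with at least two $\pi(y)$-colored neighbors) that proxies for at most $k+1$ of the non-edges $xw$; counting proxies against $|\pi^{-1}(\pi(x))|-1\le\alpha(G)-1$ gives (2) directly and summing over $x$ with a count of color-class sizes gives (3). You instead collect the opposite-colored ``first intermediates'' $y_w$ into a set $Y\subseteq N(u)\setminus N[v]$ and try to bound $|Y|$ via $\chi(G[Y])\cdot\alpha(G[Y])$. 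This is a genuinely different strategy, and it has two gaps.

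First, the argument you give for the ``dual bound'' $|N(y)\cap O_v|\le k+1$ does not close. You correctly show that $z_{\pi(y)}$ lies in each $\pi(y),c_i$-chain through $y$ and therefore has some $c_i$-colored neighbor for each $i\in[k+2]$. But $z_{\pi(y)}$ (being the unique $\pi(y)$-colored neighbor of $v$) must see \emph{every} color anyway, so having a neighbor of each color $c_i$ is not an \emph{extra} neighbor and does not consume the budget $d(z_{\pi(y)})-(\Delta-k)\le k$. You would need two $c_i$-colored neighbors for each $i$ to get a contradiction, and the chain-membership of $z_{\pi(y)}$ only supplies one. As written, the count you present is exactly the number of neighbors $z_{\pi(y)}$ is already obliged to have, and no contradiction is produced.

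Second, even if the dual bound held, the step ``a degree analysis of $Y$ via the same Kempe bookkeeping gives $\chi(G[Y])\le k+1$'' is unsubstantiated and appears to be the wrong tool. The dual bound constrains $|N(y)\cap O_v|$, but $Y$ is disjoint from $O_v$ (every $y_w\notin N[v]$ while $O_v\subseteq N(v)$), so it does not bound the degree of a vertex of $Y$ \emph{inside} $Y$. Moreover, bounding $|Y|$ by $\chi(G[Y])\alpha(G[Y])$ is only useful if you already know $\chi(G[Y])$ is small; if $Y$ happened to induce a clique on $k+2$ vertices you would get $\chi(G[Y])=k+2$, which is consistent with everything you established about $Y$ but ruins the estimate when $\alpha(G)=2$. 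The paper avoids this circularity by bounding, for each non-edge, the load carried by a single proxy in $\pi^{-1}(\pi(x))-x$, which pins the count directly to the (independent) color class and hence to $\alpha(G)-1$. The same issue propagates into your sketch of (3), which charges non-edges to intermediates $y$ and again invokes the unproven dual bound.

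So the overall strategy is sound in spirit but the two load-bounding steps it rests on are not proved, and the first of them has a concrete flaw in the counting.
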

\begin{proof}
Put $\Delta \DefinedAs \Delta(G)$. Pick $v \in V(G)$ and let $\pi$ be a $(\Delta - k)$-coloring of $G-v$.  Let $H_v$ be the subgraph of $G_v$ induced on $\setb{x}{N(v)}{\pi(x) \not \in \pi(N(v) - x)}$.  Plainly, $\card{H_v} \geq \Delta - 2k$.

By the usual Kempe chain argument, any $x, y \in V(H_v)$ must be in the same component of $C_{x,y} \DefinedAs G[\pi^{-1}(\pi(x)) \cup \pi^{-1}(\pi(y))]$.  Thus if $xy \not \in E(G)$, there must be a path of length at least $3$ in $C_{x,y}$ from $x$ to $y$ and hence some vertex of color $\pi(x)$ other than $x$ must have at least two neighbors of color $\pi(y)$ and some vertex of color $\pi(y)$ other than $y$ must have at least two neigbhors of color $\pi(x)$.  We say that such an intermediate vertex \emph{proxies} for $xy$.  Each $xy$ with $y \in V(H_v)$ must have some proxy $z_{xy} \in \pi^{-1}(\pi(x)) - x$ such that $z_{xy}$ proxies for at most $k+1$ total $xw$ with $w \in V(H_v)$, for otherwise we could recolor all of $xy$'s proxies, swap $\pi(x)$ and $\pi(y)$ in $x$'s component of $C_{x,y}$ and then color $v$ with $\pi(x)$ to get a $(\Delta-k)$-coloring of $G$.  We conclude that $x$ has at most $(k+1)(\card{\pi^{-1}(\pi(x))} - 1)$ non-neighbors in $H_v$.  This gives (2) immediately.

For (3), note that $\card{\pi(i)} \geq 2$ for each $i \in \irange{\Delta-k} - \pi(V(H_v))$ and hence $\sum_{j \in \pi(V(H_v))} \card{\pi^{-1}(j)} \leq \card{G} - 1 - 2(\Delta - k - \card{H_v})$.  Since $\size{H_v} \geq \sum_{j  \in \pi(V(H_v))} \parens{\card{H_v} - 1 - (k+1)(\card{\pi^{-1}(j)} - 1)}$, (3) follows.
\end{proof}

\begin{thm}\label{AlphaBound}
Every graph satisfies $\chi \leq \max\set{\omega, \Delta - 1, 4\alpha}$.
\end{thm}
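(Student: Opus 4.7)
The plan is to argue by contradiction. Fix a counterexample $G$ minimizing $\card{G}$, write $\Delta \DefinedAs \Delta(G)$ and $\alpha \DefinedAs \alpha(G)$, and note that $G$ is vertex-critical, so $\delta(G) \geq \chi(G) - 1 \geq \Delta - 1$. From $\chi(G) > \Delta - 1$ together with Brooks' theorem---using $\chi > \omega$ to rule out $K_{\Delta + 1}$ and $\chi > 4\alpha$ to rule out odd cycles---one concludes $\chi(G) = \Delta$, hence $\omega \leq \Delta - 1$ and $\alpha < \Delta/4$. Two quick reductions dispatch the trivial ranges: if $\Delta \leq 8$ then either $\alpha = 1$ makes $G$ complete with $\omega = \chi$, or $\alpha \geq 2$ gives $4\alpha \geq 8 \geq \Delta \geq \chi$; and if $\alpha \leq 2$ the Beutelspacher-Hering theorem forces $\omega = \Delta$. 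So we may assume $\Delta \geq 9$ and $\alpha \geq 3$, whence $\Delta \geq 13$.

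By Theorem~\ref{TwoThirdsCliqueCor} some $v \in V(G)$ satisfies $\omega(v) < \frac23\Delta + 2$. Apply Lemma~\ref{Onesies} to $v$ with $k = 1$ to obtain $H_v \unlhd G_v$ with $\card{H_v} \geq \Delta - 2$, $\delta(H_v) \geq \card{H_v} - 2\alpha + 1$, and $\size{H_v} \geq \card{H_v}(\card{H_v} - 3) - 2(\card{G} + 1 - \Delta)$. Since each of the $\Delta$ color classes of a $\Delta$-coloring of $G$ is independent, $\card{G} \leq \alpha\Delta$; feeding this and $\card{H_v} \geq \Delta - 2$ into the edge bound and using $\alpha < \Delta/4$ should simplify to $d(H_v) \geq \frac23\Delta + 4$. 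Combined with $\omega(H_v) = \omega(v) - 1 < \frac23\Delta + 1$, this gives $d(H_v) \geq \omega(H_v) + 3$, so Lemma~\ref{VertexHighAverageDegree} produces an induced $H' \subseteq H_v$ such that $\join{K_1}{H'}$ is $d_1$-choosable.

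Because $V(H') \subseteq N(v)$, taking $v$ as the $K_1$ identifies $G[\set{v} \cup V(H')]$ with $\join{K_1}{H'}$, so the latter is an induced $d_1$-choosable subgraph of $G$. By minimality $G - (\set{v} \cup V(H'))$ has a $(\Delta-1)$-coloring $\pi$. For each $u \in \set{v} \cup V(H')$ the residual list of colors not used by $\pi$ on $N_G(u) \setminus (\set{v} \cup V(H'))$ has size at least $d_{\join{K_1}{H'}}(u) - 1$, since $d_G(u) \leq \Delta$. The $d_1$-choosability of $\join{K_1}{H'}$ then extends $\pi$ to a proper $(\Delta-1)$-coloring of all of $G$, contradicting $\chi(G) = \Delta$.

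The main obstacle is the tight arithmetic confirming $d(H_v) \geq \frac23\Delta + 4$. Using only $\card{G} < \Delta^2/4$, the direct computation succeeds cleanly for $\Delta$ reasonably large but is tight in a window of small $\Delta$. To cover the full range $\Delta \geq 13$ one will likely need to use the sharper $\alpha \leq \lfloor (\Delta - 1)/4 \rfloor$ and possibly split on whether $d_G(v)$ is $\Delta$ or $\Delta - 1$: in the latter sub-case $\card{H_v}$ can be as large as $\Delta - 1$, and the extension has a free color slot at $v$, so the weaker hypothesis $d(H_v) \geq \omega(H_v) + 2$ of Lemma~\ref{LowVertexHighAverageDegree} suffices.
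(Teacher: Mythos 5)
Your skeleton matches the paper's through the first paragraph and the invocation of Theorem~\ref{TwoThirdsCliqueCor} and Lemma~\ref{Onesies}; the divergence is which conclusion of Lemma~\ref{Onesies} you use and which list-coloring lemma you aim it at, and that divergence breaks the proof. You feed conclusion~(3) (the edge count) into Lemma~\ref{VertexHighAverageDegree}, which requires $d(H_v)\geq\omega(H_v)+3$, i.e.\ roughly $d(H_v)\gtrsim\tfrac23\Delta+3$. But conclusion~(3) together with $\card{G}\leq\alpha\Delta$ and $\alpha\leq\lfloor(\Delta-1)/4\rfloor$ yields only
\[
d(H_v)\;\geq\;2(\Delta-5)-\frac{4(\alpha\Delta+1-\Delta)}{\Delta-2}
\;\geq\;2\Delta-10-\Bigl(\Delta-3-\tfrac{2}{\Delta-2}\Bigr)\;=\;\Delta-7+\tfrac{2}{\Delta-2},
\]
and $\Delta-7\geq\tfrac23\Delta+3$ forces $\Delta\geq 30$ (and with the slack from the ceilings, $\Delta\geq 33$). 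This is not a narrow window to be closed by the floor or by splitting on $d_G(v)\in\{\Delta,\Delta-1\}$: at $\Delta=13$ with $\alpha\leq 3$ one gets $d(H_v)\gtrsim 6.2$ against a target of about $12$, and the deficit persists through roughly $\Delta\leq 32$. Shaving one off the target via Lemma~\ref{LowVertexHighAverageDegree} in the $d_G(v)=\Delta-1$ subcase does not begin to bridge it.

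The paper instead uses conclusion~(2), the \emph{minimum degree} bound $\delta(H_v)\geq\card{H_v}-2\alpha+1$, which with $\alpha\leq(\Delta-1)/4$ and $\card{H_v}\geq\Delta-2$ gives $\delta(H_v)\geq\tfrac{\card{H_v}+1}{2}$ for every $\Delta$, no large-$\Delta$ threshold. It then applies Lemma~\ref{neighborhood}, not Lemma~\ref{VertexHighAverageDegree}: since $G$ is vertex critical, $G[\set{v}\cup V(H_v)]=\join{K_1}{H_v}$ cannot be $d_1$-choosable (this is exactly the extension argument you spell out in your last paragraph, so that part of your reasoning is sound), and Lemma~\ref{neighborhood} then forces $\omega(H_v)\geq\card{H_v}-1\geq\Delta-3$, giving $\omega(v)\geq\Delta-2\geq\tfrac23\Delta+2$ once $\Delta\geq12$, a contradiction. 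So the fix is structural, not arithmetic: replace the edge-count route through Lemma~\ref{VertexHighAverageDegree} with the min-degree route through Lemma~\ref{neighborhood}, which is strictly better suited to the $\alpha$-bound since $\alpha$ controls $\delta(H_v)$ directly and no appeal to $\card{G}$ is needed.

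One smaller point: your opening reduction handling $\Delta\leq 8$ and the appeal to Beutelspacher--Hering for $\alpha\leq 2$ are fine, and match the paper's one-line ``Borodin-Kostochka holds for $\alpha=2$ and $\Delta\geq 9$'' step; the conclusion $\Delta\geq 13$ is the same.
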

\begin{proof}
Suppose not and choose a counterexample $G$ minimizing $\card{G}$.  Since none of the terms on the right side increase when we remove a vertex, $G$ is vertex critical.  Since the Borodin-Kostochka conjecture holds for graphs with $\alpha = 2$ and $\Delta \geq 9$, we must have $\alpha(G) \geq 3$ and hence $\Delta(G) \geq 13$.  By Lemma \ref{TwoThirdsCliqueCor}, there must be $v \in V(G)$ with $\omega(v) < \frac23 \Delta(G) + 2$.  Applying (2) of Lemma \ref{Onesies}, we get $H_v \unlhd G_v$ with $\card{H_v} \geq \Delta(G) - 2$ and $\delta(H_v) \geq \card{H_v} - 2\alpha(G) + 1$.  Since $\Delta(G) \geq \chi(G) \geq 4\alpha(G) + 1$, we have $\delta(H_v) \geq \card{H_v} - \frac{\Delta(G) - 1}{2} + 1 \geq \frac{\card{H_v} + 1}{2}$.  Applying Lemma \ref{neighborhood} shows that either $H_v = \join{K_3}{E_4}$ or $\omega(H_v) \geq \card{H_v} - 1$.  The former is impossible since $\Delta(G) > 9$. Therefore $\omega(v) \geq \omega(H_v) + 1 \geq \Delta(G) - 2 \geq \frac23 \Delta(G) + 2$ since $\Delta(G) \geq 12$, a contradiction.
\end{proof}

\begin{thm}\label{OrderBound}
Every graph satisfies $\chi \leq \max\set{\omega, \Delta - 1, \ceil{\frac{15 + \sqrt{48n + 73}}{4}}}$.
\end{thm}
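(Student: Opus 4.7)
The plan is to follow the template of Theorem~\ref{AlphaBound}, using part~(3) of Lemma~\ref{Onesies} (the edge-count bound involving $\card{G}$) in place of part~(2), and invoking Lemma~\ref{LowVertexHighAverageDegree} to extract a $d_1$-choosable induced subgraph.

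Suppose for contradiction that $G$ is a counterexample with $\card{G} = n$ minimum. By minimality $G$ is vertex-critical; since $\chi(G) > \omega(G)$ and $\chi(G) > \Delta(G) - 1$, Brooks' theorem forces $\chi(G) = \Delta(G) =: \Delta$ and $\omega(G) \leq \Delta - 1$. The counterexample hypothesis gives $\Delta \geq \ceil{\frac{15 + \sqrt{48n+73}}{4}} + 1$, so $\Delta - 1 \geq \frac{15 + \sqrt{48n+73}}{4}$, which squares to the target inequality
\[12n \;\leq\; 4\Delta^2 - 38\Delta + 72.\]
In particular $\Delta \geq 9$, so by Theorem~\ref{TwoThirdsCliqueCor} there is $v \in V(G)$ with $\omega(v) < \frac{2}{3}\Delta + 2$.

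Apply Lemma~\ref{Onesies} at $v$ with $k = 1$ to obtain $H_v \unlhd G_v$ with $h := \card{H_v} \geq \Delta - 2$ and $\size{H_v} \geq h(h - 3) - 2(n + 1 - \Delta)$, so
\[d(H_v) \;\geq\; 2(h - 3) - \frac{4(n + 1 - \Delta)}{h}.\]
Since $\omega(H_v) \leq \omega(v) - 1 < \frac{2}{3}\Delta + 1$, if $d(H_v) \geq \omega(H_v) + 2$ then Lemma~\ref{LowVertexHighAverageDegree} applied to $H_v$ supplies an induced $J \subseteq H_v$ for which $\join{K_1}{J}$, realized as $\join{v}{J}$ inside $G$, is $d_1$-choosable as an abstract graph: the $f$-assignment the lemma produces satisfies $f(v) \geq \card{V(J)} > d_{\join{v}{J}}(v) - 1$ at the $K_1$ vertex and $f(x) \geq d_J(x) = d_{\join{v}{J}}(x) - 1$ for $x \in V(J)$. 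Vertex-criticality of $G$ with $\chi = \Delta$ puts $G \in \D_1$ by the standard argument --- $(\Delta - 1)$-color $G$ restricted to the complement of $V(\join{v}{J})$ using criticality, then list-color $\join{v}{J}$ from the residual palettes --- a contradiction. Hence $d(H_v) < \omega(H_v) + 2 < \frac{2}{3}\Delta + 3$.

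Plugging the worst case $h = \Delta - 2$ into the displayed lower bound and combining with this upper bound gives, after elementary manipulation,
\[12n \;>\; 4\Delta^2 - 35\Delta + 66.\]
Combining with the target $12n \leq 4\Delta^2 - 38\Delta + 72$ from paragraph~2 forces $-35\Delta + 66 < -38\Delta + 72$, i.e., $\Delta < 2$, contradicting $\Delta \geq 9$. The main obstacle is arranging the thresholds so that these two quadratic inequalities actually conflict: one must use Lemma~\ref{LowVertexHighAverageDegree} (threshold $\omega + 2$) rather than the weaker Lemma~\ref{VertexHighAverageDegree} (threshold $\omega + 3$), and one must exploit that the counterexample forces the integer gap $\Delta \geq B + 1$ (not merely $\Delta > B$), which supplies the extra $-8\Delta$ in the target inequality that makes the final contradiction go through.
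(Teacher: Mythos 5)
Your step invoking Lemma~\ref{LowVertexHighAverageDegree} doesn't work as stated. That lemma produces an induced $H \subseteq B$ such that $\join{K_1}{H}$ is $f$-choosable for every $f$ with $f(v) \geq d(v)$ at the $K_1$ vertex and $f(x) \geq d(x)-1$ on $V(H)$. You claim this makes $\join{v}{J}$ ``$d_1$-choosable as an abstract graph,'' but $d_1$-choosability requires handling the \emph{smaller} list $f(v) = d(v) - 1$ at the join vertex, which falls below the lemma's guarantee; larger lists are easier to color from, so the lemma's conclusion is strictly weaker than $d_1$-choosability, and the inference does not follow. If you instead run the direct criticality argument from your parenthetical, the residual list at $v$ after $(\Delta-1)$-coloring $G - (\set{v} \cup V(J))$ has size at least $(\Delta-1) - d_G(v) + \card{J}$, which meets the lemma's requirement $f(v) \geq \card{J}$ only when $d_G(v) \leq \Delta - 1$ --- but you never establish that the $v$ supplied by Theorem~\ref{TwoThirdsCliqueCor} is low. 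Compare the proof of Theorem~\ref{BKdense}, where the paper deploys Lemma~\ref{LowVertexHighAverageDegree} only after first pinning down $d_H(v) = \Delta - 1$, and switches to Lemma~\ref{VertexHighAverageDegree} in the complementary case.

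The fix is what the paper actually does: use Lemma~\ref{VertexHighAverageDegree}, which genuinely produces a $d_1$-choosable join and therefore applies regardless of the degree of $v$. Contrary to your closing remark, the slightly worse threshold $d(H_v) < \frac{2}{3}\Delta + 4$ does suffice: it yields $6(n-1) > (2\Delta - 15)(\Delta - 2)$, equivalently $12n > 4\Delta^2 - 38\Delta + 72$, which clashes with the counterexample bound $12n \leq 4\Delta^2 - 38\Delta + 72$ with no slack at all. The constants $15$ and $73$ in the statement were chosen precisely so that the $\omega + 3$ threshold is tight, so your assertion that one ``must'' use the $\omega + 2$ lemma is backwards. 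Your algebra otherwise checks out; it is exactly this one lemma substitution (and the high/low issue it sweeps under the rug) that needs repair.
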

\begin{proof}
Suppose not and choose a counterexample $G$ minimizing $\card{G}$.  Put $\Delta \DefinedAs \Delta(G)$ and $n \DefinedAs \card{G}$. Since none of the terms on the right side increase when we remove a vertex, $G$ is vertex critical. By Lemma \ref{TwoThirdsCliqueCor}, there must be $v \in V(G)$ with $\omega(v) < \frac23 \Delta + 2$.  Applying (3) of Lemma \ref{Onesies}, we get $H_v \unlhd G_v$ with with $\card{H_v} \geq \Delta - 2$ and $\size{H_v} \geq \card{H_v}\parens{\card{H_v}- 3} - 2\parens{n + 1 - \Delta}$.  By Lemma \ref{VertexHighAverageDegree}, we must have $d(H_v) < \frac23\Delta + 4$ and hence we have

\begin{align*}
\frac23\Delta + 4 &> 2\parens{\card{H_v}- 3} - \frac{4\parens{n + 1 - \Delta}}{\card{H_v}}\\
&\geq 2\parens{\Delta - 5} - \frac{4\parens{n + 1 - \Delta}}{\Delta - 2}.\\
\end{align*}

Simplifying a bit, we get $6(n-1) > (2\Delta - 15)(\Delta - 2)$.  Since $\Delta \geq \chi(G) \geq \frac{19 + \sqrt{48n + 73}}{4}$, we have $6(n-1) > (\frac{-11 + \sqrt{48n + 73}}{2})(\frac{11 + \sqrt{48n + 73}}{4}) = \frac{48n - 48}{8} = 6(n-1)$, a contradiction.
\end{proof}

\section{Coloring graphs when every vertex is in a big clique}

\subsection{The decomposition}
We need a partitioning result similar to Lemma \ref{partition} in the general case.  We deal with a set of pairwise intersecting $O_z$ by only using vertices in their intersection.  Since we need this intersection to be big in order to apply the independent transversal lemma, we need to limit the number of $O_z$ that can pairwise intersect.  For $k \geq 0$, let $\D_k$ be the collection of graphs without induced $d_k$-choosable
subgraphs. Again, for a graph $G$ and $t \in \IN$, we let $\CC_t$ be the maximal cliques in $G$ having at least $t$ vertices. 

We put off as much computation as possible until later, to this end, define $\fancy{U}(k, \omega, \Delta) \DefinedAs \max\set{\frac23 (\Delta + 1), \frac12 (\Delta + 3k + 2), \frac{2k}{2k+1}(\omega + k) - 1, \frac{k+1}{k+2}\omega + 2k + 1}$.

\begin{lem}\label{GeneralPartitionUniversalPrime}
Let $k \geq 1$ and suppose $G \in \D_k$. If $t \geq \fancy{U}(k, \omega(G), \Delta(G))$, then $\bigcup \CC_t$ can be partitioned into sets $D_1, \ldots, D_r$ so that for each $i \in \irange{r}$ each of the following holds:
\begin{enumerate}
\item $\card{D_i} \leq \omega(G[D_i]) + 2k$; and
\item $G[D_i]$ has at least $3k+1$ universal vertices; and
\item If $L$ is a maximum clique in $G[D_i]$, then for independent $I \subseteq D_i$ we have $\card{L \cap \bigcap_{v \in I} N(v)} \geq \card{L} - \card{I}(\card{L} + k - t)$; and
\item $G[D_i]$ has independence number at most $k+1$.
\end{enumerate}
\end{lem}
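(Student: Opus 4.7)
The plan is to generalize the intersection analysis of Lemma \ref{partition}. I would begin by studying how two cliques $C, C' \in \CC_t$ can intersect. If $C \cap C' \neq \emptyset$, the bound $\card{C \cap C'} \geq \card{C} + \card{C'} - (\Delta + 1) \geq 2t - \Delta - 1$ combined with $t \geq \frac{1}{2}(\Delta + 3k+2)$ yields $\card{C \cap C'} \geq 3k+1$. Since $G \in \D_k$ and $G[C \cup C']$ is almost complete (non-edges only between $C \setminus C'$ and $C' \setminus C$), invoking a $d_k$-choosable classification of almost-complete graphs should force $\card{C \setminus C'}, \card{C' \setminus C} \leq k$ and put strong restrictions on the non-edges. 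Iterating this analysis for a family of pairwise intersecting cliques $C_1, \ldots, C_m \in \CC_t$, the thresholds $\frac{2}{3}(\Delta+1)$, $\frac{k+1}{k+2}\omega + 2k + 1$, and $\frac{2k}{2k+1}(\omega+k) - 1$ are precisely what prevent the leftover $\bigcup_j C_j - \bigcap_j C_j$ from being large enough or free enough to house a $d_k$-choosable induced subgraph.

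With the local intersection structure controlled, I would build the partition by declaring $C \sim C'$ iff $C \cap C' \neq \emptyset$, taking the components of this relation on $\CC_t$, and setting each $D_i$ to be the union $\bigcup_j C_j$ over the cliques in one component (possibly enlarged by a small number of almost-universal vertices, in analogy with the extra vertex $x_i$ appearing in Lemma \ref{partition}). The sets $D_i$ are disjoint by construction, and the $3k+1$ vertices of the common intersection produced by the first step are universal in $G[D_i]$, yielding condition (2).

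For the remaining conditions, let $L$ be a maximum clique in $G[D_i]$; the intersection analysis should force $L$ to be (or nearly equal) one of the $C_j$, so $\card{L} \geq t$. Condition (1) follows because the leftover $D_i \setminus L$ has size at most $2k$, by the pairwise-intersection bound summed over the bounded number of cliques in the component. Condition (3) reduces to showing that each $v \in D_i$ has at most $\card{L} + k - t$ non-neighbors in $L$, which I would verify case by case: $v \in L$ contributes $1 \leq \card{L}+k-t$, while $v \in C_j \setminus L$ satisfies $\card{L \setminus N(v)} \leq \card{L \setminus C_j} \leq \card{L} + k - t$ by the structure from the first step; a union bound over $I$ gives the asserted inequality. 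Condition (4) follows because any independent set in $G[D_i]$ avoids the universal common intersection and so sits entirely in the leftover, whose independence number is capped at $k+1$ by the first step (otherwise a $d_k$-choosable configuration would appear, contradicting $G \in \D_k$).

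The substantive obstacle is the very first step: pinning down exactly which almost-complete graphs fail to be $d_k$-choosable, and doing so tightly enough that the four thresholds in $\fancy{U}(k,\omega,\Delta)$ fall out naturally. Each term of $\fancy{U}$ should correspond to excluding a different extremal family of $d_k$-choosable structures, so the bookkeeping of which threshold kills which configuration is the main technical burden. A secondary difficulty is that, unlike the $k=1$ case where the leftover is at most a single vertex, for $k \geq 2$ the leftover can be a genuinely nontrivial graph with several non-edges, requiring a more careful case analysis to produce the needed $d_k$-choosable induced subgraph in each extremal case.
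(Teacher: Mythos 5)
Your overall strategy matches the paper's: form the intersection graph of $\CC_t$, use the threshold $t\geq\frac{2}{3}(\Delta+1)$ to make it a disjoint union of complete graphs, take $D_i$ to be the union of the cliques in one component, and use $d_k$-choosability non-membership lemmas to control the structure. Conditions (1) and (3) are handled essentially as in the paper: Lemma~\ref{TwoCliques} (applied to the complement-of-bipartite graph formed by two cliques after deleting a $(3k+1)$-subset of their intersection) gives $\card{Q\setminus L}\leq k$ for every $Q$ in the component, which is exactly your ``each $v\in D_i$ has at most $\card{L}+k-t$ non-neighbors in $L$'' step, and Lemma~\ref{BasicFiniteSets} provides the union bound.

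However, there is a genuine gap in your argument for condition (4). You claim that since any independent set of size at least two avoids the universal vertices, it ``sits entirely in the leftover, whose independence number is capped at $k+1$ by the first step.'' But the leftover $D_i\setminus\bigcap\fancy{F}_i$ is \emph{not} just the set $A$ of size at most $2k$: it also contains $L\setminus\bigcap\fancy{F}_i$, which can be large, and in any case the bound $\card{A}\leq 2k$ caps the \emph{size} of $A$, not its independence number by $k+1$. An independent set could in principle have one vertex in $L$ plus up to $2k$ vertices in $A$. The paper closes this gap differently: it first proves the quantitative intersection bound (your condition~(3)), and then shows that if $\card{I}=k+2$ the set $L\cap\bigcap_{v\in I}N(v)$ has at least $(k+1)(k+2)$ vertices (this is where the threshold $t\geq\frac{k+1}{k+2}\omega+2k+1$ enters), at which point Lemma~\ref{BigIndependentJoin} (that $\join{K_{(k+1)(k+2)}}{E_{k+2}}$ \emph{is} $d_k$-choosable) gives the contradiction. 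That lemma, and the step of feeding condition~(3) back in to bound $\alpha(G[D_i])$, is the ingredient your sketch is missing.

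A secondary imprecision: in establishing $\card{A}\leq 2k$ and the $(3k+1)$-vertex universal core, the mechanism in the paper is more delicate than ``iterate the pairwise analysis.'' One chooses $S\subseteq A$ of size $\min\set{2k+1,\card{A}}$ and a \emph{minimum} subfamily $\fancy{Q}$ of cliques covering $S$; the minimality forces $\card{\fancy{Q}}\leq 2k+1$, which is what the threshold $t\geq\frac{2k}{2k+1}(\omega+k)-1$ is calibrated against. Once $\card{L\cap\bigcap\fancy{Q}}\geq 3k+1$ is secured, Lemma~\ref{GeneralCliqueJoin} (the weaker bound $\omega(B)\geq\card{B}-2k$, which does not require the complement-of-bipartite hypothesis) is applied to the graph induced on $L\cup S$ minus a $(3k+1)$-clique to produce a clique of size $\card{S}+\card{L}-2k$, contradicting maximality of $L$ unless $\card{S}\leq 2k$. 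Your sketch refers to ``iterating'' pairwise intersection bounds, which does not by itself control the number of cliques needed to cover $A$.
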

\begin{proof}
Put $\Delta \DefinedAs \Delta(G)$ and $\omega \DefinedAs \omega(G)$. If $\omega < t$, then $\bigcup \CC_t$ is empty and the lemma holds vacuously.  Hence we may assume $\omega \geq t$. Let $X_t$ be the intersection graph of $\CC_t$.  Since $t \geq \frac23 (\Delta + 1)$, $X_t$ is a disjoint union of complete graphs.  Let $\fancy{F}_1, \ldots, \fancy{F}_r$ be the components of $X_t$ and put $D_i \DefinedAs \cup \fancy{F}_i$ for $i \in \irange{r}$. 

Fix $i \in \irange{r}$.  Choose $L \in \fancy{F}_i$ with $\card{L} = \omega(G[D_i])$.  Put $A \DefinedAs D_i - L$.  

\textbf{Claim 1.} \textit{$\card{A} \leq 2k$ and $\card{\bigcap \fancy{F}_i} \geq 3k+1$.} Choose $S \subseteq A$ with $\card{S} = \min\set{2k+1, \card{A}}$.  Choose $\fancy{Q} \subseteq \fancy{F}_i$ such that $S \subseteq \bigcup \fancy{Q}$ so as to minimize $\card{\fancy{Q}}$.  Then for any $Q \in \fancy{Q}$ there exists $v_Q \in S \cap \parens{Q - \bigcup \parens{\fancy{Q} - \set{Q}}}$.  In particular, $\card{\fancy{Q}} \leq \card{S} = 2k+1$.  Suppose $\card{L \cap \bigcap \fancy{Q}} \geq 3k+1$.  Then by Lemma \ref{GeneralCliqueJoin} there is a clique with at least $\card{S} + \card{L} - 2k$ vertices intersecting $L$.  Since $L$ is a maximum size clique in $D_i$ we must have $\card{A} \leq \card{S} \leq 2k$.  But then $A = S$ and $\bigcap \fancy{F}_i = L \cap \bigcap \fancy{Q}$ since $A \subseteq \bigcup \fancy{Q}$.

Therefore to prove the claim it suffices to show that $\card{L \cap \bigcap \fancy{Q}} \geq 3k+1$.  If $\fancy{Q} = \emptyset$, then we are done since $\card{L} \geq 3k+1$. Hence $\card{\fancy{Q}} \geq 1$. For any $Q \in \fancy{Q}$, we have $\card{Q \cap L} \geq 3k+1$ since $t \geq \frac12 (\Delta + 3k + 2)$.  Hence we may apply Lemma \ref{TwoCliques} to get $\card{Q-L} + \card{L} - k \leq \card{L}$ and hence $\card{Q-L} \leq k$.  So we have $\card{Q\cap L} \geq \card{Q} - k$ for all $Q \in \fancy{Q}$.  If $\sum_{Q \in \fancy{Q}} \card{Q\cap L} \geq (\card{\fancy{Q}} - 1)\card{L} + 3k+1$, then applying Lemma \ref{BasicFiniteSets} gives the desired conclusion $\card{L \cap \bigcap \fancy{Q}} \geq 3k+1$.  Hence, if the claim fails, we must have
\begin{align*}
(\card{\fancy{Q}} - 1)\omega + 3k+1 &> \sum_{Q \in \fancy{Q}} \card{Q\cap L} \\
&\geq \sum_{Q \in \fancy{Q}} \parens{\card{Q} - k} \\
&\geq \card{\fancy{Q}}(t-k).
\end{align*}

Hence $t < \omega + k - \frac{\omega + 3k + 1}{\card{\fancy{Q}}} \leq \frac{2k}{2k+1}(\omega + k) - 1$ since $\card{\fancy{Q}} \leq 2k+1$, a contradiction.  This proves (1) and (2).

\textbf{Claim 2.} \textit{for independent $I \subseteq D_i$ we have $\card{L \cap \bigcap_{v \in I} N(v)} \geq \omega(G[D_i]) - \card{I}(\omega(G[D_i]) + k - t)$.}  For each $v \in I$ pick $Q_v \in \fancy{F}_i$ containing $v$ and put $\fancy{Q} \DefinedAs \setbs{Q_v}{v \in I}$. Note that $L \cap \bigcap_{v \in I} N(v) = L \cap \bigcap \fancy{Q}$.  As in the proof of Claim 1, we see that $\card{Q\cap L} \geq \card{Q} - k$ for each $Q \in \fancy{Q}$.  Therefore, if the claim fails, Lemma \ref{BasicFiniteSets} shows that we must have $\card{\fancy{Q}}(t-k) < (\card{\fancy{Q}} - 1)\card{L} + \card{L} - \card{I}(\card{L} + k - t)$ and hence $t < k + \card{L} - \card{I}\frac{(\card{L} + k - t)}{\card{I}} = t$, a contradiction.

\textbf{Claim 3.} \textit{$G[D_i]$ has independence number at most $k+1$.}  Suppose not and pick independent $I \subseteq D_i$ with $\card{I} = k+2$. By Claim 2, $\card{D_i \cap \bigcap_{v \in I} N(v)} \geq \card{L} - (k+2)(\card{L} + k - t)$.  Since $t \geq \frac{k+1}{k+2}\omega + 2k + 1$, we have $\card{D_i \cap \bigcap_{v \in I} N(v)} \geq (k+1)\omega - (k+1)\card{L} + (k+2)(2k+1) - k(k+2) \geq (k+1)(k+2)$.  Now Lemma \ref{BigIndependentJoin} gives a contradiction.
\end{proof}

\subsection{Doing the recoloring}

Again, to eliminate tiresome computation, put $\fancy{U}'(k, \omega, \Delta) \DefinedAs \max\set{\frac{k+2}{k+3}\Delta + 1, \fancy{U}(k, \omega, \Delta)}$.

\begin{lem}
For $k \geq 1$ and $\gamma \in \IN$, every graph $G$ with $\Delta(G) \leq \gamma$, $\omega(G) \leq \gamma - 2k$ and $\rho(G) \leq \gamma - k - \fancy{U}'(k, \omega(G), \gamma)$ is $(\gamma-k)$-colorable.
\end{lem}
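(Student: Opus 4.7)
The plan is to lift the argument of Lemma~\ref{MainBKLemma} from $k=1$ to arbitrary $k$, using Lemma~\ref{GeneralPartitionUniversalPrime} in place of Lemma~\ref{partition}. I take a counterexample $G$ minimizing $\card{G}+\size{G}$. Because $\fancy{U}'(k,\omega,\gamma)$ is non-decreasing in $\omega$, the hypotheses pass to induced subgraphs, so minimality together with Brooks' theorem forces $G$ to be vertex-critical with $\delta(G)\geq \gamma-k$ and to lie in $\D_k$. From $\rho(G)\leq \gamma-k-\fancy{U}'(k,\omega(G),\gamma)$ and $d(v)\geq \gamma-k$ we get $\omega(v)\geq d(v)-\rho(G)\geq \fancy{U}'(k,\omega(G),\gamma)\geq \fancy{U}(k,\omega(G),\gamma)$ for every $v$, so $\bigcup \CC_t = V(G)$ with $t\DefinedAs \fancy{U}(k,\omega(G),\gamma)$. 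Lemma~\ref{GeneralPartitionUniversalPrime} then supplies a partition $D_1,\ldots,D_r$ of $V(G)$; in particular each $G[D_i]$ has at least $3k+1$ universal vertices, $\alpha(G[D_i])\leq k+1$, and intersections of its maximum clique $L_i$ with common neighborhoods of independent subsets of $D_i$ are controlled by property~(3).

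Arguing as in Lemma~\ref{MainBKLemma}, I next select a universal vertex $x$ of some $G[D_j]$ (relabeled so $j=1$) together with a neighbor $w\notin D_1$; property~(2) and the bound $\card{D_1}\leq \omega(G)+2k\leq \gamma$ make a ``low vs.\ high'' case analysis possible in the spirit of the $k=1$ proof. The edge $xw$ is critical by the same $\rho$-argument: if $\chi(G-xw)=\chi(G)$, minimality gives $\rho(G-xw)>\rho(G)$, forcing some vertex $v$ whose every maximum clique contains $xw$; but all such cliques lie in a single $D_\ell$, and containing $x$ forces $\ell=1$, contradicting $w\notin D_1$.

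With $xw$ in hand, let $\pi$ be a $(\gamma-k)$-coloring of $G-xw$ with $\pi(x)=1$ minimizing $\card{\pi^{-1}(1)}$; universality of $x$ in $G[D_1]$ forces $\pi^{-1}(1)\cap D_1=\set{x}$, and by minimality each $z\in Z\DefinedAs \pi^{-1}(1)$ has a neighbor in every color class different from $1$. Define $O_z$ as in Section~\ref{recolorsection} and partition $Z$ by containing part, writing $Z_j\DefinedAs Z\cap D_j$; since $Z$ is independent and $\alpha(G[D_j])\leq k+1$, each $\card{Z_j}\leq k+1$. For each non-empty $Z_j$ put $V_{Z_j}\DefinedAs L_j\cap \bigcap_{z\in Z_j}(N(z)\cap O_z)$. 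The aim is to pick one vertex $v_j\in V_{Z_j}$ from each non-empty $Z_j$, pairwise non-adjacent in $G$ and none adjacent to $x$; then recoloring each $z\in Z_j$ with $\pi(v_j)$ and assigning color $1$ to $x$ and to each $v_j$ yields a proper $(\gamma-k)$-coloring of $G$, contradicting the choice of $G$.

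Producing this transversal reduces to applying Lemma~\ref{SingletonSetTransversalLopsided} to the auxiliary graph $H$ on $\bigcup_j V_{Z_j}$ (with intra-group edges discarded) and forbidden set $S\DefinedAs N(x)\cap V(H)$, and this is where the main obstacle sits. A lower bound on $\card{V_{Z_j}}$ comes from combining property~(3) of Lemma~\ref{GeneralPartitionUniversalPrime} (losing at most $\card{Z_j}\parens{\card{L_j}+k-t}$ vertices as non-common-neighbors of $Z_j$) with the bound $d(z)-(\gamma-k-1)\leq k+1$ on the number of colors appearing at least twice in $N(z)$. An upper bound $d_H(y)\leq \gamma-\card{L_j}+1\leq \gamma-t+1$ holds for $y\in V_{Z_j}\subseteq L_j$, since $y$ is joined to all of $L_j\setminus\set{y}$. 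A short calculation then gives $\card{S}\leq \gamma-t+1<\card{V_{Z_j}}$, and the extra $\frac{k+2}{k+3}\gamma+1$ clause baked into $\fancy{U}'$ is calibrated precisely so that $\card{V_{Z_j}}\geq 2(\gamma-t+1)$, securing the two-sided condition $d_H(y)\leq \min\set{\tau,\card{V_{Z_j}}-\tau}$ of Lemma~\ref{SingletonSetTransversalLopsided} with $\tau\DefinedAs \gamma-t+1$. Carrying out this bookkeeping --- delicate but driven entirely by the definition of $\fancy{U}'$ --- completes the argument.
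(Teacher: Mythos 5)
The proposal follows the same overall strategy as the paper's proof (partition via Lemma~\ref{GeneralPartitionUniversalPrime}, critical edge, minimize $\card{\pi^{-1}(1)}$, then apply an independent-transversal lemma to the sets $V_a = L_a\cap\bigcap O_z$), but there is one genuine gap and a couple of looser steps worth flagging.

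The gap is in locating a universal vertex $x\in K_1$ having a neighbor $w\notin D_1$. You describe this as ``a `low vs.\ high' case analysis in the spirit of the $k=1$ proof'' justified by $\card{D_1}\leq\omega+2k\leq\gamma$. That counting works when $k=1$ because then $\delta(G)\geq\gamma-1$ and $\card{D_1}-1\leq\gamma-1$ leaves room. For $k\geq 2$ it fails: a universal vertex $x$ with $N(x)\subseteq D_1$ has $d(x)=\card{D_1}-1$, and nothing in the degree hypothesis $d(x)\geq\gamma-k$ together with $\card{D_1}\leq\gamma$ forces $d(x)>\card{D_1}-1$. The paper closes this by a list-coloring argument: if every vertex of $K_1$ had all its neighbors in $D_1$, one colors $G-D_1$ and applies Lemma~\ref{GeneralCliqueJoinLow} (with $j=\min_{x\in K_1}\Delta-d(x)$ and the $3k+1$ universal vertices serving as the $K_{3k+1}$), deriving a contradiction with the definition of $j$. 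Without invoking something like Lemma~\ref{GeneralCliqueJoinLow}, your argument does not establish the existence of the edge $xw$ on which the rest of the proof rests.

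Two smaller points. First, the paper runs a double induction (minimizing $\gamma$ first, then $\card{G}+\size{G}$) specifically to dispose of the case $\Delta(G)<\gamma$ by dropping to the pair $(\gamma-1,k-1)$; your proposal only minimizes $\card{G}+\size{G}$ and invokes Brooks, which alone does not cover $\Delta(G)<\gamma$ when $k\geq 2$. One can avoid the double induction by carrying $\gamma$ in place of $\Delta$ through all the later estimates, but that needs to be said. Second, your choice of Lemma~\ref{SingletonSetTransversalLopsided} with forbidden set $S=N(x)\cap V(H)$ is a legitimate (arguably cleaner) substitute for the paper's use of Lemma~\ref{SingletonSetTransversal}; however, the threshold $\tau\DefinedAs\gamma-t+1$ you propose is too weak to secure $\card{V_{Z_j}}\geq 2\tau$ from the lower bound one actually gets (which is roughly $2\bigl(\tfrac{\gamma}{k+3}-k\bigr)$, short of $2\tfrac{\gamma}{k+3}$). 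Using instead the tighter bound $d_H(y)\leq\rho(G)+1\leq\tfrac{\gamma}{k+3}-k$ as $\tau$ fixes the arithmetic and matches the paper's estimate.
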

\begin{proof}
Suppose the theorem fails and choose a counterexample first minimizing $\gamma$ and subject to that minimizing $\card{G} + \size{G}$. Put $\Delta \DefinedAs \Delta(G)$.  Plainly, we must have $\gamma > 2k$. Suppose $\Delta < \gamma$.  If $k' \geq 1$, then $G$ satisfies the hypotheses of the theorem with $\gamma' \DefinedAs \gamma - 1$ and $k' \DefinedAs k-1$.  By minimality of $\gamma$, $\chi(G) \leq \gamma'-k' = \gamma - k$, a contradiction.  If $k' = 0$, then Brooks' theorem gives a contradiction.  Hence $\Delta = \gamma$.  For any $v \in V(G)$ we have $\rho(G-v) \leq \rho(G)$ and hence the second minimality condition on $G$ implies that $G$ is $(\Delta + 1 - k)$-vertex-critical.

Therefore $\delta(G) \geq \Delta - k$ and $G \in \D_k$.  For any $v \in V(G)$, we have $\Delta - k - \omega(v) \leq d(v) - \omega(v) \leq \Delta - k - \fancy{U}'(k, \omega, \Delta)$ and hence $\omega(v) \geq  \fancy{U}'(k, \omega, \Delta)$. Applying Lemma \ref{GeneralPartitionUniversalPrime} with $t \DefinedAs \fancy{U}'(k, \omega, \Delta)$ we get a partition $D_1, \ldots, D_r$ of $\bigcup \CC_t = V(G)$.  For $i \in \irange{r}$, let $K_i$ be the universal vertices in $G[D_i]$, we know $\card{K_i} \geq 3k+1$.  Suppose every $x \in K_1$ has $N(x) \subseteq D_1$.  Put $j \DefinedAs \min_{x \in K_1} \Delta - d(x)$. Since $\card{K_1} \geq 3k+1$, applying Lemma \ref{GeneralCliqueJoinLow}, for every $x \in K_1$ we have $\Delta - 2j \geq \omega(G[D_i]) + 2(k-j) \geq \card{D_i} \geq d(x) + 1$, this contradicts the definition of $j$.  Thus we have $x \in K_1$ that has a neighbor $w \in V(G) - D_1$.

We claim that $xw$ is a critical edge in $G$.  Suppose otherwise that
$\chi(G - xw) = \Delta + 1 - k$.  Then by minimality of $G$ we must have $\rho(G-xw) >
\rho(G)$. Hence there is some $v \in N(x) \cap N(w)$ so that every
largest clique containing $v$ contains $xw$.  But $v$ is in some $D_j$ and all largest cliques containing $v$ are contained in $D_j$ and hence do not contain $xw$, a contradiction.  

Let $\pi$ be a $(\Delta-k)$-coloring of $G - xw$ chosen so that $\pi(x) = 1$
and so as to minimize $\card{\pi^{-1}(1)}$. Consider $\pi$ as a coloring of
$G-x$. One key property of $\pi$ we will use is that since $x$ got $1$ in the
coloring of $G - xw$ and $x \in K_1$, no vertex of $D_1 - x$ gets colored $1$ by $\pi$.

Now put $Z \DefinedAs \pi^{-1}(1)$ and for $z \in Z$, let $O_z$ be as defined
in Section \ref{recolorsection}.  By minimality of $\card{Z}$, each $z \in Z$ has at least one neighbor in every color class of $\pi$.  
Hence $z$ has two or more neighbors in at most $k+1$ of
$\pi$'s color classes. For each $z \in Z$ we have $i(z)$ such that $z \in D_{i(z)}$.  For each $a \in i(Z)$, let $L_a$ be a maximum clique in $G[D_a]$ and put $V_a \DefinedAs L_a \cap \bigcap_{z \in i^{-1}(a)} O_z$.  By Lemma \ref{GeneralPartitionUniversalPrime}, we have $\card{i^{-1}(a)} \leq k+1$ and

\begin{align*}
\card{V_a} &\geq \card{L_a} - \card{i^{-1}(a)}(\card{L_a} + k - t) - \card{i^{-1}(a)}(k + 1) \\
&= \card{L_a}(1-\card{i^{-1}(a)}) + \card{i^{-1}(a)}(t - 2k - 1) \\
&\geq \omega(1-\card{i^{-1}(a)}) + \card{i^{-1}(a)}(t - 2k - 1) \\
&= \omega + \card{i^{-1}(a)}(t - 2k - 1 - \omega) \\
&\geq \omega + (k+1)(t - 2k - 1 - \omega) \\
&\geq (k+1)(\frac{k+2}{k+3}\Delta - 2k) - k\omega \\
&\geq (k+1)(\frac{k+2}{k+3}\Delta - 2k) - k(\Delta - 2k) \\
&= \frac{(k+1)(k+2) - k(k+3)}{k+3}\Delta - 2k \\
&= 2\parens{\frac{\Delta}{k+3} - k}. \\
\end{align*}

Also $\card{D_a} \geq \card{L_a} + \card{i^{-1}(a)} - 1$. So, each $y \in V_a$ has at most $d(y) - (\card{D_a} - 1) \leq d(y) - \omega(y) - \card{i^{-1}(a)} + 2 \leq d(y) - \omega(y) + 1 \leq \Delta - k - \frac{k+2}{k+3}\Delta = \frac{\Delta}{k+3} - k$ neighbors outside $D_a$.  Now $\card{D_1} \geq t \geq \frac{k+2}{k+3}\Delta + 1$ and hence $x$ has at most $\frac{\Delta}{k+3}$ neighbors outside $D_1$.  Let $H$ be the subgraph of $G$ induced on the union of the $V_a$. Then, we may apply Lemma
\ref{SingletonSetTransversal} on $H$ to get an independent set $\set{x} \cup \set{v_z}_{z\in Z}$ where $v_z \in V_z$. Recoloring each $z \in Z$ with $\pi(z)$ and
coloring $x \cup \set{v_z}_{z \in Z}$ with $1$ gives a $(\Delta - k)$-coloring
of $G$, a contradiction.
\end{proof}

A little computation gives the following two more parsable results.

\begin{lem}\label{MainCorollary}
For $k \geq 1$ and $\gamma \in \IN$, every graph $G$ with $\Delta(G) \leq \gamma$, $\omega(G) \leq \gamma - 2k$ and $\rho(G) \leq \frac{\gamma}{2k+1} - (2k+1)$ is $(\gamma-k)$-colorable.
\end{lem}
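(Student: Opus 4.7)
The plan is to derive Lemma \ref{MainCorollary} from the preceding lemma by a purely computational reduction: show that the hypothesis $\rho(G) \leq \frac{\gamma}{2k+1} - (2k+1)$ together with $\omega \leq \gamma - 2k$ implies $\rho(G) \leq \gamma - k - \fancy{U}'(k,\omega,\gamma)$. After rearranging, the target inequality becomes
\[
\fancy{U}'(k,\omega,\gamma) \;\leq\; \frac{2k\gamma}{2k+1} + (k+1).
\]

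First I would substitute $\omega \leq \gamma - 2k$ into the two $\omega$-dependent terms of $\fancy{U}$ and compare each of the five constituent terms of $\fancy{U}'$ with the right-hand side. Four of them drop out after short arithmetic: the term $\frac{2(\gamma+1)}{3}$ because $\frac{2}{3} \leq \frac{2k}{2k+1}$ for $k \geq 1$; the term $\frac{\gamma+3k+2}{2}$ reduces to $\gamma \geq \frac{k(2k+1)}{2k-1}$, which is implied by the non-triviality condition $\gamma \geq 2k(2k+1)$ (below this range the hypothesis forces $\rho = -1$, so every neighborhood is a clique, $G$ is a disjoint union of cliques, and $\chi(G) = \omega(G) \leq \gamma - 2k$ trivially); the term $\frac{2k(\omega+k)}{2k+1}-1$ is dominated immediately after substituting the bound on $\omega$; and $\frac{(k+1)\omega}{k+2}+2k+1$ is dominated since $\frac{k+1}{k+2} \leq \frac{2k}{2k+1}$ for $k \geq 1$.

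The delicate step is the remaining term $\frac{k+2}{k+3}\gamma + 1$. Cross-multiplying gives $\frac{k+2}{k+3} - \frac{2k}{2k+1} = \frac{2-k}{(k+3)(2k+1)}$, which is nonpositive for $k \geq 2$ but strictly positive for $k = 1$. Thus the direct computation closes the argument whenever $k \geq 2$, but it fails for $k=1$ once $\gamma$ exceeds $12$. To handle $k = 1$ I would instead invoke Lemma \ref{MainBKLemma}: its hypothesis $\rho \leq \gamma/3 - 2$ is strictly weaker than our $\rho \leq \gamma/3 - 3$, and $\omega \leq \gamma - 2 < \gamma$, so Lemma \ref{MainBKLemma} delivers $(\gamma-1)$-colorability for all $\gamma \geq 9$; the small range $\gamma \leq 8$ again falls into the disjoint-cliques regime. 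The main obstacle is precisely this gap at $k = 1$: the clean computational reduction breaks down exactly there, and the sharper $d_1$-choosability machinery behind Lemma \ref{MainBKLemma} is needed to close it.
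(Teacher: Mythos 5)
Your reduction is correct, and the computation of the four tractable terms of $\fancy{U}'$ matches what the paper's ``a little computation gives'' must intend. More importantly, you have caught a real subtlety: the term $\frac{k+2}{k+3}\gamma + 1$ genuinely exceeds $\frac{2k\gamma}{2k+1}+k+1$ when $k=1$ and $\gamma > 12$, so the bare reduction from the preceding (unnamed) lemma does not close the $k=1$ case, and the paper's one-line remark hides this. Your patch is the right one: for $k=1$ and $\gamma \geq 9$, the hypothesis $\rho \leq \gamma/3 - 3$ is strictly stronger than what Lemma \ref{MainBKLemma} requires, and $\omega \leq \gamma - 2 < \gamma$, so $(\gamma-1)$-colorability follows from the sharper $d_1$-choosability machinery; for $\gamma \leq 8$ the hypothesis forces $\rho \leq -1$ (hence every component is a clique). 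This is consistent with the paper's overall architecture, where the $k=1$ regime is systematically handled by the stronger Borodin--Kostochka-specific results rather than the general $d_k$-choosability framework. One small imprecision worth fixing: the threshold below which the hypothesis forces $\rho = -1$ is $\gamma < (2k+1)^2$, not $\gamma < 2k(2k+1)$ (below $2k(2k+1)$ the hypothesis is simply vacuous); this does not affect your argument since $\frac{k(2k+1)}{2k-1} \leq 2k(2k+1) < (2k+1)^2$ for $k \geq 1$, but the bookkeeping should be stated correctly.
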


\begin{thm}\label{MainSimpleCorollary}
Let $k \geq 1$. Every graph $G$ with $\omega(G) \leq \Delta(G) - 2k$ such that every vertex is in a clique on $\frac{2k}{2k+1}\Delta(G) + 2k + 1$ vertices is $(\Delta(G)-k)$-colorable.
\end{thm}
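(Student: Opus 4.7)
The plan is to view Theorem \ref{MainSimpleCorollary} as a direct specialization of the preceding Lemma \ref{MainCorollary}, so the entire proof amounts to checking that the clique hypothesis on $G$ translates into the bound on $\rho(G)$ needed to invoke the lemma. Concretely, I would set $\gamma \DefinedAs \Delta(G)$ and then verify the three hypotheses of Lemma \ref{MainCorollary} for $G$ with this choice of $\gamma$: the bound $\Delta(G) \leq \gamma$ is trivial, the bound $\omega(G) \leq \gamma - 2k$ is exactly the first assumption, and only the condition $\rho(G) \leq \frac{\gamma}{2k+1} - (2k+1)$ requires argument.

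For the $\rho$-bound, I would fix an arbitrary $v \in V(G)$ and use the fact that by hypothesis $v$ lies in a clique of size at least $\frac{2k}{2k+1}\Delta(G) + 2k + 1$, which gives $\omega(v) \geq \frac{2k}{2k+1}\Delta(G) + 2k + 1$. Combining this with $d(v) \leq \Delta(G)$ yields
\[
d(v) - \omega(v) \leq \Delta(G) - \frac{2k}{2k+1}\Delta(G) - (2k+1) = \frac{\Delta(G)}{2k+1} - (2k+1).
\]
Taking the maximum over $v$ gives $\rho(G) \leq \frac{\gamma}{2k+1} - (2k+1)$, so Lemma \ref{MainCorollary} applies and delivers a $(\Delta(G) - k)$-coloring of $G$.

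There is essentially no obstacle here; the theorem is a clean ``application interface'' for Lemma \ref{MainCorollary}, with the substantive content (the recoloring argument, the decomposition of $\bigcup \CC_t$, and the independent transversal machinery) already shouldered by the lemma and its supporting results. The only thing to watch is the elementary algebraic identity $\Delta - \frac{2k}{2k+1}\Delta = \frac{\Delta}{2k+1}$, which is what makes the clique size $\frac{2k}{2k+1}\Delta(G) + 2k + 1$ in the hypothesis precisely match the $\rho$-budget $\frac{\gamma}{2k+1} - (2k+1)$ allowed by the lemma.
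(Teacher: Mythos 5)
Your proposal is correct and matches the paper's intent exactly: the paper simply remarks that ``a little computation'' gives both Lemma \ref{MainCorollary} and Theorem \ref{MainSimpleCorollary}, and deriving the theorem from the lemma via $\gamma \DefinedAs \Delta(G)$ and the observation $\rho(G) \leq \Delta(G) - \bigl(\tfrac{2k}{2k+1}\Delta(G) + 2k+1\bigr) = \tfrac{\Delta(G)}{2k+1} - (2k+1)$ is precisely that computation.
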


\subsection{Dense neighborhoods}
\begin{lem}\label{HighMinDegreeGivesClique}
Let $k \geq 1$. If $B$ is a graph with $\delta(B) \geq \frac{2k+1}{2k+2}\card{B} + k - 1$ such that $\join{K_1}{B}$ is not $d_k$-choosable, then $\omega(B) \geq \card{B} - 2k$.
\end{lem}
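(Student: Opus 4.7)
The plan is proof by contradiction, extending the argument of Lemma~\ref{VertexHighAverageDegree} from the $d_1$ to the $d_k$ setting, with the minimum-degree hypothesis replacing the average-degree hypothesis used there. Assume $\omega(B) < \card{B} - 2k$ and let $L$ be a minimal bad $d_k$-assignment on $\join{K_1}{B}$. The goal is to contradict the degree hypothesis by counting list sizes color by color.

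The first task is to bound $\card{Pot(L)}$ from above: the Small Pot Lemma gives $\card{Pot(L)} \leq \card{\join{K_1}{B}} - 1 = \card{B}$, and the $d_k$-analogue of Lemma~\ref{NeighborhoodPotShrink} should give the slight sharpening needed. The second task is to control, for each color $c \in Pot(L)$, the color graph $B_c \DefinedAs \setb{v}{V(B)}{c \in L(v)}$. By minimality of $L$, the $d_k$-version of Lemma~\ref{IntersectionsInB} should give $\alpha(B_c) \leq k+1$ and that $\card{B_c}$ cannot greatly exceed $\omega(B_c) \leq \omega(B)$: specifically, $\card{B_c} \leq \omega(B)$ holds for all but $O(k)$ exceptional colors, and the exceptional colors have $\card{B_c}$ exceeding $\omega(B)$ by only $O(k)$.

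The third task is the counting step. Combining the identity
\[
\sum_{c \in Pot(L)} \card{B_c} \;=\; \sum_{v \in V(\join{K_1}{B})} \card{L(v)}
\]
with the per-vertex lower bound $\card{L(v)} \geq d(v) - k$ and the minimum-degree hypothesis, the right-hand side is at least $\frac{2k+1}{2k+2}\card{B}^2 + \card{B} - k$ up to lower-order terms. On the other hand, the per-color upper bounds, $\card{Pot(L)} \leq \card{B}$, and $\omega(B) < \card{B} - 2k$ together produce an upper bound strictly smaller than this lower bound, which is the desired contradiction.

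The main obstacle will be the case analysis on $\card{B_c}$: the $k=1$ argument of Lemma~\ref{VertexHighAverageDegree} split into two regimes ($\card{H_c} \geq \omega(H)+3$ and $\card{H_c} = \omega(H)+2$), and the $d_k$ analogue will need roughly $k+1$ regimes, with the bookkeeping arranged so that the exact constants $\frac{2k+1}{2k+2}$ and $-2k$ come out sharp. A secondary obstacle is verifying that the analogues of Lemmas~\ref{NeighborhoodPotShrink} and~\ref{IntersectionsInB} needed here are available in the $d_k$-choosability toolkit of \cite{mules}; if not, the missing ingredients will have to be established separately before the counting argument closes.
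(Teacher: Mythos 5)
Your proposal takes a genuinely different route from the paper's, but it has a structural gap that prevents it from closing, beyond the secondary concern you flag about the availability of $d_k$-analogues.

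The issue is quantitative and can be seen without worrying about the missing lemmas. The counting identity $\sum_{c} \card{B_c} = \sum_{v} \card{L(v)}$ gives a lower bound of roughly $\card{B}\bigl(d(B) + 1 - k\bigr)$, and any per-color upper bound of the shape ``$\card{B_c} \leq \omega(B) + O(k)$ for all but $O(k)$ colors, with $\card{Pot(L)} \leq \card{B}$'' yields at most $\card{B}\bigl(\omega(B) + O(k)\bigr)$. Combining these only produces $d(B) \leq \omega(B) + O(k)$, which is the kind of conclusion Lemma~\ref{VertexHighAverageDegree} actually proves. But that is too weak here: under the hypothesis $\delta(B) \geq \frac{2k+1}{2k+2}\card{B} + k - 1$, it only gives $\omega(B) \geq \frac{2k+1}{2k+2}\card{B} - O(k)$, which for $\card{B}$ larger than about $4k^2$ is strictly weaker than the target $\omega(B) \geq \card{B} - 2k$. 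A concrete obstruction: let $B$ be complete $(\card{B}/2)$-partite with parts of size $2$ and $\card{B}$ large. Then $\delta(B) = \card{B} - 2$ satisfies the hypothesis, $\omega(B) = \card{B}/2 < \card{B} - 2k$, and $\card{B_c}$ can equal $\card{B}$ for many colors, so the counting never reaches a contradiction. The lemma is true for such $B$ (so $\join{K_1}{B}$ really is $d_k$-choosable), but it is not provable by edge/list counting; you must exhibit a coloring.

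The paper's proof is constructive and avoids counting $\sum \card{B_c}$ entirely. It takes a maximal set $X$ of pairwise disjoint nonadjacent pairs $\set{x_i,y_i}$ in $B$; the vertices outside $X$ form a clique $K$, so if there are at most $k$ pairs you are done. Otherwise, by the Small Pot Lemma, each pair has $\card{L(x_i)} + \card{L(y_i)} > \card{Pot(L)}$ with enough slack to greedily choose $s$ distinct shared colors $c_i \in L(x_i) \cap L(y_i)$ and color both vertices of each pair with $c_i$. The min-degree hypothesis then guarantees each $v \in K$ is joined to enough whole pairs to save at least $k$ colors, and a Hall-type argument (via Lemma~\ref{BasicFiniteSets}) finishes the coloring on $K$ and the $K_1$ vertex. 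This argument scales correctly with $\card{B}$ precisely because it never tries to bound $\card{B_c}$ by a quantity near $\omega(B)$. If you want to salvage a counting approach, you would at minimum need per-color bounds of the form $\card{B_c} \leq \frac{2k+1}{2k+2}\card{B} + O(k)$ rather than $\omega(B) + O(k)$, and nothing in the $d_1$ or $d_k$ intersection toolkit suggests such a bound.
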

\begin{proof}
Let $L$ be a minimal bad $d_k$-assignment on $\join{K_1}{B}$. By the Small Pot Lemma, we have $\card{Pot(L)} \leq \card{B}$. Let $X \DefinedAs \set{\set{x_1, y_1}, \ldots, \set{x_s, y_s}}$ be a maximal set of pairwise disjoint independent sets of size $2$ in $B$.  Note that $s \leq \frac{\card{B}}{2}$.  Put $K \DefinedAs B - \cup X$.  Plainly, $K$ is complete.  Thus we are done if $s \leq k$.

Suppose $s \geq k + 1$.  Put $\epsilon \DefinedAs \frac{1}{2k + 2}$. For $i \in \irange{s}$ we have $\card{L(x_i)} + \card{L(y_i)} \geq d_B(x_i) + d_B(y_i) - 2k + 2 \geq 2(1-\epsilon)\card{B} \geq \card{B} + s$ and hence we may pick $s$ different colors $c_1, \ldots, c_s$ where $c_i \in L(x_i) \cap L(y_i)$.  Color both $x_i$ and $y_i$ with $c_i$ to get a list assignment $L'$ on $K$.

Each $v \in V(K)$ has at least $\delta(B) - (\card{K} - 1) \geq (1-\epsilon)\card{B} + k - \card{K} = 2s + k - \epsilon\card{B}$ neighbors in $\cup X$ and hence is joined to at least $s + k - \epsilon\card{B}$ pairs $\set{x_i, y_i}$.  Since $L$ is bad, we must have $s < \epsilon\card{B}$.

Now consider a pair $\set{x_i, y_i}$.  Vertices in $N(x_i) \cap N(y_i) \cap K$ have a color saved in $L'$ so we wish to show this set is big.  We have $\card{N(x_i) \cap N(y_i) \cap K} \geq 2(\delta(B) - (2s - 2)) - \card{K} = 2\delta(B) + 4 - \card{B} - 2s > (1-2\epsilon)\card{B} + 2k+2 - 2s$.

For $v \in V(K)$, let $l(v)$ count the number of $i \in \irange{s}$ such that $v$ is joined to $\set{x_i, y_i}$.  Then $\card{L'(v)} \geq \card{K} + l(v) - k$ for each $v \in V(K)$.  By Hall's theorem, we can complete the coloring if for all $0 \leq a \leq k$ we have $\card{\setb{v}{V(K)}{l(v) \geq k - a}} \geq a + 1$.  Thus it will suffice to show that for any $k-a$ indices $i_1, \ldots i_{k-a} \in \irange{s}$ we have $\card{\bigcap_{j \in \irange{k-a}} N(x_{i_j}) \cap N(y_{i_j}) \cap K} \geq a+1$.  If this were not the case for some $a$, then by Lemma \ref{BasicFiniteSets} we would have

\[(k-a)\parens{\frac{k}{k+1}\card{B} + 2(k+1) - 2s} < (k-a-1)(\card{B} - 2s) + a + 1.\]

A simple computation shows that this is impossible.  Hence we can complete the coloring, contradicting the fact that $L$ is bad.
\end{proof}

We can use this to turn Theorem \ref{MainSimpleCorollary} into a theorem about graphs with dense neighborhoods as follows.  Little effort was put into optimizing the bound $d(G_v)$ that we can get using Lemma \ref{HighMinDegreeGivesClique} since with a more developed $d_k$-choosability theory any such bound would be easily defeated (as in the $k=1$ case).

\begin{thm}\label{MainResult}
Let $k \geq 0$. Every graph $G$ with $\omega(G) \leq \Delta(G) - 2k$ such that $d(G_v) \geq \frac{6k^2}{6k^2 + 1}\Delta(G) + k + 6$ for each $v \in V(G)$ is $(\Delta(G)-k)$-colorable.
\end{thm}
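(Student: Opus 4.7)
The plan is to reduce Theorem \ref{MainResult} to Theorem \ref{MainSimpleCorollary} via Lemma \ref{HighMinDegreeGivesClique}: the lemma turns a subgraph of some $H_v$ with large minimum degree into a large clique through $v$, which is precisely what Theorem \ref{MainSimpleCorollary} rules out. The bridge from the ``dense neighborhood'' hypothesis to the min-degree hypothesis of the lemma will be a greedy extraction step inside $H_v$.

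First I would dispose of $k = 0$ (Brooks' theorem, since the hypothesis $\omega \leq \Delta$ is automatic) and $k = 1$ (already covered, more strongly, by Theorem \ref{BKdense}), and assume $k \geq 2$. Take a minimal counterexample $G$, set $\Delta \DefinedAs \Delta(G)$, and pass to a $(\Delta - k + 1)$-vertex-critical induced subgraph $H$. Then $\delta(H) \geq \Delta - k$, no induced subgraph of $H$ is $d_k$-choosable (so $H \in \D_k$), and a short Brooks argument lets us assume $\Delta(H) = \Delta$. Applying Theorem \ref{MainSimpleCorollary} to $H$ then produces $v \in V(H)$ with $\omega(v) < \frac{2k}{2k+1}\Delta + 2k + 1$; this is the vertex at which I will force the contradiction.

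Next I would bound $d(H_v)$ from below in terms of $d(G_v)$. At most $k$ vertices of $N_G(v)$ are missing from $N_H(v)$ and each accounts for at most $\Delta - 1$ lost edges, so $d(H_v) \geq d(G_v) - O(k) \geq \frac{6k^2}{6k^2+1}\Delta + O(1)$. Setting $B \DefinedAs H_v$, iteratively delete from $B$ any vertex with $d_B(u) < \frac{2k+1}{2k+2}\card{B} + k - 1$. At each step the removed vertex has more than $\frac{\card{B}}{2k+2} - k$ non-neighbors in $B$, so this many non-edges of $B$ are destroyed. On the other hand, the total number of non-edges in $H_v$ is at most $\frac{\card{H_v}(\card{H_v} - 1 - d(H_v))}{2} \leq \frac{\Delta^2}{2(6k^2+1)} + O(k\Delta)$. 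The key calibration is that pushing $\card{B}$ all the way down to $T \DefinedAs \frac{2k}{2k+1}\Delta + 4k$ would require strictly more non-edges than this budget (for $k \geq 2$, where the constant $\frac{6k^2}{6k^2+1}$ is exactly what makes the inequality work), so the procedure must halt with $\card{B} \geq T$ and $\delta(B) \geq \frac{2k+1}{2k+2}\card{B} + k - 1$.

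Finally, because $H \in \D_k$ and $\join{K_1}{B}$ (with $v$ as the $K_1$) is an induced subgraph of $H$, the join is not $d_k$-choosable, and Lemma \ref{HighMinDegreeGivesClique} gives $\omega(B) \geq \card{B} - 2k$. Adjoining $v$ produces a clique through $v$ of size at least $\card{B} - 2k + 1 \geq \frac{2k}{2k+1}\Delta + 2k + 1$, contradicting the choice of $v$. The main obstacle is the non-edge bookkeeping in the extraction step: the drop from $d(G_v)$ to $d(H_v)$, the non-edge budget in $H_v$, and the per-step deficit must all combine so that the hypothesis $d(G_v) \geq \frac{6k^2}{6k^2+1}\Delta + k + 6$ is just enough for the procedure to stop above size $T$.
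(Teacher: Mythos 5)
Your proposal follows essentially the same route as the paper's proof: dispose of $k=0$ by Brooks and $k=1$ by Theorem~\ref{BKdense}, pass to a $(\Delta+1-k)$-vertex-critical subgraph $H$, invoke Theorem~\ref{MainSimpleCorollary} to locate a vertex $v$ outside every clique of size $\frac{2k}{2k+1}\Delta+2k+1$, and then repeatedly apply Lemma~\ref{HighMinDegreeGivesClique} (in contrapositive form) to peel off low-degree vertices of $H_v$, counting the non-edges this exposes against the budget imposed by the dense-neighborhood hypothesis. The paper phrases the peeling as extracting a sequence $y_1,\ldots,y_s$ of length $s=\frac{\Delta-k}{2k+1}-6k$ and summing the non-edge deficits; your phrasing (delete until $\delta(B)\geq\frac{2k+1}{2k+2}|B|+k-1$, and show the budget is exhausted before $|B|$ drops to $T=\frac{2k}{2k+1}\Delta+4k$) is the same computation in slightly different bookkeeping, and the final step (adjoin $v$ to the clique to contradict the choice of $v$) is identical.
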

\begin{proof}
Let $G$ be such a graph and put $\Delta \DefinedAs \Delta(G)$.  Suppose $G$ is not $(\Delta-k)$-colorable. If $k=0$, then Brooks' theorem gives a contradiction. If $k=1$, Theorem \ref{BKdense} gives a contradiction.

So, we must have $k \geq 2$.  Let $H$ be a $(\Delta + 1 - k)$-vertex-critical subgraph of $G$.  Then $\delta(H) \geq \Delta - k$ and hence $d(H_v) \geq d(G_v) - k$.  By Theorem \ref{MainSimpleCorollary}, there must be $v \in V(G)$ such that $\omega(v) < \frac{2k}{2k+1}\Delta + 2k + 1$.

Put $s \DefinedAs \frac{\Delta-k}{2k+1} - 6k$. Applying Lemma \ref{HighMinDegreeGivesClique} repeatedly on $\join{K_1}{H_v}$, gives a sequence $y_1, \ldots, y_s \in N_H(v)$ such that for each $i \in \irange{s}$ we have

\[\card{N_H(y_i) \cap (N_H(v) - \set{y_1, \ldots, y_{i-1}})} < \frac{2k+1}{2k+2}(\card{H_v} - i) + k - 1.\]

Hence the number of edges missing in $H_v$ is at least

\[-(k-1)s + \frac{1}{2(k+1)}\sum_{i \in \irange{s}} \parens{\Delta - k - i}.\]

On the other hand we have $d(H_v) \geq d(G_v) - k \geq \frac{6k^2}{6k^2 + 1}\Delta + 6$ and hence the number of edges missing in $H_v$ is at most $\binom{\Delta-k}{2} - \frac{3k^2}{6k^2 + 1}\Delta (\Delta - k) - 3(\Delta - k) < \frac{1}{2(6k^2 + 1)} (\Delta-k)^2 - \frac{7}{2}(\Delta - k)$.  Therefore, multiplying through by $2(k+1)(6k^2+1)$, we must have
\[-2(k^2-1)(6k^2+1)s + (6k^2+1)\parens{s(\Delta - k) - s(s+1)} \leq (k+1)(\Delta-k)^2 - 7(k+1)(6k^2+1)(\Delta - k).\]

Let's collect everything to the left side and look at the coefficients of the powers of $\Delta - k$ individually.  Plugging in for $s$, we have $\frac{6k^2 + 1}{2k+1} - \frac{6k^2+1}{(2k+1)^2} - (k+1) = \frac{8k^3 - 8k^2 - 3k - 1}{(2k+1)^2}$ for $(\Delta - k)^2$.  For $(\Delta-k)^1$ we get $\frac{-2(k^2-1)(6k^2+1)}{2k+1} - (6k-1)(6k^2+1) + 7(k+1)(6k^2+1) > 8(6k^2+1)$. Finally, for $(\Delta-k)^0$
we have $12k(k^2-1)(6k^2+1) - 36k^2 + 6k$.  Thus all of the coefficients are positive for $k \geq 2$, a contradiction.
\end{proof}

\begin{problem}
Develop $d_k$-choosability theory and improve the bound $d(G_v) \geq \frac{6k^2}{6k^2 + 1}\Delta(G) + k + 6$.  In particular, can the dependence on $k$ in $\frac{6k^2}{6k^2 + 1}$ be made linear?
\end{problem}

\section{List coloring lemmas}
Let $G$ be a graph.  A \emph{list assignment} to the vertices of $G$ is a
function from $V(G)$ to the finite subsets of $\mathbb{N}$.  A list assignment
$L$ to $G$ is \emph{good} if $G$ has a coloring $c$ where $c(v) \in L(v)$ for
each $v \in V(G)$.  It is \emph{bad} otherwise.  We call the collection of all
colors that appear in $L$, the \emph{pot} of $L$.  That is $Pot(L) \DefinedAs
\bigcup_{v \in V(G)} L(v)$.  For a subgraph $H$ of $G$ we write $Pot_H(L)
\DefinedAs \bigcup_{v \in V(H)} L(v)$. For $S \subseteq Pot(L)$, let $G_S$ be
the graph $G\left[\setb{v}{V(G)}{L(v) \cap S \neq \emptyset}\right]$.  We also
write $G_c$ for $G_{\{c\}}$. For $\func{f}{V(G)}{\IN}$, an $f$-assignment on $G$ is an
assignment $L$ of lists to the vertices of $G$ such that $\card{L(v)} = f(v)$
for each $v \in V(G)$.  We say that $G$ is \textit{$f$-choosable} if every
$f$-assignment on $G$ is good.  Given $\func{f}{V(G)}{\mathbb{N}}$, we have a partial order on the $f$-assignments to $G$ given by $L < L'$ iff $\card{Pot(L)} < \card{Pot(L')}$.  When we talk of \emph{minimal} $f$-assignments, we mean minimal with respect to this partial order.

We'll need a lemma about bad list assignments with minimum pot size proved in \cite{mules}.
Some form of this lemma which we call the \emph{Small Pot Lemma} has appeared independently in at least two places we know of---Kierstead \cite{kierstead2000choosability} and Reed and Sudakov \cite{ReedSudakov}.  We also use the following precursor to this lemma.

\begin{lem}\label{CannotColorSelfWithSelf}
Let $G$ be a graph and $\func{f}{V(G)}{\mathbb{N}}$.  Assume $G$ is not $f$-choosable and let $L$ be a minimal bad $f$-assignment. Assume $L(v) \neq Pot(L)$ for each $v \in V(G)$.  Then, for each nonempty $S \subseteq Pot(L)$, any coloring of $G_S$ from $L$ uses some color not in $S$.
\end{lem}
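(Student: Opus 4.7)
The plan is a contradiction argument: assume that some nonempty $S \subseteq Pot(L)$ admits a proper $L$-coloring $\phi$ of $G_S$ using only colors in $S$, and produce a bad $f$-assignment $L^*$ on $G$ with strictly smaller pot, contradicting the minimality of $L$.

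First I would observe that under this assumption the restriction $L|_{G - V(G_S)}$ is itself a bad $f$-assignment on the induced subgraph $G - V(G_S)$. For if a proper coloring $\psi'$ on $V(G) - V(G_S)$ drawn from $L$ existed, combining it with $\phi$ would yield a proper $L$-coloring of all of $G$: the key is that $L(u) \cap S = \emptyset$ for every $u \notin V(G_S)$ by the very definition of $G_S$, whereas $\phi$ takes values in $S$, so no edge between the two sides can produce a clash. That would contradict $L$ being bad, establishing badness of $L|_{G - V(G_S)}$.

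Next, I would fix any $c^* \in S$ and build $L^*$ by swapping $c^*$ out of every list that contains it. Explicitly, for each $v$ with $c^* \in L(v)$ I would use $L(v) \neq Pot(L)$ to pick $d_v \in Pot(L) \setminus L(v)$ and set $L^*(v) \DefinedAs (L(v) \setminus \{c^*\}) \cup \{d_v\}$; for all other $v$, set $L^*(v) \DefinedAs L(v)$. Then $\card{L^*(v)} = f(v)$ throughout, no color outside $Pot(L)$ is introduced, and $c^*$ is eliminated from every list, so $\card{Pot(L^*)} < \card{Pot(L)}$. Crucially, $c^* \in S$ forces $c^* \notin L(u)$ for every $u \notin V(G_S)$, so $L^*$ agrees with $L$ on $V(G) - V(G_S)$.

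The argument then closes in one line: if $L^*$ were good, any proper $L^*$-coloring of $G$ would restrict to a proper coloring of $G - V(G_S)$ using lists from $L$, contradicting the badness of $L|_{G - V(G_S)}$ established above. Hence $L^*$ is bad with strictly smaller pot, contradicting the minimality of $L$. The only genuinely delicate point is the opening observation; once one sees that any $L$-coloring of $G - V(G_S)$ automatically avoids $S$ and therefore cannot conflict with the $\phi$-colors on $V(G_S)$, everything after is just bookkeeping on the pot.
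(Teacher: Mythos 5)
Your proof is correct, and it is the natural argument for a Small Pot Lemma precursor; the paper itself only states Lemma~\ref{CannotColorSelfWithSelf} and defers to \cite{mules} for its proof, so there is no in-paper proof to compare against. The two observations you isolate are exactly the crux: first, since $v \notin V(G_S)$ precisely when $L(v) \cap S = \emptyset$, a hypothetical $L$-coloring of $G - V(G_S)$ and the $S$-valued coloring $\phi$ of $G_S$ can never clash across the cut, so $L$ restricted to $G - V(G_S)$ must be bad; second, the hypothesis $L(v) \neq Pot(L)$ supplies a replacement color for each list containing $c^* \in S$, so the swapped assignment $L^*$ keeps every list size at $f(v)$, stays inside $Pot(L)$, loses $c^*$ entirely, and agrees with $L$ off $G_S$, which forces $L^*$ to be bad with a strictly smaller pot and contradicts minimality.
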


\begin{SmallPotLemma}
Let $G$ be a graph and $\func{f}{V(G)}{\mathbb{N}}$ with $f(v) < \card{G}$ for all $v \in V(G)$.  If $G$ is not $f$-choosable, then $G$ has a minimal bad $f$-assignment $L$ such that $\card{Pot(L)} < \card{G}$.
\end{SmallPotLemma}

We also need the notion of $d_k$-choosability from \cite{mules}.

\begin{defn}
Let $G$ be a graph and $r \in \mathbb{Z}$.  Then $G$ is \emph{$d_k$-choosable} if $G$ is $f$-choosable where $f(v) = d(v) - k$.
\end{defn}

An in-depth study of the $d_1$-choosable graphs was performed in \cite{mules}.  We only need a small portion of those results here and their generalization to $d_k$-choosability.  We use the following results on $d_1$-choosability.

\begin{lem}\label{K_tClassification}
For $t \geq 4$, $\join{K_t}{B}$ is not $d_1$-choosable iff $\omega(B) \geq \card{B} - 1$; or $t = 4$ and $B$ is $E_3$ or a claw; or $t = 5$ and $B$ is $E_3$.
\end{lem}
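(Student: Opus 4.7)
The plan is to prove both directions of the equivalence. For the ``if'' direction, I would construct explicit bad $d_1$-assignments for each exceptional family. When $\omega(B) \geq \card{B}-1$, the graph $\join{K_t}{B}$ contains $K_{t+\card{B}-1}$ as a subgraph, and starting from a list assignment witnessing that $K_n$ is not $(n-2)$-choosable (lists of size $n-2$ over a pot of size $n-1$ arranged so that no system of distinct representatives exists), extend it to the remaining vertex of $B$ using any list of the required size. For the small exceptions, exhibit concrete assignments: for $\join{K_4}{E_3}$, assign the three vertices of $E_3$ the lists $\{1,2,3\}, \{1,4,5\}, \{2,4,5\}$ whose pairwise intersections are nonempty but whose triple intersection is empty, and give each $K_4$ vertex the list $\{1,2,3,4,5\}$; any coloring must use at least two distinct colors on $E_3$, leaving fewer than $4$ free colors on $K_4$. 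The $t=5$, $B = E_3$ case and the claw case are handled analogously with slightly larger pots.

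For the ``only if'' direction, suppose $L$ is a minimal bad $d_1$-assignment on $G = \join{K_t}{B}$. By the Small Pot Lemma, $\card{Pot(L)} \leq t + \card{B} - 1$, so every $v \in V(K_t)$ (with $d_G(v) = t + \card{B} - 1$ and $\card{L(v)} = t + \card{B} - 2$) omits at most one color of the pot. If $B$ is complete we are done by the first exception. Otherwise choose a non-edge $uv$ in $B$. If $L(u) \cap L(v) \neq \emptyset$, pick a common color $c$, color both $u$ and $v$ with $c$, and argue that the reduced graph $\join{K_t}{(B-u-v)}$ is list-colorable from the punctured lists: each $K_t$-vertex now has list size at least $t + \card{B} - 3$ and degree $t + \card{B} - 3$ in the reduced graph, so together with Lemma \ref{CannotColorSelfWithSelf} and a Hall-type extension on the remaining vertices of $B$, the coloring completes.

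The hard part is the case where no non-edge $uv$ of $B$ satisfies $L(u) \cap L(v) \neq \emptyset$. In this regime the pot-size bound forces very restrictive arithmetic: the lists on every independent pair of $B$ are disjoint subsets of a pot of size at most $t + \card{B} - 1$, while every $K_t$-vertex already accounts for almost the entire pot. Combining these two constraints through a careful counting argument (iterated over independent pairs, and, when $\alpha(B) \geq 3$, independent triples) compresses both $\card{B}$ and $\alpha(B)$, so that the configuration can only survive when $B$ is small enough and sparse enough to be $E_3$ or the claw. The restrictions on $t$ then drop out from re-checking the pot-size inequality inside each candidate $B$: for $t$ beyond the listed bounds, some non-edge of $B$ is forced to have overlapping lists, which re-triggers the pairing argument and yields a proper coloring, contradicting badness of $L$. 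The delicate bookkeeping in this final case is where the main effort lies.
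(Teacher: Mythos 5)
The paper does not actually prove this lemma: it is one of the $d_1$-choosability results imported wholesale from~\cite{mules} and stated without proof, so there is no in-paper argument for you to match. Evaluated on its own terms, your outline has the right general shape (explicit bad assignments for the ``if'' direction; Small Pot Lemma plus a dichotomy on whether some nonadjacent pair in $B$ has intersecting lists for the ``only if'' direction), but two of its steps do not hold up as stated.

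First, the ``pick a nonedge $uv$ with $L(u) \cap L(v) \ne \emptyset$, color both with a common color $c$, and extend'' step is not sound. After deleting $u,v$ and removing $c$ from the remaining lists, a vertex $w \in B - u - v$ that is adjacent to neither $u$ nor $v$ keeps its full degree $t + d_B(w)$ in the reduced graph while its punctured list has size only $\ge t + d_B(w) - 2$; that is still a $d_1$-type shortfall, not a $d_0$-type one, so neither Lemma~\ref{CannotColorSelfWithSelf} nor a greedy Hall-type extension finishes the coloring. The conclusion you want here---that in a minimal bad assignment all nonadjacent pairs of $B$ must have disjoint lists---is exactly the content of Lemma~\ref{LowSinglePair} (which this paper also cites from~\cite{mules} rather than proving), and it requires a genuine argument of its own. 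Second, the all-disjoint-lists case is entirely deferred, and that is where essentially all the content of the lemma lives: the counting has to simultaneously reproduce the generic outcome $\omega(B) \ge |B| - 1$ (for instance $B = E_2$ with its two vertices getting disjoint lists must land here) and carve out the sporadic exceptions $B = E_3$ at $t \in \{4,5\}$ and $B = K_{1,3}$ at $t = 4$, with the $t$-thresholds emerging from the arithmetic. You describe the flavor of this analysis but supply none of it, so as written the proposal is an outline with the hard part missing. The intersection and pot-shrinking machinery the paper quotes (Lemmas~\ref{IntersectionsInB}, \ref{NeighborhoodPotShrink}, \ref{LowSinglePair}) is almost certainly the intended toolkit for filling in that gap. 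The ``if'' direction sketch is closer to correct, though note that inside the clique $K_{t+|B|-1}$ the list sizes are not uniformly $n-2$ (the $K_t$ vertices and the clique vertices adjacent to the extra vertex have larger lists); the simplest construction just gives every clique vertex a list contained in a fixed set of size $t + |B| - 2$, which already defeats any coloring regardless of what the extra vertex receives.
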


\begin{lem}\label{E2JoinWithSomeLow}\label{mixed}
Let $A$ be a graph with $\card{A} \geq 4$.  Let $L$ be a list assignment on $G \DefinedAs \join{E_2}{A}$ such that $\card{L(v)} \geq d(v) - 1$ for all $v \in V(G)$ and each component $D$ of $A$ has a vertex $v$ such that $\card{L(v)} \geq d(v)$.  Then $L$ is good on $G$.
\end{lem}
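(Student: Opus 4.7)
The plan is to assume toward contradiction that $L$ is bad on $G = \join{E_2}{A}$ and then combine the Small Pot Lemma with a simple greedy argument. Let $x$ and $y$ denote the two vertices of the $E_2$-side; they are nonadjacent in $G$ and both complete to $A$, so $d_G(x) = d_G(y) = \card{A}$ and $\card{L(x)}, \card{L(y)} \geq \card{A}-1$. After shrinking each list down to its guaranteed lower bound I may assume $L$ is an $f$-assignment with $f(v) < \card{G} = \card{A}+2$ everywhere, and the Small Pot Lemma then supplies a minimal bad such $L$ with $\card{Pot(L)} \leq \card{A}+1$.

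The first case is $L(x) \cap L(y) = \emptyset$, which I would rule out purely by counting: both lists lie inside $Pot(L)$, so
\[
2\card{A} - 2 \leq \card{L(x)} + \card{L(y)} = \card{L(x) \cup L(y)} \leq \card{Pot(L)} \leq \card{A} + 1,
\]
forcing $\card{A} \leq 3$ and contradicting $\card{A} \geq 4$.

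Hence $L(x) \cap L(y)$ contains a color $c$, which I assign to both $x$ and $y$; it then remains to extend properly to each component $D$ of $A$ using the punctured lists $L'(v) \DefinedAs L(v) \setminus \set{c}$. For the distinguished vertex $v_0 \in D$ guaranteed by the hypothesis I get $\card{L'(v_0)} \geq d_G(v_0) - 1 = d_D(v_0) + 1$, while every other $v \in D$ satisfies $\card{L'(v)} \geq d_G(v) - 2 = d_D(v)$. I would finish with the classical degree-choosability trick: take a spanning tree of $D$ rooted at $v_0$ and color bottom-up. Every non-root $v$ is processed before its parent, so at most $d_D(v) - 1$ of its neighbors have been colored and a list of size $\geq d_D(v)$ still contains a usable color; when we finally reach $v_0$ its $d_D(v_0)$ neighbors are all assigned, but the extra slot $\card{L'(v_0)} \geq d_D(v_0) + 1$ still leaves a free color.

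The only real content is recognizing that the ``one full vertex per component'' clause is exactly the slack needed to close the greedy DFS at the root after the common color $c$ gets burned at both $x$ and $y$; everything else is routine bookkeeping, and the Small Pot Lemma neatly disposes of the otherwise awkward disjoint-lists case.
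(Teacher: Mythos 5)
Your proof is correct. A small but important point: the paper itself gives no proof of this lemma --- it is quoted as one of several $d_1$-choosability facts imported from \cite{mules} --- so there is no in-text argument to compare against. Judged on its own, your reduction is sound: shrinking lists preserves badness, so if the given $L$ were bad, the tight $f$-assignment (with $f(v)=d(v)$ at one chosen vertex per component of $A$ and $f(v)=d(v)-1$ elsewhere) would also be bad, and all $f(v)<\card{G}$, so the Small Pot Lemma applies. With $\card{Pot(L)}\le\card{A}+1$ and $\card{L(x)},\card{L(y)}\ge\card{A}-1$, disjointness of $L(x)$ and $L(y)$ forces $\card{A}\le 3$, contradiction. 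Once a common color $c$ is placed on both $E_2$-vertices, each $v\in A$ loses at most the single color $c$ (not two, precisely because $x$ and $y$ share $c$), so the punctured lists have $\card{L'(v)}\ge d_D(v)$ with strict surplus at the distinguished root; the post-order greedy through a spanning tree of each component then closes. You have correctly identified that the one-full-vertex-per-component hypothesis is exactly what pays for the color burned at the root. The only thing I would ask you to make explicit is the leaf-first observation that a vertex's parent is still uncolored when the vertex is processed --- you state it, and it is what makes the $d_D(v)-1$ bound legitimate even in the presence of non-tree back-edges into already-colored descendants.
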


\begin{lem}\label{IntersectionsInB}
Let $A$ and $B$ be graphs such that $G \DefinedAs
\join{A}{B}$ is not $d_1$-choosable.  If either $\card{A} \geq 2$ or $B$ is
$d_0$-choosable and $L$ is a bad $d_1$-assignment on $G$, then
\begin{enumerate}
\item for any independent set $I \subseteq V(B)$ with $\card{I} = 3$, we have
$\bigcap_{v \in I} L(v) = \emptyset$; and
\item for disjoint nonadjacent pairs $\set{x_1, y_1}$ and $\set{x_2, y_2}$ at least one of the following holds
	\begin{enumerate}
	\item $L(x_1) \cap L(y_1) = \emptyset$;
	\item $L(x_2) \cap L(y_2) = \emptyset$;
	\item $\card{L(x_1) \cap L(y_1)} = 1$ and $L(x_1) \cap L(y_1) = L(x_2) \cap L(y_2)$.
	\end{enumerate}
\end{enumerate}
\end{lem}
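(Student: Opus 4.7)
I would prove each part by contradiction: assume the stated intersection pattern holds, pre-color the offending vertices of $B$ using the shared color(s), and show that the residual list assignment on what remains of $G$ is good. The resulting proper $L$-coloring of $G$ contradicts badness of $L$.

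For part (1), suppose $I \DefinedAs \set{v_1, v_2, v_3} \subseteq V(B)$ is independent in $B$ and $c \in \bigcap_i L(v_i)$. Color each $v_i$ with $c$, shrink every remaining list by $\set{c}$, and work on $G' \DefinedAs G - I = \join{A}{B - I}$. A direct count yields $\card{L'(a)} \geq d_{G'}(a) + 1$ for each $a \in A$ (which loses three neighbors but at most one color), while for $b \in V(B) - I$ with $k \DefinedAs \card{N_B(b) \cap I}$ one gets $\card{L'(b)} \geq d_{G'}(b) + k - 2$. Vertices in $B - I$ with $k \geq 2$ already have degree-sized residual lists; the trouble spots are those with $k \leq 1$. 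These are absorbed either by the one-color surplus on every $a \in A$ when $\card{A} \geq 2$ (via a Hall-type argument that routes the slack from $A$ to the deficient $B$-vertices), or, when $\card{A} = 1$, by first coloring $A$'s unique vertex with a color chosen from its comparatively large list $\card{L(a)} \geq \card{B} - 1$ so as to avoid depleting the short lists in $B - I$, after which $d_0$-choosability of $B$ finishes the job.

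For part (2), assume none of (a)--(c) holds. Then one can pick distinct colors $c_1 \in L(x_1) \cap L(y_1)$ and $c_2 \in L(x_2) \cap L(y_2)$: if $\card{L(x_1) \cap L(y_1)} \geq 2$, fix $c_2$ first and take $c_1$ in the first intersection avoiding it; otherwise the two singleton intersections differ and the choice is immediate. Color $x_1, y_1$ with $c_1$ and $x_2, y_2$ with $c_2$, shrink the other lists by $\set{c_1, c_2}$, and apply the same strategy on $G' \DefinedAs \join{A}{B - \set{x_1, y_1, x_2, y_2}}$. A parallel count gives $\card{L'(a)} \geq d_{G'}(a) + 1$ for $a \in A$ and $\card{L'(b)} \geq d_{G'}(b) + k - 3$ for $b$ in the remainder of $B$, where $k$ now counts $b$'s neighbors among the four pre-colored vertices; the analysis then mirrors part (1).

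The main technical obstacle in both parts is completing the coloring on $G'$ in the $\card{A} = 1$ sub-case, where residual lists on a few $B$-vertices can fall just below their new degrees. The decisive step is to handle $A$'s lone vertex first: its list is large enough to guarantee a color that simultaneously lies in $L(a)$ and misses the (bounded number of) deficient residual lists in $B$, after which $d_0$-choosability of $B$ applied to an appropriately padded or compared list assignment on the remainder completes the coloring. The bookkeeping between list sizes and degrees in the $\card{A} \geq 2$ versus $\card{A} = 1$ cases is routine but is where the hypothesis ``either $\card{A} \geq 2$ or $B$ is $d_0$-choosable'' is actually used.
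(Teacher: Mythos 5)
The paper does not prove Lemma \ref{IntersectionsInB} at all; it is one of several $d_1$-choosability facts imported from \cite{mules} and stated without proof, so there is no in-paper argument to compare against. Evaluating your sketch on its own terms: the opening move (precolor $I$ monochromatically with $c$, resp.\ color the two pairs with distinct colors $c_1, c_2$, which you correctly argue can be chosen when (a)--(c) all fail) is the natural starting point, and your surplus count $\card{L'(a)} \geq d_{G'}(a) + 1$ for $a \in A$ is right.

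The completion step, however, is where the actual content of the lemma lies, and your sketch has real gaps there. First, your bookkeeping is looser than it needs to be: if $b \in V(B) - I$ has no neighbor in $I$, nothing adjacent to $b$ has been colored $c$, so there is no reason to delete $c$ from $L(b)$; with that done honestly the residual assignment on $B - I$ is already a $d_0$-assignment (deficit at most $1$ relative to $d_{G'}$, not $2$), and the whole problem reduces to coloring $B - I$ from a $d_0$-assignment and then greedily finishing on $A$. But that reduction exposes the real difficulty, which your sketch does not resolve. In the $\card{A} = 1$ subcase you invoke ``$d_0$-choosability of $B$,'' but $B - I$ need not inherit $d_0$-choosability from $B$ (deleting three vertices can leave a Gallai tree), and nothing in your argument explains why the particular residual $d_0$-assignment on $B - I$ is good; nor is it clear that a color for $a$ can always be chosen to miss every deficient residual list, since those lists can collectively cover $L(a)$. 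For $\card{A} \geq 2$ the promised ``Hall-type argument that routes the slack from $A$'' is asserted rather than given, and it is not obvious how a surplus of $1$ on each $A$-vertex absorbs an unbounded number of tight lists in $B - I$. Part (2) inherits all of the same issues with worse constants. In short, you have named the obstacle (``completing the coloring on $G'$'') but the sketch does not overcome it; the step where the hypothesis ``$\card{A} \geq 2$ or $B$ is $d_0$-choosable'' is supposed to do its work is precisely the step that is missing.
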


\begin{lem}\label{NeighborhoodPotShrink}
Let $H$ be a $d_0$-choosable graph such that $G \DefinedAs \join{K_1}{H}$ is not
$d_1$-choosable and $L$ a minimal bad $d_1$-assignment on $G$.  If some
nonadjacent pair in $H$ have intersecting lists, then $\card{Pot(L)} \leq \card{H} - 1$.
\end{lem}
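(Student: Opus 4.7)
My plan is to argue by contradiction. Assume $\card{Pot(L)} \geq \card{H}$. Since $\card{G} = \card{H} + 1$, the Small Pot Lemma forces $\card{Pot(L)} = \card{H}$. Writing $u$ for the $K_1$-vertex, $\card{L(u)} = d_G(u) - 1 = \card{H} - 1$, so there is a unique color $c_0 \in Pot(L) \setminus L(u)$. Let $x, y$ be the nonadjacent pair in $H$ with $\alpha \in L(x) \cap L(y)$. The first observation that drives the argument is that since $L$ is bad, any proper $L|_H$-coloring $\phi$ of $H$ must cover $L(u)$ (else the missing color extends to $u$, contradicting badness), so with $\card{L(u)} = \card{H} - 1$ every such $\phi$ uses at least $\card{H} - 1$ distinct colors.

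The core strategy is to produce a proper $L|_H$-coloring $\phi$ with $\phi(x) = \phi(y) = \alpha$: such a coloring uses at most $\card{H} - 1$ distinct colors, and in the case $\alpha \notin L(u)$ its image contains $\alpha$ outside $L(u)$, so $\card{\phi(V(H)) \cap L(u)} \leq \card{H} - 2 < \card{L(u)}$, contradicting $L(u) \subseteq \phi(V(H))$. To construct $\phi$, I would assign $\alpha$ to both $x$ and $y$ and extend to $H - \set{x,y}$ from the restricted lists $L'(v) \DefinedAs L(v) \setminus \set{\alpha}$ for $v \in N_H(x) \cup N_H(y)$ and $L'(v) \DefinedAs L(v)$ otherwise; a short degree count verifies that $L'$ is a $d_0$-assignment on $H - \set{x,y}$.

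The main obstacle is that $d_0$-choosability of $H$ does not automatically descend to $H - \set{x,y}$. I would circumvent this by invoking the minimality of $L$: if $L'$ were bad on $H - \set{x,y}$, one could reassemble it with $L(u)$ at $u$ and a suitable padding of $L'$ at $x$ and $y$ to produce a bad $d_1$-assignment on $G$ with pot of size strictly less than $\card{H}$, contradicting the minimality of $L$. Verifying the details of this reassembly, in particular that the padding can be chosen without destroying badness, is the technical heart of the argument.

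For the residual case $\alpha \in L(u)$, the counting inequality above is no longer automatic, since $\phi(V(H)) \subseteq L(u)$ remains possible. Here I would exploit the structural constraints of Lemma~\ref{IntersectionsInB}, which bound the independence number of the induced subgraph on $V_\alpha \DefinedAs \set{v \in V(H) : \alpha \in L(v)}$, together with a further Kempe-style modification of $\phi$ designed to force some vertex of $H$ to take color $c_0$, thereby vacating a color of $L(u)$ for $u$ and completing the contradiction.
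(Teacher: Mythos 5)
The paper itself does not prove this lemma (it is imported from \cite{mules}), so your argument has to stand on its own, and as written it does not. Your opening is fine: the Small Pot Lemma gives $\card{Pot(L)} \leq \card{H}$, badness forces every proper coloring of $H$ from $L$ to cover $L(u)$, and a coloring $\phi$ with $\phi(x)=\phi(y)=\alpha$ settles the case $\alpha \notin L(u)$. The first genuine gap is the existence of such a $\phi$. The danger is real: $H - \set{x,y}$ can have a Gallai-tree component in which every vertex is adjacent to exactly one of $x,y$ and has $\alpha$ in its (then tight) list, so the extension from $L'$ can simply fail for the given pair. Your proposed rescue --- if $L'$ is bad on $H-\set{x,y}$, ``reassemble'' it with $L(u)$ at $u$ and padded lists at $x,y$ into a bad $d_1$-assignment on $G$ with pot smaller than $\card{H}$ --- cannot work as described. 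Any $d_1$-assignment on $G$ that keeps $L(u)$ at $u$ has pot of size at least $\card{H}-1$, so to beat $\card{Pot(L)}=\card{H}$ its pot must be exactly $L(u)$; that forces you to strip $\alpha$ (precisely the color you care about when $\alpha\notin L(u)$) and every other stray color from all lists and pad them back to size $d_G(v)-1$, and badness of $L'$ on $H-\set{x,y}$ says nothing about colorings of $G$ that use the padding colors or color $x,y$ differently. So this step is not merely unfinished; the minimality-by-pot-size of $L$ is the wrong lever for it, and a correct proof has to do real work here (for instance via the structure of non-colorable degree-assignments on the bad component, or the sharper consequences of minimality such as Lemma \ref{CannotColorSelfWithSelf}).

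The second gap is the case $\alpha \in L(u)$. Even granting $\phi$, badness pins down its structure completely: $\phi(V(H))=L(u)$, $\alpha$ is used exactly twice (on $x$ and $y$), every other color of $L(u)$ exactly once, and the unique color $c_0 \in Pot(L)\setminus L(u)$ is unused. If some $w \in V(H)-\set{x,y}$ has $c_0 \in L(w)$, recoloring $w$ with $c_0$ vacates $\phi(w)\in L(u)$ for $u$ and you are done --- presumably the ``Kempe-style modification'' you intend --- but when $c_0$ appears only in the lists of $x$ and/or $y$ this move just produces an injective coloring using all of $Pot(L)$ and yields no contradiction. Your appeal to Lemma \ref{IntersectionsInB} (that $\alpha$ lies on no independent triple, so $V_\alpha$ has independence number at most $2$) is not attached to any concrete argument that closes this subcase. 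So the proposal proves the lemma only under the extra hypotheses that the extension exists and that either $\alpha \notin L(u)$ or $c_0$ occurs on some list outside $\set{x,y}$; the omitted cases are exactly where the difficulty of the lemma lives.
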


\begin{lem}\label{LowSinglePair}
Let $H$ be a $d_0$-choosable graph such that $G \DefinedAs \join{K_1}{H}$ is not
$f$-choosable where $f(v) \geq d(v)$ for the $v$ in the $K_1$ and $f(x) \geq
d(x) - 1$ for $x \in V(H)$. If $L$ is a minimal bad $f$-assignment on $G$, then
all nonadjacent pairs in $H$ have disjoint lists.
\end{lem}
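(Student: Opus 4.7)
The plan is a proof by contradiction. Suppose there exist nonadjacent $x, y \in V(H)$ with a common color $c \in L(x) \cap L(y)$; I will aim to derive a proper $L$-coloring of $G$, contradicting that $L$ is bad.

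First I would apply the Small Pot Lemma. After truncating $f$ to the minimum allowed values $f(u) = d_G(u) = |H|$ and $f(v) = d_H(v)$ for $v \in V(H)$, every $f$-value is strictly less than $|G| = |H|+1$, so the Small Pot Lemma yields $|\mathrm{Pot}(L)| \leq |H|$. Combined with $|L(u)| = |H|$ and $L(u) \subseteq \mathrm{Pot}(L)$, this forces $L(u) = \mathrm{Pot}(L)$ exactly, and in particular $c \in L(u)$.

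Second I would use the $d_0$-choosability of $H$ to produce a proper $L$-coloring $\psi$ of $H$. Since $\psi(V(H)) \subseteq L(u)$ and $|L(u)| = |H|$, if $\psi$ is not injective on $V(H)$ then $L(u) \setminus \psi(V(H))$ is nonempty and supplies a color for $u$, extending $\psi$ to a proper $L$-coloring of $G$ and contradicting the badness of $L$. So every proper $L$-coloring of $H$ must be a rainbow, i.e.\ a bijection $V(H) \to L(u)$. Fix such a $\psi$; then $\psi^{-1}(c) = \{z\}$ for a unique $z \in V(H)$. If $z \notin N_H(x) \cup N_H(y)$, then $\{x, y, z\}$ is an independent triple in $H$ with $c \in L(x) \cap L(y) \cap L(z)$, contradicting Lemma \ref{IntersectionsInB}(1), which applies because $L$ is also a bad $d_1$-assignment on $G = \join{K_1}{H}$ (since $f(v) \geq d_G(v) - 1$ everywhere) and $H$ is $d_0$-choosable.

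The principal obstacle is the remaining case: for every rainbow proper $L$-coloring of $H$, the preimage of $c$ lies in $N_H(x) \cup N_H(y)$. Here the plan is to exploit this structural restriction via Kempe-chain swaps on bichromatic subgraphs of $H$. Given a rainbow $\psi$ with $z := \psi^{-1}(c) \in N_H(x)$, swapping the colors $c$ and $\psi(x)$ along the short bichromatic component $\{x, z\}$ (legal provided $\psi(x) \in L(z)$, which is precisely the slack afforded by $|L(v)| \geq d_H(v)$ rather than $d_H(v) - 1$) relocates the $c$-preimage to a new vertex; iterating and varying the starting rainbow through repeated invocations of $d_0$-choosability should eventually either move $c$ off $N_H(x) \cup N_H(y)$ (finishing via the Lemma \ref{IntersectionsInB}(1) contradiction) or produce a non-rainbow coloring of $H$ (finishing via the free-color-for-$u$ argument of step two). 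The delicate combinatorics is ensuring the iteration terminates and does not cycle --- this is where the extra slack $|L(u)| = |H|$, strictly exceeding every $|L(v)| \leq d_H(v) \leq |H| - 1$, is essential.
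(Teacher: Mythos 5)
Your opening analysis is sound but over-engineered: badness of $L$ already forces every proper $L$-coloring of $H$ to be injective with image exactly $L(u)$ (where $u$ is the $K_1$ vertex), since otherwise fewer than $\card{L(u)}$ colors of $L(u)$ appear on $N(u)=V(H)$ and $u$ could be colored; no Small Pot Lemma is needed, and the ``truncate $f$'' move is shaky anyway because $L$ is a minimal bad assignment for the given $f$, not for the truncated one, so facts about a truncated assignment do not transfer to $L$. Likewise the appeal to Lemma \ref{IntersectionsInB} is both not quite legitimate (that lemma concerns bad $d_1$-assignments, i.e.\ lists of size exactly $d_G(v)-1$, whereas $\card{L(u)}\geq d_G(u)$; you would have to truncate while retaining $c$, which you do not say) and unnecessary: if the unique $c$-colored vertex $z$ of a rainbow coloring $\psi$ satisfies $z\notin N(x)$, simply recolor $x$ with $c$ to get a non-injective proper coloring and a free color for $u$. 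This shows directly that $z\in N(x)\cap N(y)$ for every proper $L$-coloring, which is stronger than what you derive.

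The genuine gap is the remaining case, which is the entire content of the lemma, and your sketch does not prove it. The proposed Kempe exchange is not available: its legality requires $\psi(x)\in L(z)$, and nothing guarantees this --- the hypothesis $\card{L(v)}\geq d_H(v)$ constrains list sizes only, not which colors they contain, and the unavailability of Kempe swaps is exactly the standard obstruction in list coloring. Even granting some legal swaps, ``iterating and varying the starting rainbow through repeated invocations of $d_0$-choosability'' carries no progress measure and no termination argument; $d_0$-choosability only asserts the existence of some coloring and gives no control over where the color $c$ lands. A viable route is different in kind: color both $x$ and $y$ with $c$ and delete them; every $v\in V(H)-\set{x,y}$ keeps a residual list of size at least its degree in $H-\set{x,y}$, so a failure to complete the coloring (and hence to finish at $u$, since $c$ is now repeated) forces some component of $H-\set{x,y}$ to be a tight degree-assignment failure, whose structure (Gallai tree with equal tight lists, \`a la Borodin--Erd\H{o}s--Rubin--Taylor, together with the fact that such a component must attach to $x$ or $y$ and the pot-minimality of $L$) must then be exploited to reach a contradiction. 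Note also that the paper does not prove this lemma itself --- it imports it from \cite{mules} --- but as written your argument stops precisely where the real work begins.
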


\begin{lem}\label{neighborhood}
If $B$ is a graph with $\delta(B) \geq \frac{\card{B} + 1}{2}$ such that
$\join{K_1}{B}$ is not $d_1$-choosable, then $\omega(B) \geq \card{B} - 1$ or
$B = \join{E_3}{K_4}$.
\end{lem}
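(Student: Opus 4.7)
The proof strategy is by contradiction. Suppose $B$ fails the lemma: $\delta(B) \geq (|B|+1)/2$, $G \DefinedAs \join{K_1}{B}$ is not $d_1$-choosable, but $\omega(B) \leq |B|-2$ and $B \neq \join{E_3}{K_4}$. Write $n \DefinedAs |B|$ and $t \DefinedAs \omega(B)$. First I would set up the basic framework: if $B$ is complete then $\omega(B)=n$, done; otherwise $\delta(B) > n/2$ makes $B$ two-connected and neither an odd cycle nor complete, so $B$ is $d_0$-choosable by the choosability version of Brooks' theorem (the same observation used in Lemma \ref{LowVertexHighAverageDegree}). Take a minimal bad $d_1$-assignment $L$ on $G$; by the Small Pot Lemma $\card{Pot(L)} \leq n$, and the central vertex $x$ has $\card{L(x)} = n-1$. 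By Lemma \ref{IntersectionsInB}(1), every color class $B_c$ satisfies $\alpha(B_c) \leq 2$.

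The main structural move is to consider a maximum clique $K \subseteq V(B)$ of size $t$. If $K$ is completely joined in $B$ to $V(B) \setminus K$, then maximality of $K$ forces $V(B) \setminus K$ to be edgeless, so $B = \join{K_t}{E_{n-t}}$ and $G = \join{K_{t+1}}{E_{n-t}}$. Running through the enumeration in Lemma \ref{K_tClassification} and imposing both $\delta(B) \geq (n+1)/2$ and $\omega(B) \leq n-2$, only the case $t+1=5$, $E_{n-t}=E_3$ survives; this yields $n=7$ and $B = \join{E_3}{K_4}$, the excluded exception, giving a contradiction. So I may henceforth assume every maximum clique has a non-neighbor outside it; in particular $B$ contains a nonadjacent pair $\set{u,v}$ with $v$ in a max clique.

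Next I would split on whether some nonadjacent pair in $B$ has intersecting $L$-lists. Case A (no intersecting nonadjacent pair): every $B_c$ is a clique and so $\card{B_c} \leq t$; summing gives $2\size{B} = \sum_c \card{B_c} \leq \card{Pot(L)}\cdot t \leq n(n-2)$, while $\delta(B) \geq (n+1)/2$ yields $2\size{B} \geq n(n+1)/2$. The resulting near-tight counting, together with the fact that $\card{L(x)} = n-1 \geq \card{Pot(L)}-1$, lets me pick $c \in L(x)$ whose clique color class $B_c$ is smallest and color $x$ with $c$, then extend via $d_0$-choosability of $B$ after trimming $c$ from the lists of $B_c$—contradicting badness of $L$. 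Case B (some intersecting pair): Lemma \ref{NeighborhoodPotShrink} sharpens $\card{Pot(L)} \leq n-1$, and Lemma \ref{IntersectionsInB}(2) forces all disjoint intersecting nonadjacent pairs to share a single common color $c^*$ with intersection of size exactly one. Combined with $\Delta(\overline B) \leq (n-3)/2$ and $\alpha(B_{c^*}) \leq 2$, a careful counting of non-edges forces $\overline B$ to be exactly $K_3 \cup E_{n-3}$, so $B = \join{E_3}{K_{n-3}}$; but this is a join structure handled by the previous step, forcing $n=7$ and $B = \join{E_3}{K_4}$ and again contradicting our assumption.

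The main obstacle is Case B: chaining Lemma \ref{IntersectionsInB}(2) with the color-class bound $\alpha(B_{c^*}) \leq 2$ and the degree bound $\Delta(\overline B) \leq (n-3)/2$ to pin $\overline B$ down to precisely $K_3 \cup E_{n-3}$, rather than some slightly larger non-edge configuration, is delicate and needs careful combinatorial bookkeeping on how the intersecting non-edges overlap and how many vertices can legitimately carry $c^*$. A secondary subtlety is the Hall-type extension in Case A, where verifying that trimming $c$ from $B_c$'s lists leaves a $d_0$-colorable instance on $B$ relies on the near-equality in the counting inequality.
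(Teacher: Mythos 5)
Your toolkit is the right one (Small Pot Lemma, Lemma \ref{NeighborhoodPotShrink}, Lemma \ref{IntersectionsInB}, Lemma \ref{K_tClassification}, $d_0$-choosability of $B$), but the two steps that carry the proof do not go through as you describe. In your Case A, after coloring the central vertex with $c$ and deleting $c$ from the lists on $B_c$, the vertices of $B_c$ are left with lists of size $d_B(v)-1$, not $d_B(v)$, so $d_0$-choosability of $B$ does not finish the coloring; the ``near-tight counting'' does not repair this. What you missed is that Case A is vacuous: every vertex of $B$ has $\card{L(v)} = d_B(v) \geq \frac{\card{B}+1}{2}$, so for \emph{any} two vertices $\card{L(u)}+\card{L(v)} \geq \card{B}+1 > \card{Pot(L)}$, and since $\omega(B) \leq \card{B}-2$ there is a nonadjacent pair, whose lists therefore intersect. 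This same observation is the key to Case B, and it is exactly the point your plan lacks: once Lemma \ref{NeighborhoodPotShrink} gives $\card{Pot(L)} \leq \card{B}-1$, the same count yields $\card{L(u) \cap L(v)} \geq 2$ for \emph{every} nonadjacent pair. Hence if $B$ contains two disjoint nonadjacent pairs, Lemma \ref{IntersectionsInB}(2) is contradicted outright (each of its alternatives requires an empty or singleton intersection), and no ``careful counting of non-edges'' is needed. If $B$ has no two disjoint nonadjacent pairs, then the complement of $B$ has no matching of size two, so its edges form a star (giving $\omega(B) \geq \card{B}-1$) or a triangle (giving $B = \join{E_3}{K_{\card{B}-3}}$, whence Lemma \ref{K_tClassification} applied to $\join{K_{\card{B}-2}}{E_3}$ together with $\delta(B) \geq \frac{\card{B}+1}{2}$ forces $\card{B}=7$, i.e.\ the exceptional graph $\join{E_3}{K_4}$). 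The step you yourself flag as the ``main obstacle'' --- pinning $\overline{B}$ down by bookkeeping on overlapping non-edges --- is precisely the gap, and it is closed by the two-color intersection observation, not by counting.

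A secondary error: your ``main structural move'' is incoherent as stated. If a maximum clique $K$ were completely joined in $B$ to $V(B) \setminus K$, then any outside vertex would extend $K$, so that case forces $V(B)\setminus K = \emptyset$ (i.e.\ $B$ complete), not $B = \join{K_t}{E_{n-t}}$; in particular the exceptional graph does not arise there, and your Case B cannot ``punt'' to that step when it reaches $B = \join{E_3}{K_{\card{B}-3}}$. The correct place where $\join{E_3}{K_4}$ emerges is the no-two-disjoint-non-edges analysis above, as in the paper's proof.
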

\begin{proof}
Suppose the lemma is false and let $L$ be a minimal bad $d_1$-assignment on $B$.
First note that if $B$ does not contain disjoint nonadjacent pairs $x_1, y_1$
and $x_2, y_2$, then $\omega(B) \geq \card{B} - 1$ or
$B = \join{E_3}{K_4}$ by Corollary \ref{K_tClassification}.

By Dirac's theorem, $B$ is hamiltonian and in particular $2$-connected. Since
$B$ cannot be an odd cycle or complete, $B$ is $d_0$-choosable.

By the Small Pot Lemma, $\card{Pot(L)} \leq \card{B}$.  Since $\card{L(x_1)} +
\card{L(x_2)} \geq \card{B} + 1$, their lists intersect and thus Lemma
\ref{NeighborhoodPotShrink} shows that $\card{Pot(L)} \leq \card{B} - 1$. But
then $\card{L(x_i) \cap L(y_i)} \geq 2$ for each $i$ and Lemma
\ref{IntersectionsInB} gives a contradiction.
\end{proof}

We'll need the following simple consequence of the pigeonhole principle.

\begin{lem}\label{BasicFiniteSets}
Let $r \in \IN_{\geq 1}$. If $S_1, \ldots, S_m$ are subsets of a finite set $T$ with $\card{S_i} \geq r$ for each $i \in \irange{r}$ and $\sum_{i \in \irange{m}} \card{S_i} \geq (m - 1)\card{T} + r$, then $\card{\bigcap_{i \in \irange{m}} S_i} \geq r$.
\end{lem}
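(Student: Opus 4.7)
The plan is to pass to complements. I would set $S_i^c \DefinedAs T \setminus S_i$ and observe by De Morgan that $\card{\bigcap_{i \in \irange{m}} S_i} = \card{T} - \card{\bigcup_{i \in \irange{m}} S_i^c}$, so the conclusion $\card{\bigcap_{i \in \irange{m}} S_i} \geq r$ is equivalent to the upper bound $\card{\bigcup_{i \in \irange{m}} S_i^c} \leq \card{T} - r$. The union bound gives $\card{\bigcup_{i \in \irange{m}} S_i^c} \leq \sum_{i \in \irange{m}} \card{S_i^c} = m\card{T} - \sum_{i \in \irange{m}} \card{S_i}$, and plugging the hypothesis $\sum_{i \in \irange{m}} \card{S_i} \geq (m-1)\card{T} + r$ into the right-hand side immediately collapses it to $\card{T} - r$, which is what we need.

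Notably, the individual lower bounds $\card{S_i} \geq r$ play no role in this argument (and the indexing ``$i \in \irange{r}$'' in the hypothesis appears to be a typo for $i \in \irange{m}$); only the sum condition is used, combined with the trivial fact $\card{A \cup B} \leq \card{A} + \card{B}$. There is no substantive obstacle here --- the statement is essentially the pigeonhole principle phrased via complements, and the whole proof is a two-line cardinality computation once one takes complements on both sides.
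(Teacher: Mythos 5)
Your proof is correct, and since the paper offers no proof of this lemma (it is merely cited as ``a simple consequence of the pigeonhole principle''), the complement-plus-union-bound argument you give is exactly the standard way to see it. You are also right that the hypothesis $\card{S_i}\geq r$ is never used and that the quantifier ``$i\in\irange{r}$'' should read ``$i\in\irange{m}$''.
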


\begin{lem}\label{BigIndependentJoin}
$\join{K_r}{E_{k+2}}$ is $d_k$-choosable when $k \geq 0$ and $r \geq (k+1)(k+2)$.
\end{lem}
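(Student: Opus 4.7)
The plan is to proceed by contradiction: assume $L$ is a bad $d_k$-assignment on $G = \join{K_r}{E_{k+2}}$, and write $U = \set{u_1, \ldots, u_{k+2}}$ for the vertices of $E_{k+2}$. The Small Pot Lemma applies because $d(v) - k < \card{G} = r + k + 2$ for every $v$, so we may take $L$ to be minimal, and then $p \DefinedAs \card{Pot(L)} \leq r + k + 1$. For every $v \in V(K_r)$ we then have $\card{L(v)} = r+1$, and for every $u_i$ we have $\card{L(u_i)} = r - k$, so $\card{Pot(L) \setminus L(u_i)} \leq 2k+1$.

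The central strategy is to color every $u_i$ with a single common color $c^* \in \bigcap_i L(u_i)$. If such a $c^*$ exists, then $K_r$ inherits lists $L(v) \setminus \set{c^*}$ of size at least $r$, Hall's condition on $V(K_r)$ is trivial (each list is at least as large as the clique), and an SDR yields a proper coloring of $G$, contradicting the badness of $L$. The whole task therefore reduces to producing $c^* \in \bigcap_i L(u_i)$.

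In the tight case $p = r+1$, every $L(v)$ with $v \in V(K_r)$ is forced to equal $Pot(L)$ and each $\card{Pot(L) \setminus L(u_i)} \leq k+1$; a union bound gives $\card{\bigcap_i L(u_i)} \geq p - (k+2)(k+1) = r + 1 - (k+1)(k+2) \geq 1$, and this is precisely where the hypothesis $r \geq (k+1)(k+2)$ is consumed. The main obstacle is the residual case $p > r+1$, where the union bound is too weak and $\bigcap_i L(u_i)$ may really be empty. There I would instead look for a set $T \subseteq Pot(L)$ of size $p - r$ that meets every $L(u_i)$, using colors from $Pot(L) \setminus P_K$ first, where $P_K \DefinedAs \bigcup_{v \in V(K_r)} L(v)$; any color in $Pot(L) \setminus P_K$ automatically lies in some $L(u_i)$ and can be added to $T$ without affecting $V(K_r)$'s lists. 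Then color each $u_i$ from $T \cap L(u_i)$ and color $K_r$ from $L(v) \setminus T$.

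The hardest step will be simultaneously verifying Hall's condition on $V(K_r)$ with the reduced lists $L(v) \setminus T$ for every $W \subseteq V(K_r)$: the $W = V(K_r)$ case forces $\card{T \cap P_K}$ to be exactly $\card{P_K} - r$, so $T$ must contain all of $Pot(L) \setminus P_K$ together with exactly $\card{P_K} - r$ colors from $P_K$; the smaller $W$'s then require that no vertex of $V(K_r)$ has too many ``uniquely held'' colors outside the remaining $L(v) \setminus T$. Completing this residual case will likely need a secondary case split on $\card{P_K}$ and on $\setbs{i}{L(u_i) \subseteq P_K}$, leveraging both the bound $\card{Pot(L) \setminus L(u_i)} \leq 2k+1$ and the hypothesis $r \geq (k+1)(k+2)$ a second time to guarantee that a suitable $T_K \subseteq P_K$ of size $\card{P_K} - r$ hits every $L(u_i)$ contained in $P_K$.
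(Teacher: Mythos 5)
Your setup matches the paper's: minimal bad $d_k$-assignment $L$, $p := \card{Pot(L)} \leq r+k+1$ by the Small Pot Lemma, and list sizes $r+1$ on $K_r$ and $r-k$ on $E_{k+2}$. Your tight case $p = r+1$ also matches the paper's final step: when $p = r+1$ a union bound (equivalently Lemma~\ref{BasicFiniteSets}) yields a common color on $E_{k+2}$, and coloring all of $E_{k+2}$ with it leaves $K_r$ with lists large enough to finish.

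The residual case $p > r+1$ is a genuine gap, and it is exactly the case the paper takes pains to eliminate. Your proposed $T$-based strategy has two unresolved obstacles. First, you never establish that a hitting set $T$ of size exactly $p-r$ for $L(u_1), \ldots, L(u_{k+2})$ exists at all, let alone one that also contains $Pot(L) \setminus P_K$; a cardinality count shows this is not automatic (each complement $Pot(L) \setminus L(u_i)$ can be as large as $2k+1 > p - r$, so a putative $T$ can be swallowed entirely by a single complement). Second, even granting $T$, Hall's condition on $V(K_r)$ with the depleted lists $L(v) \setminus T$ is not verified for intermediate $W$; the naive bound only gives $\card{L(v) \setminus T} \geq 2r+1-p \geq r-k$, leaving $k$ sizes of $W$ uncontrolled. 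You flag this as requiring a ``secondary case split,'' but that split is where the real content of the residual case lives, and nothing in the proposal suggests how to close it.

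The paper avoids the residual case entirely via two ideas you did not use. It first applies Lemma~\ref{CannotColorSelfWithSelf} to conclude that every color appearing on $E_{k+2}$ also appears on $K_r$. It then splits $E_{k+2}$ into two roughly equal halves $X, Y$; because each half has only about $(k+2)/2$ vertices, a single summation bound (via Lemma~\ref{BasicFiniteSets}) produces a common color $c_1$ for $X$ and $c_2$ for $Y$ \emph{regardless} of whether $p$ is tight. Coloring $X$ with $c_1$ and $Y$ with $c_2$ removes at most two colors from each $K_r$-list; if the remaining lists are not all identical, Hall's condition on the clique is immediate, and if they are identical, the $B \subseteq A$ observation forces $p = r+1$, which is your tight case. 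The key trick you are missing is thus using two colors on $E_{k+2}$ rather than one: the smaller piece sizes make the union-type bound work for all $p$, and the fallback case is exactly the one you can already handle.
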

\begin{proof}
Suppose not and let $L$ be a minimal bad $d_k$-assignment on $\join{K_r}{E_{k+2}}$ where $r = (k+1)(k+2)$.  By the Small Pot Lemma, we have $\card{Pot(L)} \leq r + k + 1$.  For each $v$ in the $K_r$ we have $\card{L(v)} = r + 1$ and for each $v$ in the $E_{k+2}$ we have $\card{L(v)} = r - k$.  Let $A$ be the colors appearing on the $K_r$ and $B$ the colors appearing on the $E_{k+2}$.  By Lemma \ref{CannotColorSelfWithSelf} we must have $B \subseteq A$.  Let $\set{X,Y}$ be a partition of $E_{k+2}$ with $\card{X} \geq \card{Y}$ and $\card{X} - \card{Y} \leq 1$.  Now $\sum_{x \in X} \card{L(x)} = \card{X}(r-k) \geq \card{X}(k^2+ 2k + 2) = (\card{X}-1)(k^2 + 4k + 3) - \card{X}(2k + 1) + k^2 + 4k + 3 \geq (\card{X} - 1)\card{Pot(L)} - \frac{k+3}{2}(2k + 1) + k^2 + 4k + 3 > (\card{X} - 1)\card{Pot(L)}$.  Hence Lemma \ref{BasicFiniteSets} gives $c_1 \in \bigcap_{x \in X} L(x)$.  Similarly, we have $c_2 \in \bigcap_{x \in Y} L(x)$.  Color all of $X$ with $c_1$ and all of $Y$ with $c_2$.  This leaves each vertex in $K_r$ with a list of size at least $r-1$.  We could complete the coloring if two of these lists were different.  Hence they are all the same and thus $L(x) = L(y)$ for all $x,y \in K_r$.  Since $B \subseteq A$, we must have $\card{Pot(L)} = r+1$.  But then $\sum_{x \in E_{k+2}} \card{L(x)} = (k+2)(r-k) \geq (k+2)(k^2 + 2k + 2) = (k+1)(k^2 + 3k + 3) + 1 > (k+1)\card{Pot(L)}$ and hence we have $c \in \bigcap_{x \in E_{k+2}} L(x)$.  Now we may color all of $E_{k+2}$ with $c$ and finish the coloring on $K_r$, a contradiction.
\end{proof}

The following lemma was proved in \cite{mules}.
\begin{lem}\label{GeneralCliqueJoin}
Let $k \geq 1$.  If $B$ is a graph such that $\join{K_{3k+1}}{B}$ is not $d_k$-choosable, then $\omega(B) \geq \card{B} - 2k$.
\end{lem}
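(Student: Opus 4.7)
The plan is to argue by contradiction: assume $G \DefinedAs \join{K_{3k+1}}{B}$ is not $d_k$-choosable yet $\omega(B) \leq \card{B} - 2k - 1$, and take a minimal bad $d_k$-assignment $L$. The Small Pot Lemma forces $\card{Pot(L)} \leq 3k + \card{B}$. Vertices $v$ in the $K_{3k+1}$ then satisfy $\card{L(v)} = 2k + \card{B}$ and so miss at most $k$ colors of the pot; vertices $u \in V(B)$ satisfy $\card{L(u)} = d_B(u) + 2k + 1$.

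The guiding idea is a two-phase coloring. If one can properly list-color $B$ from $L$ using at most $\card{B} - k - 1$ distinct colors, then every $v$ in the $K_{3k+1}$ is left with a list of size at least $2k + \card{B} - \parens{\card{B} - k - 1} = 3k + 1$, so a greedy extension handles the clique. The task therefore reduces to producing at least $k + 1$ \emph{savings}---merges of color classes---in a proper list coloring of $B$.

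To build such savings, I would exploit $\omega(B) \leq \card{B} - 2k - 1$ by fixing a maximum clique $Q \subseteq B$ and setting $R \DefinedAs V(B) \setminus Q$; then $\card{R} \geq 2k + 1$ and each $v \in R$ has a non-neighbor $q_v \in Q$ (else $Q \cup \set{v}$ would be a larger clique). The pair $v, q_v$ is mergeable exactly when $L(v) \cap L(q_v) \neq \emptyset$, which via $\card{L(v)} + \card{L(q_v)} > \card{Pot(L)}$ reduces to a degree-sum bound of the form $d_B(v) + d_B(q_v) \geq \card{B} - k - 1$. In the high-minimum-degree regime this holds on every such pair at once, and Lemma \ref{BasicFiniteSets} together with a Hall-type argument lets one pick mutually consistent shared colors on many disjoint non-edges simultaneously, producing well more than $k + 1$ savings.

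The main obstacle is the low-degree regime, where the degree-sum inequality can fail for some pairs. My plan is to split on $\delta(B)$: whenever some $u \in V(B)$ has small $B$-degree, the set $V(B) \setminus N_B(u)$ is large and contains an independent set of size at least $k + 2$; then Lemma \ref{BigIndependentJoin} applied to an appropriate clique-join sitting inside $G$ yields a $d_k$-choosable induced subgraph, contradicting minimality of $L$ (equivalently, furnishes the required savings in a single stroke). Stitching the two regimes together, and verifying that the chosen shared colors assemble into one proper coloring of $B$ rather than conflicting with each other, is the technical heart of the argument.
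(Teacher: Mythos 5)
The paper does not actually give its own proof of Lemma~\ref{GeneralCliqueJoin}; it cites it from \cite{mules}. The nearest in-text analogues are Lemma~\ref{TwoCliques} and Lemma~\ref{HighMinDegreeGivesClique}, and both follow the same philosophy you propose: accumulate enough ``savings'' by merging endpoints of non-edges that share a list color, then finish greedily on the attached clique. So the plan is in the right spirit.

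Two concrete steps in your sketch, however, do not go through. First, pairing each $v \in R = V(B) \setminus Q$ with a non-neighbor $q_v \in Q$ does not produce vertex-disjoint non-edges: several $v$'s may choose the same $q_v$, and $R$ itself is not independent, so the intended merges collide with one another. The paper's related proofs instead start from a \emph{matching} $M$ in $\overline{B}$ --- a maximal collection of pairwise-disjoint non-edges. Then $B - \bigcup M$ is a clique by maximality, so a small matching ($\card{M} \leq k$) immediately yields $\omega(B) \geq \card{B} - 2\card{M} \geq \card{B} - 2k$, while a large matching feeds the savings count without any collision issues. That should be your starting decomposition, not a fixed maximum clique.

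Second, the low-degree escape is not sound. Having $\card{V(B) \setminus N_B(u)}$ large does \emph{not} give an independent set of size $k+2$: greedy only yields $\alpha \geq \card{V(B) \setminus N_B(u)}/\omega(B)$, which is useless precisely in the regime you are fighting (where $\omega(B)$ is only slightly below $\card{B} - 2k$). Moreover, even granting such an independent set, Lemma~\ref{BigIndependentJoin} requires the clique factor to have at least $(k+1)(k+2)$ vertices, and $(k+1)(k+2) > 3k+1$ for every $k \geq 1$. You would therefore need extra clique vertices inside $B$ fully joined to the independent set, which is not given. So this branch needs a genuinely different argument (e.g.\ forcing the degree-sum condition on the matching pairs to hold, or an inductive deletion of the low-degree vertex with the minimality of $L$). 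As you note, the ``stitching'' is the technical heart; as written it is still open.
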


With almost an identical proof we get the following extension that allows us to handle vertices of less than maximum degree more efficiently.
\begin{lem}\label{GeneralCliqueJoinLow}
Let $1 \leq j \leq k$.  Let $B$ be a graph and $L$ a list assignment on $G \DefinedAs \join{K_{3k+1}}{B}$ such that $\card{L(v)} \geq d(v)-j$ for $v \in V(K_{3k+1})$ and $\card{L(v)} \geq d(v)-k$ for $v \in V(B)$.  If $L$ is bad on $G$, then $\omega(B) \geq \card{B} - 2j$.
\end{lem}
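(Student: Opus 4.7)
The plan is to replay the proof of Lemma~\ref{GeneralCliqueJoin} (the uniform case $j = k$) almost verbatim, while tracking how the stronger bound $\card{L(v)} \geq d(v) - j$ on the vertices of $K_{3k+1}$ tightens the conclusion from $\card{B} - 2k$ up to $\card{B} - 2j$.

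After passing to a minimal bad list assignment $L$, the Small Pot Lemma gives $\card{Pot(L)} \leq \card{G} - 1 = 3k + \card{B}$. Writing $A \DefinedAs V(K_{3k+1})$, I would suppose for contradiction that $\omega(B) \leq \card{B} - 2j - 1$ and extract a maximal family $\set{\set{x_i, y_i}}_{i=1}^{s}$ of pairwise disjoint independent pairs in $B$. Maximality forces $K \DefinedAs B - \bigcup_i \set{x_i, y_i}$ to be a clique, so $\card{B} - 2s \leq \omega(B)$ and hence $s \geq j + 1$.

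Next I would pair-colour: choose distinct colours $c_1, \ldots, c_s$ with $c_i \in L(x_i) \cap L(y_i)$. Each pair supplies the size bound $\card{L(x_i)} + \card{L(y_i)} \geq 4k + 2 + d_B(x_i) + d_B(y_i)$, which combined with the pot estimate and a global application of Lemma~\ref{BasicFiniteSets} across the pairs furnishes such a selection (mirroring the argument of the uniform case). Colouring each pair monochromatically with its $c_i$ reduces the task to extending the colouring across the complete graph $G[K \cup A]$ from the punctured lists.

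Finally I would finish on $G[K \cup A]$ by Hall's theorem: each $v \in A$ retains a list of size at least $d_G(v) - j - s$ and each $v \in K$ at least $d_G(v) - k - s$, against a clique of size $3k + 1 + \card{K} = \card{G} - 2s$. The slack $k - j$ on the $A$-side is exactly what is needed to absorb the drop from $k + 1$ to $j + 1$ pairs. I expect the main obstacle to be precisely this final bookkeeping: threading the $(k - j)$-worth of extra list space on $A$ through the Hall/pigeonhole estimates without wasting it, so that replacing $2k$ by $2j$ in the conclusion really is justified.
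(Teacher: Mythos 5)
The paper gives no standalone proof here: it simply asserts that the argument for Lemma~\ref{GeneralCliqueJoin} (itself cited to \cite{mules} without proof in this paper) goes through with $d(v)-j$ in place of $d(v)-k$ on the $K_{3k+1}$ side. Your high-level plan --- replay that proof and track how the sharper clique-side lists convert $2k$ into $2j$ --- is exactly the paper's stated intent, so at that level of abstraction the two are the same.

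However, the internals you sketch have a genuine gap, and it is in the very place you flag as ``the main obstacle.'' Your color-selection step requires $c_i \in L(x_i) \cap L(y_i)$ for a pre-chosen maximal family of disjoint nonadjacent pairs, but the only estimate you have is $\card{L(x_i)}+\card{L(y_i)} \geq d_B(x_i)+d_B(y_i)+4k+2$ against $\card{Pot(L)} \leq 3k + \card{B}$, which yields a nonempty intersection only when $d_B(x_i)+d_B(y_i) \geq \card{B}-k-1$. Nothing in the hypotheses of this lemma (unlike Lemma~\ref{HighMinDegreeGivesClique}, which has an explicit $\delta(B)$ bound, or Lemma~\ref{TwoCliques}, which has $\alpha(B)\leq 2$) forces that: two low-degree vertices of $B$ can have tiny, even disjoint, lists. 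For the same reason, ``finish on $G[K\cup A]$ by Hall'' is not automatic --- a vertex of $K$ need only retain a residual list of size about $\card{K}+2k$, which is below the clique size $\card{K}+3k+1$, so the easy sufficient condition for Hall fails and a more careful count (in the style of the $l(v)$-count in Lemma~\ref{HighMinDegreeGivesClique}) is needed. The role of the sharper $A$-side lists is actually simpler than you suggest: each $v \in A$ has $\card{L(v)} \geq 3k+\card{B}-j$, so $A$ can be completed after any coloring of $B$ using at most $\card{B}-j-1$ colors; the whole content of the proof is then showing that if $\omega(B) \leq \card{B}-2j-1$ one can produce such a $B$-coloring, and this requires a more adaptive scheme (e.g.\ iterating via Lemma~\ref{ConnectedPot} as in the proof of Lemma~\ref{TwoCliques}) rather than a single global pigeonhole over a fixed set of pairs.
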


To prove our final list coloring lemma, we need another tool from $\cite{mules}$.

\begin{lem}\label{ConnectedPot}
Fix $k \geq 1$. Let $A$ be a connected graph and $B$ an arbitrary graph such that $\join{A}{B}$ is not $d_k$-choosable.  Let $L$ be a minimal bad $d_k$-assignment on $\join{A}{B}$.  If $B$ is colorable
from $L$ using at most $\card{B} - k$ colors, then $\card{Pot(L)} \leq \card{A} + \card{B} - 2$.
\end{lem}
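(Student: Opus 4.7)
The plan is to proceed by contradiction. The Small Pot Lemma already gives $\card{Pot(L)} \leq \card{A}+\card{B}-1$, so assume for contradiction that $\card{Pot(L)} = \card{A}+\card{B}-1$, and aim to contradict the minimality of $L$ by building a bad $d_k$-assignment with strictly smaller pot. Fix a proper coloring $\phi$ of $B$ from $L$ using a color set $T$ with $\card{T} \leq \card{B}-k$, as given by hypothesis.

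The first observation is that $\card{Pot(L) \setminus T} \geq \card{A}+k-1 \geq \card{A} \geq 1$, so I can pick a color $c^* \in Pot(L) \setminus T$. The idea is to ``retire'' $c^*$ from the pot. Define a new list assignment $L^*$ on $\join{A}{B}$ by, for every vertex $u$ with $c^* \in L(u)$, swapping $c^*$ out of $L(u)$ and swapping in a carefully chosen color $c_u \in T \setminus L(u)$. This keeps $\card{L^*(u)} = \card{L(u)} = d(u)-k$, so $L^*$ is a $d_k$-assignment, and forces $Pot(L^*) \subseteq Pot(L) \setminus \{c^*\}$, hence $\card{Pot(L^*)} < \card{Pot(L)}$. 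The existence of a suitable $c_u \in T \setminus L(u)$ is where the coloring $\phi$ is used: on $B$ the color $\phi(u)$ (or something derived from it, together with the fact that $\card{L(u)}$ is large compared to $\card{T}$) provides a template for the choice, and on $A$ the large list size $\card{L(u)} = d_A(u)+\card{B}-k$ leaves enough flexibility relative to $\card{T} \leq \card{B}-k$.

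By the minimality of $L$, the smaller-pot assignment $L^*$ is good; let $\psi$ be a proper $L^*$-coloring of $\join{A}{B}$. Since $c^* \notin Pot(L^*)$, $\psi$ never uses $c^*$. I then recover a proper $L$-coloring of $\join{A}{B}$ from $\psi$ as follows: at every vertex $u$ with $\psi(u) = c_u$ satisfying $c_u \notin L(u)$, reassign $\psi(u) := c^*$, which lies in $L(u)$ by the construction of $L^*$. Since $c^*$ was previously unused by $\psi$, the reassignment cannot conflict with a neighbor still colored by $\psi$; the only possible conflict is between two adjacent vertices that are both being reassigned to $c^*$.

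The main obstacle is exactly this last point: choosing the replacement colors $c_u$ consistently so that adjacent pairs are never simultaneously ``bad'' in the sense above. The hypothesized coloring $\phi$ of $B$ is crucial here, because it provides a ready-made conflict-free assignment on $B$ using few colors, which one can piggyback on to enforce consistency on the $B$-side; connectedness of $A$ enters in propagating a consistent choice across $A$. Making this consistency argument rigorous is the delicate step, and it is precisely what turns the Small Pot Lemma bound $\card{A}+\card{B}-1$ into the sharper bound $\card{A}+\card{B}-2$ under the extra hypothesis on $B$.
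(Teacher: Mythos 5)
The paper does not actually prove Lemma \ref{ConnectedPot} in-line; it is cited from \cite{mules} as a tool. So I can only judge your proposal on its own merits, and there are two genuine gaps.

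First, the construction of $L^*$ is not well-defined because the replacement color $c_u \in T \setminus L(u)$ may simply not exist. For $u \in A$ we have $\card{L(u)} = d_A(u) + \card{B} - k \geq \card{B} - k \geq \card{T}$, so $L(u)$ is at least as large as $T$, and there is no obstruction to $T \subseteq L(u)$. In fact this containment is exactly what the hypotheses tend to force in the tight case: once you fix $\phi$ on $B$ and restrict each $L(u)$ with $u \in A$ to $L(u) \setminus T$, the minimality of $L$ combined with $A$ being connected pushes you to $\card{L(u)\setminus T} = d_A(u)$ for all $u \in A$, which (given $\card{T}\leq\card{B}-k$) gives $T\subseteq L(u)$. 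So $T\setminus L(u)=\emptyset$ precisely on the $A$-side, and your swap has nowhere to put the new color. Your heuristic that the large list size ``leaves enough flexibility'' actually cuts the wrong way: larger lists make $T\subseteq L(u)$ more likely, not less.

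Second, even granting the swap, the recoloring step is the crux and you leave it unresolved. After applying $\psi$ and reassigning any $u$ with $\psi(u)=c_u\notin L(u)$ back to $c^*$, two adjacent vertices $u_1,u_2$ that were both reassigned now clash at $c^*$; since $c_{u_1}\neq c_{u_2}$ in general, the properness of $\psi$ gives you nothing here. The usual clean fix is to use a \emph{single} replacement color $c_0$ for every affected vertex (so that two affected neighbors would already have clashed under $\psi$), but then you need $c_0\in T$ avoiding $\bigcup_{u: c^*\in L(u)} L(u)$, which is exactly the kind of disjointness you have no control over. You flag this as ``delicate'' and appeal to the coloring $\phi$ and the connectedness of $A$, but neither is actually invoked in the argument: $\phi$ appears only to define $T$, and connectedness of $A$ plays no role at all in your sketch. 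As it stands this is an outline of a strategy, not a proof, and the strategy as stated breaks at both the swap step and the consistency step.
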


\begin{lem}\label{TwoCliques}
Let $k \geq 1$.  If $B$ is the complement of a bipartite graph and $\join{K_{3k+1}}{B}$ is not $d_k$-choosable, then $\omega(B) \geq \card{B} - k$.
\end{lem}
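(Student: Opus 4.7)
The plan is by contradiction. Assume $\omega(B) \leq \card{B} - k - 1$, and let $L$ be a minimal bad $d_k$-assignment on $G := \join{K_{3k+1}}{B}$. Write $V(B) = X \cup Y$ as the bipartition of $\bar{B}$ into cliques of $B$, and set $K := V(K_{3k+1})$, so $\card{L(v)} = 2k + \card{B}$ for $v \in K$ and $\card{L(v)} = 2k + 1 + d_B(v)$ for $v \in V(B)$. By the Small Pot Lemma, $\card{Pot(L)} \leq \card{G} - 1 = 3k + \card{B}$. Since $\bar{B}$ is bipartite, König's theorem gives $\nu(\bar{B}) = \card{B} - \omega(B) \geq k + 1$, so I fix $k+1$ vertex-disjoint non-edges $\{x_1, y_1\}, \ldots, \{x_{k+1}, y_{k+1}\}$ of $B$ with $x_i \in X$ and $y_i \in Y$ coming from a matching in $\bar{B}$.

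The strategy is to build an $L$-coloring of $B$ using at most $\card{B} - k - 1$ distinct colors; such a coloring forces a contradiction because each $v \in K$ then has $\card{L(v)} - (\card{B} - k - 1) \geq 3k + 1 = \card{K}$ colors still usable, and Hall's theorem applied to the clique $K$ (with lists of size $\geq \card{K}$) completes an $L$-coloring of $G$, contradicting that $L$ is bad. The $B$-coloring will be built by picking distinct colors $c_1, \ldots, c_{k+1}$ with $c_i \in L(x_i) \cap L(y_i)$, coloring both $x_i$ and $y_i$ with $c_i$, and coloring each remaining vertex of $B$ with a distinct fresh color from $L(v) \setminus \{c_1, \ldots, c_{k+1}\}$; properness holds because $X$ and $Y$ are each cliques (requiring distinct colors, which we supply) and the only cross-clique color collisions are at the non-edges $\{x_i, y_i\}$.

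The main obstacle is guaranteeing that the $c_i$'s exist. The Small Pot bound together with the clique structure $d_B(x_i) + d_B(y_i) \geq (\card{X} - 1) + (\card{Y} - 1) = \card{B} - 2$ yields only $\card{L(x_i) \cap L(y_i)} \geq k$ by inclusion-exclusion, which is one short of what Hall's theorem needs for $k+1$ pairs. I plan to close this gap by bootstrapping via Lemma \ref{ConnectedPot}. First, I produce an $L$-coloring of $B$ that uses at most $\card{B} - k$ distinct colors by sharing colors on only $k$ of the matching pairs; this weaker task needs only Hall's on $k$ pair-lists of size $\geq k$, together with a Hall's application to the remaining $\card{B} - 2k$ vertices whose lists after removing the $k$ shared colors have size $\geq k + 1 + d_B(v)$. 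Lemma \ref{ConnectedPot} applied to the connected factor $A := K_{3k+1}$ then yields $\card{Pot(L)} \leq \card{A} + \card{B} - 2 = 3k - 1 + \card{B}$. Redoing the inclusion-exclusion with this sharpened pot gives $\card{L(x_i) \cap L(y_i)} \geq k + 1$, and Hall's theorem on the bipartite system $\{1, \ldots, k+1\} \to L(x_i) \cap L(y_i)$ produces the required distinct $c_i$'s. The final Hall's application for the remaining $\card{B} - 2(k+1)$ vertices uses $\card{L(v) \setminus \{c_1, \ldots, c_{k+1}\}} \geq k + d_B(v)$, which comfortably dominates the number of uncolored neighbors in $v$'s own clique side, so the SDR needed to finish the coloring of $B$ is realized.
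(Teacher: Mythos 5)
Your proposal follows the paper's proof essentially step for step: take $k+1$ disjoint non-edges from a maximum matching in $\bar B$ (via K\H{o}nig, equivalently via perfection of $B$ as the paper phrases it), use the Small Pot Lemma to get $\card{L(x_i)\cap L(y_i)}\geq k$, first exhibit a coloring of $B$ with at most $\card{B}-k$ colors by pairing up only $k$ of the non-edges, feed that into Lemma \ref{ConnectedPot} to shave one off the pot bound, then upgrade to $\card{L(x_i)\cap L(y_i)}\geq k+1$ and pair all $k+1$ non-edges, leaving every vertex of the $K_{3k+1}$ with $\geq 3k+1$ available colors. The only differences are cosmetic (you re-pick all $k+1$ shared colors via Hall's theorem, whereas the paper keeps $c_1,\dots,c_k$ and selects just $c_{k+1}$ from the now-larger intersection), plus a slight overstatement on your part: you claim the remaining vertices of $B$ receive \emph{distinct} fresh colors via an SDR, but what the bound $\card{L(v)\setminus\{c_1,\dots,c_{k+1}\}}\geq k+d_B(v)>d_B(v)$ actually guarantees is a greedy proper coloring of the rest of $B$ — which uses at most one new color per vertex and hence still stays within the $\card{B}-k-1$ budget, so the argument closes anyway without the SDR claim.
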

\begin{proof}
Suppose not, let $B$ be such a graph and $L$ a minimal bad $d_k$-assignment on $\join{K_{3k+1}}{B}$. Let $A$ be the $K_{3k+1}$. Let $M \DefinedAs \set{\set{x_1, y_1}, \ldots, \set{x_t, y_t}}$ be a maximum matching in the complement of $B$.  If $t \leq k$, then since $B$ is perfect we have $\omega(B) = \chi(B) \geq \card{B} - k$ giving a contradiction.

Hence $t \geq k+1$.  For $i \in \irange{t}$ we have $\card{L(x_i)} + \card{L(y_i)} \geq d_B(x_i) + d_B(y_i) + 4k+2 \geq \card{B} - 2 + 4k + 2 = \card{B} + 4k$ since $\alpha(B) \leq 2$.  By the Small Pot Lemma, we have $\card{Pot(L)} \leq \card{A} + \card{B} - 1 = \card{B} + 3k$.  Hence we have different colors $c_1, \ldots, c_k$ such that $c_i \in L(x_i) \cap L(y_i)$ for each $i \in \irange{k}$. For each such $i$, color each of $x_i, y_i$ with $c_i$.  Then we can complete the coloring on $B$ since $\card{A} \geq k+1$.  We just colored $B$ with at most $\card{B} - k$ colors and hence applying Lemma \ref{ConnectedPot} gives $\card{Pot(L)} \leq \card{B} + 3k - 1$.  So, now we can pick $c_{k+1} \in L(x_{k+1}) \cap L(y_{k+1}) - \set{c_1, \ldots, c_k}$, color each of $x_i, y_i$ with $c_i$ for $i \in \irange{k+1}$, then complete the coloring to $B$.  Now each vertex in $A$ has at least $k+1$ colors used twice on its neighborhood, so we can complete the coloring, a contradiction.
\end{proof}

\section{Independent transversals}
For completeness we include the proof of the main independent transveral lemma used above. In \cite{haxell2006odd}, Haxell and Szab{\'o} developed a technique for
dealing with independent transversals.  In \cite{haxell2011forming}, Haxell used
this technique to give simpler proof of her transversal lemma. The proof gives a bit more and we record that here.  This is just slightly more general than
the extension given in \cite{aharoni2007independent} by Aharoni, Berger and
Ziv. We write $\funcsurj{f}{A}{B}$ for a surjective function from $A$ to $B$.  Let $G$ be a graph.  For a $k$-coloring $\funcsurj{\pi}{V(G)}{\irange{k}}$ of $G$ and a subgraph $H$ of $G$ we say that $I \DefinedAs \set{x_1, \ldots, x_k} \subseteq V(H)$ is an $H$-independent transversal of $\pi$ if $I$ is an independent set in $H$ and $\pi(x_i) = i$ for all $i \in \irange{k}$.

\begin{lem}\label{BaseTransversalLemma}
Let $G$ be a graph and $\funcsurj{\pi}{V(G)}{\irange{k}}$ a proper $k$-coloring of
$G$.  Suppose that $\pi$ has no $G$-independent transversal, but for every $e
\in E(G)$, $\pi$ has a $(G-e)$-independent transversal. Then for every $xy \in
E(G)$ there is $J \subseteq \irange{k}$ with $\pi(x), \pi(y) \in J$ and an 
induced matching $M$ of $G\brackets{\pi^{-1}(J)}$ with $xy \in M$ such that:
\begin{enumerate}
  \item $\bigcup M$ totally dominates $G\brackets{\pi^{-1}(J)}$,
  \item the multigraph with vertex set $J$ and an edge between $a, b \in J$ for
  each $uv \in M$ with $\pi(u) = a$ and $\pi(v) = b$ is a (simple) tree.  In
  particular $\card{M} = \card{J} - 1$.
\end{enumerate}
\end{lem}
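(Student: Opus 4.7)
The plan is to adapt the alternating-tree argument of Haxell and Szab\'o~\cite{haxell2006odd} that underlies the classical independent-transversal lemma, carrying the extra bookkeeping needed to realize the tree on $J$ and to force $xy$ into the matching.

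I will grow a tuple $(J, M, I)$ step by step, starting from $J_0 \DefinedAs \{\pi(x), \pi(y)\}$, $M_0 \DefinedAs \{xy\}$, and an arbitrary $(G - xy)$-independent transversal $I_0$ of $\pi$. Note that $x, y \in I_0$, for otherwise $I_0$ would already be a $G$-independent transversal. Throughout the construction I maintain the invariants that $M$ is an induced matching of $G[\pi^{-1}(J)]$ containing $xy$, the color-multigraph on $J$ with edges from $M$ is a tree (rooted at $\pi(x)$ for the analysis), and $I$ is a $(G-xy)$-independent transversal of $\pi$ whose color-$c$ vertex for each non-root $c \in J$ is the ``child endpoint'' of the matching edge realizing $c$'s tree-edge up to its parent.

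Suppose the invariants hold and $\bigcup M$ fails to totally dominate $G[\pi^{-1}(J)]$. Pick a witness $v \in \pi^{-1}(J)$ with no $G$-neighbor in $\bigcup M$ (so $v \notin \bigcup M$, since matched vertices are adjacent to their partners). Let $u \in I$ be the color-$\pi(v)$ vertex and set $I^{\ast} \DefinedAs (I \setminus \{u\}) \cup \{v\}$. Since $v$ avoids $\bigcup M$, any new $G$-edges in $I^{\ast}$ must run from $v$ to vertices of $I$ whose colors lie outside $J$. If there are none, then either $\pi(v) \in \{\pi(x), \pi(y)\}$, in which case the swap also deletes $xy$ and $I^{\ast}$ becomes a $G$-independent transversal (contradicting the hypothesis on $\pi$), or else we simply replace $I$ by $I^{\ast}$ and iterate. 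Otherwise, pick any such edge $vw \in E(G)$ with $c \DefinedAs \pi(w) \notin J$, adjoin $c$ to $J$ as a new leaf of the tree attached to $\pi(v)$, add $vw$ to $M$, update $I$ to $I^{\ast}$, and continue.

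The main obstacle will be verifying that $M \cup \{vw\}$ remains an induced matching of $G[\pi^{-1}(J \cup \{c\})]$. The vertex $v$ has no $G$-neighbor in $\bigcup M$ by choice; the subtle point is $w$. Since $w$ comes from the part of $I$ indexed by colors outside $J$, the $(G - xy)$-IT invariant on $I$ forbids $G$-edges from $w$ to the other vertices of $I$, but matched vertices in $\bigcup M$ that are not themselves chosen as $I$-representatives (which arise precisely when a color of $J$ has tree-degree at least two) could a priori be $G$-adjacent to $w$. Ruling this out is the heart of the alternating-tree argument: one propagates the ``no-new-edge'' analysis along the unique tree path from $c$ back to the root, invoking a $(G - e)$-independent transversal from the edge-criticality hypothesis for each $e \in M$ encountered along the path, and concludes that either $w$ is $G$-non-adjacent to $\bigcup M$ (giving the desired extension) or a $G$-independent transversal of $\pi$ emerges (giving the desired contradiction). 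Termination is guaranteed because $|J| \leq k$ and because each ``swap without extension'' step strictly decreases a lexicographic potential measuring non-domination, so the process halts with conditions (1) and (2) simultaneously satisfied.
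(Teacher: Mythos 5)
Your proposal and the paper take genuinely different routes, so let me first say what each does. The paper inducts on $k$: after normalizing $\pi(x)=k-1$, $\pi(y)=k$, it forms $H$ by deleting $N(\{x,y\})$ and all edges between the two color classes $X,Y$, merges those two classes into one under a coloring $\zeta$, passes to a minimal spanning subgraph $H'$ of $H$ for which $\zeta$ has no independent transversal, invokes the lemma with $k-1$ colors on an edge $zw$ of $H'$, and then appends $xy$ and the color $k$ to the returned $(J,M)$. You instead try to grow $(J,M,I)$ greedily by swapping representatives and extending the matching. Unfortunately the greedy construction as written has gaps that are not incidental.

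The central difficulty you flag---showing that $M \cup \{vw\}$ stays an induced matching, i.e.\ that $w$ is not $G$-adjacent to any vertex of $\bigcup M$---is not resolved but only gestured at (``one propagates the `no-new-edge' analysis along the unique tree path from $c$ back to the root, invoking a $(G-e)$-independent transversal \dots and concludes \dots''). This is exactly the step that needs an argument: it is not stated what ``propagating along the path'' means, which $(G-e)$-transversal one invokes at each step, or why the process terminates with the dichotomy you assert. The paper sidesteps the issue entirely by the color-merge reduction: the returned $M$ lives in a subgraph of $H$ from which $N(\{x,y\})$ and $E(X,Y)$ have already been deleted, so only a single check (that splitting the merged tree vertex back into $k-1$ and $k$ and inserting the edge $xy$ keeps a tree) is needed.

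There are two further problems. First, your claim that ``any new $G$-edges in $I^\ast$ must run from $v$ to vertices of $I$ whose colors lie outside $J$'' relies on every $I$-representative of a $J$-color lying in $\bigcup M$, but the root color's representative is unconstrained by your stated invariant, so $v$ could be adjacent to it; you would have to argue separately that this either cannot happen or yields an immediate contradiction. Second, termination: in a ``swap without extension'' step you change $I$ but not $M$ or $J$, so the set of vertices of $\pi^{-1}(J)$ not dominated by $\bigcup M$---which depends only on $M$ and $J$---is unchanged, and the same witness $v$ (now its own representative) recurs; your ``lexicographic potential measuring non-domination'' is never defined and has no obvious reason to decrease. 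Moreover if $\pi(v)$ is a non-root color of $J$, swapping $v$ in for $u$ destroys the child-endpoint invariant, since $u$ was required to be a matched vertex. The Haxell--Szab\'o alternating-tree machinery does eventually make an argument of this flavor work, but as written this is a plan rather than a proof, and the paper's recursion on $k$ avoids all of this bookkeeping.
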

\begin{proof}
Suppose the lemma is false and choose a counterexample $G$ with
$\funcsurj{\pi}{V(G)}{\irange{k}}$ so as to minimize $k$.  Let $xy \in E(G)$.
By assumption $\pi$ has a $(G-xy)$-independent transversal $T$.  Note that we
must have $x,y \in T$ lest $T$ be a $G$-independent transversal of $\pi$.

By symmetry we may assume that $\pi(x) = k-1$ and $\pi(y) = k$. Put $X
\DefinedAs \pi^{-1}(k-1)$, $Y \DefinedAs \pi^{-1}(k)$ and $H \DefinedAs G -
N(\set{x, y}) - E(X,Y)$. Define $\func{\zeta}{V(H)}{\irange{k-1}}$ by $\zeta(v)
\DefinedAs \min\set{\pi(v), k-1}$. Note that since $x,y \in T$, we have
$\card{\zeta^{-1}(i)} \geq 1$ for each $i \in \irange{k-2}$.  Put $Z \DefinedAs
\zeta^{-1}(k-1)$. Then $Z \neq \emptyset$ for otherwise $M \DefinedAs \set{xy}$
totally dominates $G[X \cup Y]$ giving a contradiction.

Suppose $\zeta$ has an $H$-independent transversal $S$.  Then we have $z \in S
\cap Z$ and by symmetry we may assume $z \in X$.  But then $S \cup \set{y}$ is
a $G$-independent transversal of $\pi$, a contradiction.

Let $H' \subseteq H$ be a minimal spanning subgraph such that $\zeta$ has no
$H'$-independent transversal.  Now $d(z) \geq 1$ for each $z \in Z$ for
otherwise $T - \set{x,y} \cup \set{z}$ would be an $H'$-independent transversal
of $\zeta$.  Pick $zw \in E(H')$.  By minimality of $k$, we have $J \subseteq
\irange{k-1}$ with $\zeta(z), \zeta(w) \in J$ and an induced matching $M$ of
$H'\brackets{\zeta^{-1}(J)}$ with $zw \in M$ such that
\begin{enumerate}
  \item $\bigcup M$ totally dominates $H'\brackets{\zeta^{-1}(J)}$,
  \item the multigraph with vertex set $J$ and an edge between $a, b \in J$ for
  each $uv \in M$ with $\zeta(u) = a$ and $\zeta(v) = b$ is a (simple) tree.
\end{enumerate}

Put $M' \DefinedAs M \cup \set{xy}$ and $J' \DefinedAs J \cup \set{k}$.
Since $H'$ is a spanning subgraph of $H$, $\bigcup M$ totally dominates
$H\brackets{\zeta^{-1}(J)}$ and hence $\bigcup M'$ totally dominates
$G\brackets{\pi^{-1}(J')}$.  Moreover, the multigraph in (2) for $M'$ and $J'$
is formed by splitting the vertex $k-1 \in J$ in two vertices and adding an edge
between them and hence it is still a tree.  This final contradiction proves the
lemma.
\end{proof}

\section{Acknowledgements}
\noindent Thanks to Dan Cranston for finding an error in my original demonstration of Lemma \ref{MainBKLemma} which led to a simpler proof.

\bibliographystyle{amsplain}
\bibliography{GraphColoring}
\end{document}